\newtheorem{theorem}{Theorem}[section]
\newtheorem{lemma}[theorem]{Lemma}
\theoremstyle{definition}
\newtheorem{problem}[theorem]{Problem}
\numberwithin{equation}{section}
\newcommand{\expm}{\exp^{\ast}}
\newcommand{\nconv}{^{\ast n}}
\newcommand{\Z}{\mathbb{Z}}
\newcommand{\R}{\mathbb{R}}
\newcommand{\C}{\mathbb{C}}
\renewcommand{\Re}{\operatorname{Re}}
\renewcommand{\Im}{\operatorname{Im}}
\newcommand{\I}{\mathrm{i}}
\newcommand{\e}{\mathrm{e}}
\newcommand{\eps}{\varepsilon}
\newcommand{\vphi}{\varphi}
\newcommand{\MP}{\mathcal{P}}
\newcommand{\MN}{\mathcal{N}}
\newcommand{\LHS}{\text{LHS}}
\newcommand{\RHS}{\text{RHS}}
\DeclareMathOperator{\Li}{Li}
\DeclareMathOperator{\li}{li}
\DeclareMathOperator*{\res}{Res}
\DeclareMathOperator{\length}{length}
\DeclareMathOperator{\supp}{supp}
\begin{document}

\title[Optimal Malliavin-type remainder]{The optimal Malliavin-type remainder for Beurling generalized integers}

\author[F. Broucke]{Frederik Broucke}
\thanks{F. Broucke was supported by the Ghent University BOF-grant 01J04017}

\author[G.~Debruyne]{Gregory Debruyne}
\thanks{G.~Debruyne acknowledges support by Postdoctoral Research Fellowships of the Research Foundation--Flanders (grant number
12X9719N) and the Belgian American Educational Foundation. The latter one allowed him to do part of this research at the University of Illinois at Urbana-Champaign.} 

\author[J. Vindas]{Jasson Vindas}
\thanks {J. Vindas was partly supported by Ghent University through the BOF-grant 01J04017 and by the Research Foundation--Flanders through the FWO-grant 1510119N}

\address{Department of Mathematics: Analysis, Logic and Discrete Mathematics\\ Ghent University\\ Krijgslaan 281\\ 9000 Gent\\ Belgium}
\email{fabrouck.broucke@ugent.be}
\email{gregory.debruyne@ugent.be}
\email{jasson.vindas@ugent.be}

\subjclass[2020]{Primary 11M41, 11N80; Secondary 11N05.} 
\keywords{Malliavin-type error terms; generalized integers with large oscillation; prime number theorem; saddle-point method; random prime approximation}

\begin{abstract} We establish the optimal order of Malliavin-type remainders in the asymptotic density approximation formula for Beurling generalized integers. Given $\alpha\in (0,1]$ and $c>0$ (with $c\leq 1$ if $\alpha=1$), a generalized number system is constructed with Riemann prime counting function 
$
\Pi(x)= \Li(x)+ O(x\exp (-c \log^{\alpha} x ) +\log_{2}x),
$
and whose integer counting function satisfies the extremal oscillation estimate $N(x)=\rho x + \Omega_{\pm}(x\exp(- c'(\log x\log_{2} x)^{\frac{\alpha}{\alpha+1}})$
 for any $c'>(c(\alpha+1))^{\frac{1}{\alpha+1}}$, where $\rho>0$ is its asymptotic density.  In particular, this  improves and extends upon the earlier work [Adv. Math. 370  (2020), Article 107240].   \end{abstract}

\maketitle

%%%%%%%%%%%%%%%%%%%%%%%%%%%%%%%%%%%%%%%%%%%%%%%%%%%%%%%%%%%%%%%%%%%%%%%%%%%%%
% INTRODUCTION
%%%%%%%%%%%%%%%%%%%%%%%%%%%%%%%%%%%%%%%%%%%%%%%%%%%%%%%%%%%%%%%%%%%%%%%%%%%%%
\section{Introduction}

In this paper we study the optimality of Malliavin-type remainders in the asymptotic density approximation formula for Beurling generalized integers, a problem that has its roots in a long-standing open question of Bateman and Diamond \cite[13B, p.~199]{BD69}. Let $\mathcal{P}:\:p_{1}\leq p_{2}\leq \dots$ be a Beurling generalized prime system, namely, an unbounded and non-decreasing sequence of positive real numbers satisfying $p_{1}>1$, and let $\mathcal{N}$ be its associated system of generalized integers, that is, the multiplicative semigroup generated by 1 and $\mathcal{P}$   \cite{BD69,Beurling1937,DiamondZhangbook}. We consider the functions $\pi(x)$ and $N(x)$ counting the number of generalized primes and integers, respectively, not exceeding $x$.

Malliavin discovered \cite{Malliavin} that the two asymptotic relations 
\begin{equation}
\label{eq: Malliavin PNT}
\tag{P$_{\alpha}$}
\pi(x)= \Li(x)+ O(x\exp (-c \log^{\alpha} x ))
\end{equation}
and
\begin{equation}
\label{eq: Malliavin density}
\tag{N$_{\beta}$}
N(x)= \rho x+ O(x\exp (-c' \log^{\beta} x )) \qquad (\rho>0),
\end{equation}
for some $c>0$ and $c'>0$, are closely related to each other in the sense that if  \eqref{eq: Malliavin density} holds for a given $0<\beta\leq1$, then (P$_{\alpha^{\ast}}$) is satisfied for some $\alpha^\ast$, and vice versa the relation \eqref{eq: Malliavin PNT} for a given $0<\alpha\leq1$ ensures that (N$_{\beta^{\ast}}$) holds for a certain $\beta^{\ast}$. A natural question is then what the optimal error terms of Malliavin-type are. Writing $\alpha^{\ast}(\beta)$ and $\beta^{\ast}(\alpha)$ for the best possible\footnote{That is, the suprema over all admissible values $\alpha^{\ast}$ and $\beta^{\ast}$ in these implications, respectively.} exponents in these implications, we have:
 
 \begin{problem}\label{Malliavin open problem} Given any $\alpha,\beta\in (0,1]$, find the best exponents $\alpha^{\ast}(\beta)$ and $\beta^{\ast}(\alpha)$. \end{problem}

So far, there are only 
two instances where a solution to Problem \ref{Malliavin open problem} is known. In 2006, Diamond, Montgomery, and Vorhauer \cite{DiamondMontgomeryVorhauer} (cf. \cite{Zhang2007}) demonstrated that $\alpha^{\ast}(1) = 1/2$, while in our recent work [7] we have shown that $\beta^{\ast}(1)=1/2$. The former result proves that the de la Vall\'{e}e Poussin remainder is best possible in Landau's classical PNT \cite{Landau1903}, whereas the latter one yields the optimality of a theorem of Hilberdink and Lapidus \cite{HilberdinkLapidus2006}.

We shall solve here Problem \ref{Malliavin open problem} for any value $\alpha\in (0,1]$. Improving upon Malliavin's results, Diamond \cite{Diamond1970} (cf. \cite{HilberdinkLapidus2006}) established the lower bound $\beta^{\ast}(\alpha)\geq \alpha/(1+\alpha)$. We will prove the reverse inequality:

\begin{theorem}
\label{theorem Malliavin exponent} We have $\beta^{\ast}(\alpha)=\alpha/(1+\alpha)$ for any $\alpha\in (0,1]$.
\end{theorem}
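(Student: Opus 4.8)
The plan is to prove the nontrivial inequality $\beta^{\ast}(\alpha)\leq\alpha/(1+\alpha)$; together with Diamond's bound $\beta^{\ast}(\alpha)\geq\alpha/(1+\alpha)$ \cite{Diamond1970} this gives the theorem. Concretely, for each $\alpha\in(0,1]$ and each $c>0$ (with $c\leq1$ if $\alpha=1$) I would construct a single Beurling generalized prime system for which \eqref{eq: Malliavin PNT} holds while \eqref{eq: Malliavin density} fails for \emph{every} $\beta>\alpha/(1+\alpha)$ at once — namely the sharper example of the abstract: a system with $\Pi(x)=\Li(x)+O(x\exp(-c\log^{\alpha}x)+\log_{2}x)$ and $N(x)=\rho x+\Omega_{\pm}\bigl(x\exp(-c'(\log x\log_{2}x)^{\alpha/(\alpha+1)})\bigr)$ for every $c'>(c(\alpha+1))^{1/(\alpha+1)}$. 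The first estimate implies \eqref{eq: Malliavin PNT} since $\pi(x)=\Pi(x)+O(\sqrt{x})$; and since $(\log x\log_{2}x)^{\alpha/(\alpha+1)}=o(\log^{\beta}x)$ when $\beta>\alpha/(1+\alpha)$, the $\Omega_{\pm}$-bound is incompatible with \eqref{eq: Malliavin density} for all such $\beta$, which is exactly the claim.

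The construction has two stages. Stage one produces a \emph{continuous} model, i.e.\ a nonnegative measure $\mathrm{d}\Pi$ on $[1,\infty)$, encoded through its zeta function $\zeta(s)=\exp\bigl(\int_{1^{-}}^{\infty}x^{-s}\,\mathrm{d}\Pi(x)\bigr)$. I would prescribe $\log\zeta(s)=\log\tfrac{1}{s-1}+G(s)$, where $G$ is built from a carefully chosen family of logarithmic singularities $\{r_{k}\log(1/(s-\rho_{k}))\}$ with $\rho_{k}=1-\eta_{k}+\mathrm{i}\gamma_{k}$ accumulating, as $\gamma_{k}\to\infty$, to the boundary curve $\sigma=1-c_{1}(\log|t|)^{-(1-\alpha)/\alpha}$ of the Malliavin-type zero-free region attached to \eqref{eq: Malliavin PNT} (the exponent $(1-\alpha)/\alpha$ being the one for which the classical correspondence between zero-free regions and prime remainders returns the error exponent $\alpha$). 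On the $\pi$-side these singularities perturb $\Pi$ only additively, $\Pi(x)-\Li(x)=\sum_{k}r_{k}\li(x^{\rho_{k}})+(\text{conjugates})$, so the weights $r_{k}$ and heights $\gamma_{k}$ can be tuned to yield $\Pi(x)=\Li(x)+O(x\exp(-c\log^{\alpha}x))$; positivity $\mathrm{d}\Pi\geq0$ is secured by keeping the oscillatory part of the prime density dominated by $\mathrm{d}\Li(x)=\mathrm{d}x/\log x$ (taking the smallest generalized prime large enough). The decisive point is that on the $N$-side $\zeta$ is the \emph{exponential} of the Dirichlet integral, so the same singular budget that is only linearly constrained in $\Pi-\Li$ can be \emph{concentrated} near the $1$-line so as to make $\zeta$ large there; this asymmetry is what forces $\beta^{\ast}(\alpha)$ strictly below $\alpha$.

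To extract the oscillation of $N$ I would write $N(x)-\rho x=\frac{1}{2\pi\mathrm{i}}\int\bigl(\zeta(s)-\rho/(s-1)\bigr)x^{s}s^{-1}\,\mathrm{d}s$, move the contour onto the boundary curve $\sigma=1-\eta(t)$, and evaluate the resulting integral by the saddle-point method. The modulus of the integrand is governed by $\log|\zeta(1-\eta(t)+\mathrm{i}t)|-\eta(t)\log x$, whose maximiser sits at a height $t_{\ast}=t_{\ast}(x)$ with $\log t_{\ast}$ of the order of a power of $\log x$ — this is where the iterated logarithm enters, through $\log\eta(t_{\ast})$ — and the saddle contributes a main term of size $x\exp\bigl(-c'(\log x\log_{2}x)^{\alpha/(\alpha+1)}\bigr)$; pinning down the admissible size of $\zeta$ on the curve pins the constant to $(c(\alpha+1))^{1/(\alpha+1)}$. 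Choosing a sequence $x_{n}\to\infty$ along which the phase $t_{\ast}(x_{n})\log x_{n}$ runs through $0$ and $\pi$ modulo $2\pi$ — legitimate because $e^{\mathrm{i}t\log x}$ is slowly varying across the width of the saddle — converts this into the two-sided $\Omega_{\pm}$-statement for the continuous model (a Landau-type oscillation argument can be used instead). Stage two is the passage from $\mathrm{d}\Pi$ to a genuine discrete system: following the random prime approximation of Diamond, Montgomery and Vorhauer \cite{DiamondMontgomeryVorhauer} and our earlier work, I would round the continuous prime density to point masses by an independent sampling scheme so that almost surely the discrete $\pi$ (hence $\Pi$) differs from the model's by $O(\log_{2}x)$; this lies far below the scale of the $N$-oscillation and disturbs neither the analytic continuation nor the zero-free region, so for a suitable realisation both \eqref{eq: Malliavin PNT} (now with the $+\log_{2}x$) and the $\Omega_{\pm}$-bound persist.

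The hard part is the first stage together with the saddle-point analysis. The exponent $\alpha/(\alpha+1)$, and more delicately the constant $(c(\alpha+1))^{1/(\alpha+1)}$, emerge only if the singular family is placed at the extremal trade-off: the saddle-point integral that bounds $\Pi-\Li$ (controlled via $\zeta'/\zeta$ on the curve, bounded by Borel--Carath\'eodory) and the one that produces the oscillation of $N-\rho x$ must be optimally balanced against each other, with all error terms in both asymptotic evaluations controlled uniformly over the relevant — very large — ranges of $t$ and $x$. Maintaining $\mathrm{d}\Pi\geq0$ throughout, and ensuring the random discretisation does not wash out the delicate oscillation of $N$, are the remaining technical points.
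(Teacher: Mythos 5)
Your global strategy matches the paper's: show $\beta^{\ast}(\alpha)\le\alpha/(1+\alpha)$ by constructing a single extremal Beurling system satisfying \eqref{eq: Malliavin PNT} together with an $\Omega_{\pm}$-estimate incompatible with \eqref{eq: Malliavin density} for every $\beta>\alpha/(1+\alpha)$, then combine with Diamond's bound $\beta^{\ast}(\alpha)\ge\alpha/(1+\alpha)$; and you correctly split the construction into a continuous stage followed by a probabilistic discretization. However, your proposed \emph{continuous construction} is genuinely different from the paper's, and this difference matters. You suggest building $\log\zeta$ from a family of logarithmic singularities $r_k\log(1/(s-\rho_k))$ placed along a Malliavin zero-free boundary curve; this is essentially the Diamond--Montgomery--Vorhauer/\cite{B-D-V2020} style of example. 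The paper constructs $\psi_C$ directly (equation \eqref{eq: definition psi}) from integrated cosine deviations $R_k$; the resulting $\log\zeta_{C,K}$ has \emph{no} logarithmic singularities at all (the apparent poles of the $\eta_k$ at $s=1\pm\mathrm{i}\tau_k$ are removable by Property~\ref{Property (b)}), and the truncated $\zeta_{C,K}$ is analytic on $\C\setminus\{1\}$. This structural choice is not cosmetic: the paper states explicitly, in the introduction, that a direct generalization of the \cite{B-D-V2020} construction ``would only be able to obtain the $\Omega_{\pm}$-estimate with $c'>2\sqrt{c}$'' when $\alpha=1$, ``which falls short of the actual optimal value $c'=\sqrt{2c}$.'' So your claim that the log-singularity construction ``pins the constant to $(c(\alpha+1))^{1/(\alpha+1)}$'' contradicts what the authors found; that optimal constant is precisely what forced them to devise the new example. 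For Theorem~\ref{theorem Malliavin exponent} itself this gap is not fatal, since only the exponent $\alpha/(\alpha+1)$ is needed and a suboptimal $c'$ still suffices.

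There is a second, more technical gap that you would encounter for $0<\alpha<1$. When the $\rho_k$ accumulate on a curve $\sigma=1-c_1(\log|t|)^{-(1-\alpha)/\alpha}$ that approaches $\sigma=1$ faster than any negative power of $|t|$, the infinite sum defining $G(s)$ will in general not converge to the left of $\sigma=1$, and there is no meromorphic continuation of $\zeta$ there; the paper flags exactly this obstruction and circumvents it with the truncation device $\zeta_{C,K}$ (which, having finitely many deviation terms, \emph{does} continue). Your sketch of moving the contour onto the zero-free boundary curve presupposes a continuation you have not established. Finally, on the discretization: the Diamond--Montgomery--Vorhauer random approximation gives only a $O(\sqrt{x})$ discrepancy between $\pi_D$ and the model, which is adequate when $\alpha<1$ or $\alpha=1,c\le 1/2$; but for $\alpha=1$ and $1/2<c\le1$ the target error $x\exp(-c\log x)=x^{1-c}$ is smaller than $\sqrt{x}$, so the older method fails and the sharper $O(1)$-accurate random approximation of \cite{B-V2021} is required. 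Your proposal, as stated, covers only the easier half of the range of $c$ when $\alpha=1$.
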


Our main result actually supplies more accurate information and, in particular, it exhibits the best possible value of the constant $c'$ in \eqref{eq: Malliavin density}. In order to explain it, let us first state Diamond's result in a refined form, showing the explicit dependency of the constant $c'$ on $c$ and $\alpha$. We write $\log_{k} x$ for the $k$ times iterated logarithm. The Riemann prime counting function of the generalized number system naturally occurs in our considerations\footnote{If $0<\alpha<1$ or if $\alpha=1$ and $c\leq 1/2$, the functions $\Pi$ and $\pi$ are interchangeable in \eqref{eq: Diamond sharp PNT} since $\Pi(x)=\pi(x)+O(x^{1/2})$; otherwise one must work with $\Pi$.}; as in classical number theory, it is defined as $\Pi(x)=\sum_{n=1}^{\infty}\pi(x^{1/n})/n$. We also mention that, for the sake of convenience, we choose to define the logarithmic integral as 
\begin{equation}
\label{normalized Li eq}
\Li (x):= \int_{1}^{x} \frac{1-u^{-1}}{\log u} \dif u.
\end{equation}

\begin{theorem}
\label{th: Diamond's theorem}
Suppose there exist constants $\alpha\in (0,1]$ and $c>0$, with the additional requirement $c\leq 1$ when $\alpha=1$,  such that \begin{equation}
\label{eq: Diamond sharp PNT}
\Pi(x)= \Li(x)+ O(x\exp (-c \log^{\alpha} x )).
\end{equation}
Then, there is a constant $\rho>0$ such that 
\begin{equation}
\label{eq: Diamond sharp}
	N(x) = \rho x + O\biggl\{x\exp\biggl(-(c(\alpha+1))^{\frac{1}{\alpha+1}}(\log x\log_{2} x)^{\frac{\alpha}{\alpha+1}}\biggl(1+O\biggl(\frac{\log_{3}x}{\log_{2} x}\biggr)\biggr)\biggr)\biggr\}.
\end{equation}
\end{theorem}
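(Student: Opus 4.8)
The plan is to transfer the hypothesis from the primes to the integers through the exponential (convolution) formula, and then to read off the density approximation by a saddle-point analysis of the resulting Mellin--Laplace integral. Since $\zeta(s)=\exp\bigl(\int_{1}^{\infty}x^{-s}\,\dif\Pi(x)\bigr)$ and $\int_{1}^{\infty}x^{-s}\,\dif\Li(x)=\log\frac{s}{s-1}$, writing $\Pi=\Li+E$ yields $\zeta(s)=\frac{s}{s-1}H(s)$ with $H(s)=\e^{F(s)}$ and $F(s)=\int_{1}^{\infty}x^{-s}\,\dif E(x)$. On the integer side this says $\dif N=(\delta_{1}+\dif x)\ast\expm(\dif E)$, hence $N(x)=x\int_{1^{-}}^{x}t^{-1}\,\dif M(t)$ with $\dif M=\expm(\dif E)$, and $\rho=H(1)=\int_{1}^{\infty}t^{-1}\,\dif M(t)$. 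Therefore
\[
N(x)-\rho x=-x\int_{x}^{\infty}t^{-1}\,\dif M(t)=-\frac{x}{2\pi\I}\int_{(1)}\frac{H(s)-H(1)}{s-1}\,x^{s-1}\,\dif s ,
\]
the last equality understood after a suitable finite number of smoothings of $N$ (Riesz-type means, which insert extra factors $s^{-m}$ making the integral absolutely convergent): the bare integral is only conditionally convergent, since $H$ is of merely polylogarithmic growth on $\Re s=1$ and $\zeta$ admits no analytic continuation past that line when $\alpha<1$. Everything is then reduced to estimating this integral on the critical line.

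The analytic input is extracted directly from $|E(t)|\ll t\exp(-c\log^{\alpha}t)$ by one integration by parts, which removes the atoms of $\dif\Pi$. This gives, on $\Re s=1$, first the crude size bound $|F(1+\I\tau)|\ll 1+\log_{2}|\tau|$, hence $|H(1+\I\tau)|\ll(1+\log|\tau|)^{O(1)}$ --- the substitute for a zero-free region, needed to legitimise the contour shift --- and second the derivative bounds
\[
\Bigl|\tfrac{\dif^{\,n}}{\dif\tau^{\,n}}F(1+\I\tau)\Bigr|\ll(1+|\tau|)\int_{0}^{\infty}v^{n}\e^{-cv^{\alpha}}\,\dif v=(1+|\tau|)\,\frac{c^{-(n+1)/\alpha}}{\alpha}\,\Gamma\!\Bigl(\tfrac{n+1}{\alpha}\Bigr),
\]
and, via the Fa\`{a} di Bruno formula, comparable bounds for the $\tau$-derivatives of $H(1+\I\tau)$. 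The Gamma factor on the right, together with the elementary evaluation of $\int_{0}^{\infty}v^{n}\e^{-cv^{\alpha}}\dif v$, is precisely where the constant $c(\alpha+1)$ and the exponent $\alpha/(\alpha+1)$ will come from.

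With these in hand, the core of the argument is a saddle-point estimate of the line integral: integrate by parts $n$ times in $\tau$, balancing the gain $(\log x)^{-n}$ against the $\Gamma((n+1)/\alpha)$-type loss, while at the same time choosing the truncation height in $\tau$ and the number of smoothings. Inserting Stirling's formula and performing the ensuing Legendre-type minimisation over these parameters produces, for the smoothed counting function, the bound with exponent $(c(\alpha+1))^{1/(\alpha+1)}(\log x\log_{2}x)^{\alpha/(\alpha+1)}\bigl(1+O(\log_{3}x/\log_{2}x)\bigr)$. It remains to de-smooth: one recovers the estimate for $N(x)-\rho x$ itself from the smoothed ones using the monotonicity of $N$ together with $m$-fold divided differences at an optimally chosen spacing, and one checks that the resulting loss is absorbed into the secondary term $O(\log_{3}x/\log_{2}x)$.

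The main obstacle, I expect, is the simultaneous optimisation that produces the \emph{sharp} constant $(c(\alpha+1))^{1/(\alpha+1)}$: the number of integrations by parts, the truncation height, the number of smoothings and the de-smoothing spacing all enter the same balance, and only their precise interplay yields both the correct power $(\log x\log_{2}x)^{\alpha/(\alpha+1)}$ and the optimal leading coefficient, so that a lossy treatment of any single one of them spoils the constant. A subsidiary difficulty is the size bound for $H$ on $\Re s=1$: lacking a functional equation one cannot invoke a zero-free region in the classical sense and must instead squeeze the polylogarithmic bound out of the error estimate for $E$ alone --- this is also the place where the extra $\log_{2}x$ present in the companion construction is mirrored.
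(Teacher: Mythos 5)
The paper does not actually prove Theorem~1.3 in detail; it records that a proof can be extracted from~\cite[Theorem~A.1]{B-D-V2020} and~\cite{Balazard1999}, i.e.\ from the identity $\dif N=\expm(\dif\Pi)$ and an elementary Dirichlet-hyperbola estimation of the convolution powers $(\dif\Pi)^{\ast n}$. Your plan is genuinely different in organisation: you pass to the Mellin side, write $\zeta(s)=\frac{s}{s-1}H(s)$ with $H=\e^{F}$, and attack $N(x)-\rho x$ by repeated integration by parts of a (smoothed) Perron integral on $\sigma=1$, using Gevrey-type bounds for the derivatives of $F$. This is a legitimate alternative route and the ingredients you list (the $\log_2$-size bound for $H$ on $\sigma=1$, the $\Gamma\bigl((n+1)/\alpha\bigr)c^{-(n+1)/\alpha}$ scale for derivatives, Fa\`a di Bruno, smoothing/de-smoothing) are all the right objects.

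However there is a genuine gap in the analytic input, and it is exactly at the step you flag as the ``main obstacle.'' The derivative bound you state,
\[
\Bigl|\tfrac{\dif^{\,n}}{\dif\tau^{\,n}}F(1+\I\tau)\Bigr|\ll(1+|\tau|)\,\frac{c^{-(n+1)/\alpha}}{\alpha}\,\Gamma\!\Bigl(\tfrac{n+1}{\alpha}\Bigr),
\]
obtained by a single integration by parts against $E$, is true but is \emph{not the estimate that produces Malliavin's exponent}. If you feed this bound into Fa\`a di Bruno and into the Legendre-type minimisation over $n$, the truncation height and the number of smoothings, you are led --- after balancing the smoothed contour integral against the de-smoothing loss --- to a remainder of the shape $\exp\!\bigl(-\kappa(\log x)^{\alpha}\bigr)$, i.e.\ with the power $(\log x)^{\alpha}$ and \emph{no} $\log_2x$; that is strictly smaller than the right-hand side of~\eqref{eq: Diamond sharp} and is therefore false, since it contradicts the paper's $\Omega_{\pm}$ Theorem~\ref{th: optimality Diamond's theorem}. (Trying to dodge this by noting that at large $|\tau|$ the dominant Bell-polynomial term is no longer the single block and the factor $(1+|\tau|)^{m}$ takes over produces instead an estimate of order $1/\log x$, which is far too weak; neither regime of the crude bound gives the theorem.)

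What is actually needed is the same refinement that you already use, implicitly, to get the $n=0$ size bound $|F(1+\I\tau)|\ll1+\log_2|\tau|$: split the Mellin integral at a height $X$, bound the near part $\int_{1}^{X}$ by a Mertens-type estimate $\int_1^X(\log t)^n t^{-1}(\dif\Pi+\dif\Li)\ll(\log X)^{n}/n$, and only apply $|E(t)|\ll t\e^{-c\log^{\alpha}t}$ on the far part $\int_{X}^{\infty}$, then optimise over $X$ \emph{for each $n$ and $\tau$}. This replaces $(1+|\tau|)\Gamma\bigl((n+1)/\alpha\bigr)c^{-(n+1)/\alpha}$ at large $|\tau|$ by something of size $\asymp (\log|\tau|)^{n/\alpha}c^{-n/\alpha}\log_2|\tau|/n$, which is qualitatively different. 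Carrying this refined bound through Fa\`a di Bruno (where the dominant partition is now genuinely $\tau$- and $n$-dependent) and through the optimisation is what simultaneously brings in the $\log_2x$, changes $\alpha$ into $\alpha/(\alpha+1)$, and produces the constant $(c(\alpha+1))^{1/(\alpha+1)}$. Your sketch only states the $(1+|\tau|)$-bound for the derivatives and treats the size bound for $H$ as a separate ``zero-free-region substitute''; without the refined \emph{derivative} bounds the minimisation you describe cannot reach~\eqref{eq: Diamond sharp}. Relatedly, the claim that the de-smoothing loss ``is absorbed into the secondary term $O(\log_3x/\log_2x)$'' is misleading: at the optimal choice of parameters the smoothing width $h$ is chosen so that the Perron integral and the de-smoothing loss $O(xh)$ are of the \emph{same} exponential order, i.e.\ the de-smoothing is responsible for the main term, not a lower-order correction.
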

A proof of Theorem \ref{th: Diamond's theorem} can be given as in  \cite[Theorem A.1]{B-D-V2020} (cf. \cite{Balazard1999}), starting from the identity \cite{DiamondZhangbook}
\begin{equation}
\label{eq: def number system}
	\dif N = \expm(\dif \Pi) = \sum_{n=0}^{\infty}\frac{1}{n!}(\dif\Pi)\nconv
\end{equation}
and using a version of the Dirichlet hyperbola method to estimate the convolution powers $(\dif\Pi)\nconv$. The current article is devoted to showing the optimality of Theorem \ref{th: Diamond's theorem}, including the optimality of the constant $c' = (c(\alpha+1))^{1/(\alpha+1)}$ in the asymptotic estimate \eqref{eq: Diamond sharp}, as established by the next theorem. Note that Theorem \ref{theorem Malliavin exponent} follows  at once upon combining Theorem \ref{th: Diamond's theorem} and Theorem \ref{th: optimality Diamond's theorem}.
\begin{theorem}
\label{th: optimality Diamond's theorem}

Let $\alpha$ and $c$ be constants such that $\alpha\in (0,1]$ and $c>0$, where we additionally require $c\le 1$ if $\alpha=1$. Then there exists a Beurling generalized number system such that

\begin{equation}
\label{eq: asymptotics Pi}
\Pi(x) - \Li(x) \ll \begin{dcases}												x\exp(-c(\log x)^{\alpha})	&\mbox{if $\alpha<1$ or $\alpha=1$ and $c<1$,} \\
													\log_{2}x 				&\mbox{if $\alpha=c=1$,}
												\end{dcases} 
\end{equation}
and
\begin{equation}
\label{eq: oscillation N}
	N(x) = \rho x + \Omega_{\pm}\biggl\{x\exp\biggl(-(c(\alpha+1))^{\frac{1}{\alpha+1}}(\log x\log_{2} x)^{\frac{\alpha}{\alpha+1}}\biggl(1 + b\frac{\log_{3} x}{\log_{2} x}\biggr)\biggr)\biggr\},
\end{equation}
where $\rho>0$ is the asymptotic density of $N$ and $b$ is some positive constant\footnote{Our example shows that we may select any $b > \alpha/(\alpha+1)$.}.
\end{theorem}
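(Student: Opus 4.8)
The plan is to construct the generalized prime system explicitly by specifying its Riemann prime counting function $\Pi$ through a carefully chosen perturbation of the classical one, and then to analyze the resulting $N$ via its Mellin transform / the associated zeta function $\zeta(s) = \exp(\int_1^\infty x^{-s}\,\dif\Pi(x))$. Concretely, one wants to take $\dif\Pi = \dif\Pi_0 + \dif\mu$, where $\Pi_0$ is (essentially) $\li$ — or a genuine prime-like measure approximating it — and $\dif\mu$ is a signed measure supported away from small $x$ with total variation controlled so that \eqref{eq: asymptotics Pi} holds, while the Mellin transform of $\dif\mu$ creates a single pair of ``zeros'' (or rather a pole-like singularity of $1/\zeta$, equivalently a zero of $\zeta$) at a prescribed height. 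The location of this zero must be chosen to be the saddle point of the relevant integral: if the error in $\Pi$ is at the level $x\exp(-c\log^\alpha x)$, then the zero should sit at real part $1 - \kappa(t)$ with $\kappa(t)\asymp (\log t)^{-\alpha/(1-\alpha)}$-type behavior — the standard zero-free-region shape dual to a Malliavin PNT error — placed at a height $T$ to be optimized. One then writes $N(x) - \rho x$ as an inverse Mellin integral and extracts the contribution of this conjugate pair of zeros, which produces an oscillating term of size $x^{\beta(T)}$ times $\cos(T\log x + \text{phase})$; optimizing the trade-off between $\beta(T)$ (the real part of the zero) and the admissible error in $\Pi$ gives precisely $\beta = 1 - (c(\alpha+1))^{1/(\alpha+1)}(\log_2 x/\log x)^{\cdots}$, i.e. the exponent in \eqref{eq: oscillation N}.

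The detailed steps I would carry out are as follows. First, following the framework of random Beurling systems (as in the companion work and in work of Diamond--Montgomery--Vorhauer and Zhang), I would not prescribe $\Pi$ deterministically but rather realize the ``continuous'' measure $\dif\Pi$ as the expected prime-counting density of a random prime system, then invoke a probabilistic existence result to pass to an actual discrete Beurling prime system whose $\Pi$ differs from the target by $O(\log_2 x)$ or smaller — this is what produces the clean $\log_2 x$ error in the case $\alpha=c=1$ and absorbs the discretization cost in the other cases into the $x\exp(-c\log^\alpha x)$ term. Second, I would compute $\log\zeta(s)$ from $\dif\Pi$ and design the perturbation so that $\zeta$ has a zero at $s = \beta_0 + \I T_0$ (and its conjugate) with $\beta_0 = 1 - c'(\log T_0 \log_2 T_0 / \text{something})$ chosen to make the saddle-point estimate sharp; the perturbation of $\Pi$ needed to force such a zero is roughly $\dif\mu \approx -2\cos(T_0 \log x)\,x^{\beta_0 - 1}\cdot(\text{smooth cutoff})\,\dif x/\log x$, whose contribution to $\Pi(x) - \li(x)$ is $O(x^{\beta_0}/\log x)$, and one checks this is dominated by the right-hand side of \eqref{eq: asymptotics Pi}. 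Third, and this is the analytic heart, I would estimate $N(x) = \frac{1}{2\pi\I}\int \zeta(s) x^s \dif s/s$ by the saddle-point method: the main term $\rho x$ comes from the pole at $s=1$, and the oscillation comes from deforming the contour near $s = \beta_0 \pm \I T_0$, where the integrand behaves like $\exp(\text{something})$ with a genuine saddle because $\log\zeta$ has a logarithmic singularity there; balancing $x^{\beta_0}$ against the width of the saddle and choosing $T_0 = T_0(x)$ to maximize the resulting lower bound yields the $\Omega_\pm$ estimate with the stated constant $(c(\alpha+1))^{1/(\alpha+1)}$ and the secondary term $b\log_3 x/\log_2 x$ with $b$ any constant exceeding $\alpha/(\alpha+1)$.

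The main obstacle, as I see it, is twofold. The first difficulty is \emph{quantitative control of the discretization}: turning the continuous target measure $\dif\Pi$ into an honest Beurling prime system (a nondecreasing step function with unit jumps at points $>1$) while keeping the error at the level $\log_2 x$ requires a delicate second-moment / concentration argument for the random model, together with a verification that the random fluctuations do not swamp the deliberately-inserted oscillation at height $T_0$ — one needs the designed zero to survive the randomization. The second difficulty is making the \emph{saddle-point analysis uniform and two-sided}: obtaining an $\Omega_\pm$ (not merely $\Omega$) result means one must show the oscillatory term genuinely changes sign with the expected amplitude infinitely often, which requires controlling the phase $T_0(x)\log x$ and showing the ``error'' contributions from the rest of the contour (other putative zeros, the tail of the integral, the contribution of higher convolution powers in $\dif N = \expm(\dif\Pi)$) are of strictly smaller order than $x^{\beta_0(x)}$ for a suitable sequence $x \to \infty$. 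Carefully optimizing the free parameter $T_0$ as a function of $x$ so that the exponent in \eqref{eq: oscillation N} comes out with the precise constant, including the correct coefficient $b$ of the $\log_3 x/\log_2 x$ correction, is where the bulk of the technical computation will lie; I would expect to set $\log T_0 \asymp (\log x/\log_2 x)^{1/(\alpha+1)}$ up to lower-order factors and then expand the resulting exponent to two orders.
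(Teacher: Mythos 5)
Your outline has the right overall skeleton (perturb a continuous $\Pi$ by a cosine term supported at controlled scales, analyze $N$ via Perron's formula and a saddle-point argument near $s = \beta_0 + \I T_0$, then discretize probabilistically), but the central analytic step is built around the wrong feature of $\zeta$. You propose to design the perturbation of $\dif\Pi$ so that $\zeta$ acquires a \emph{zero} at $\beta_0 + \I T_0$, and you claim the oscillation in $N$ is then extracted from the logarithmic singularity of $\log\zeta$ there. That is the mechanism for producing oscillation in $\psi$ or $\Pi$ (where the Perron integrand is $-\zeta'/\zeta$, which has poles at zeros of $\zeta$ --- this is the Diamond--Montgomery--Vorhauer construction for $\alpha^{\ast}(1)$), but it does nothing for $N$. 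The Perron integral $N(x) = \frac{1}{2\pi\I}\int\zeta(s)x^{s}\,\dif s/s$ involves $\zeta$ itself, not $\log\zeta$ or $\zeta'/\zeta$; at a zero of $\zeta$ the integrand is \emph{small} and perfectly analytic, so deforming the contour through a neighborhood of it picks up nothing. To make $N$ oscillate you must instead make $\zeta$ \emph{large} off the real axis. The paper's cosine perturbation is calibrated in sign so that $\int_{s}^{\infty}\eta_{K}(z)\,\dif z = \frac{1}{4}\int_{1/2}^{1}B_{K}^{(1+\I\tau_{K}-s)u}\,\dif u/u$ is large and \emph{positive} for $s=\sigma+\I\tau_{K}$ with $\sigma$ slightly below $1$, i.e.\ $\zeta$ has a \emph{peak} there; the saddle of $f(s)=s\log x + \int_{s}^{\infty}\eta_{K}(z)\,\dif z$ arises from balancing the decay of $x^{s}$ against the growth of this peak, with no singularity of $\log\zeta$ involved at all (the apparent pole of $\eta_{K}$ at $1+\I\tau_{K}$ is removed by the arithmetic condition $\tau_{K}\log A_{K},\tau_{K}\log B_{K}\in2\pi\Z$). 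Your $\dif\mu$, with its minus sign, points toward a trough/zero, and the saddle-point analysis you sketch would produce an upper bound on the contribution rather than the needed lower bound.

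A second gap: for $\alpha<1$ the zeta function of the continuous example generally does \emph{not} admit meromorphic continuation past $\sigma=1$, so the contour shift you describe has, a priori, nowhere to go. The paper handles this by localizing the deviations to disjoint intervals $[A_{k},B_{k}]$ and, when estimating $N(x)$ for $x<A_{K+1}$, replacing $\zeta_{C}$ by the zeta function $\zeta_{C,K}$ of the truncated system with only the first $K+1$ deviations, which \emph{is} analytic on $\C\setminus\{1\}$ and agrees with the full system in the relevant range. Without some such truncation device (or a substitute), the saddle-point method cannot be set up for $0<\alpha<1$. Finally, the $\Omega_{\pm}$ (two-sided) conclusion requires not a single optimized height $T_{0}(x)$ but a sequence of deliberately placed heights $\tau_{k}$ together with arithmetic conditions (Property (c) in the paper) forcing $\tau_{k}\log x_{k}$ to alternate between $\pi/2$ and $3\pi/2$ mod $2\pi$; simply optimizing $T_{0}$ as a continuous function of $x$ does not by itself control the sign.
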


The proof of Theorem \ref{th: optimality Diamond's theorem} consists of two main steps. We shall first construct an explicit example of a continuous analog \cite{Beurling1937,DiamondZhangbook} of a number system fulfilling all requirements from Theorem \ref{th: optimality Diamond's theorem}, and then we will discretize it by means of a probabilistic procedure. The second step will be accomplished in Section \ref{sec: Discretization} with the aid of a recently improved version \cite{B-V2021} of the Diamond-Montgomery-Vorhauer-Zhang random prime approximation method \cite{DiamondMontgomeryVorhauer,Zhang2007}.
The construction and analysis of the continuous example will be carried out in Sections \ref{section: construction continuous example}--\ref{Section conclusion continuous example}. 

Our method is in the same spirit as in \cite{B-D-V2020}, particularly making extensive use of saddle point analysis. Nevertheless, it is worthwhile to point out that showing Theorem \ref{th: optimality Diamond's theorem} requires devising a new example. Even in the case $\alpha=1$ our treatment here delivers novel important information that cannot be reached with the earlier construction. Direct generalizations of the example from \cite{B-D-V2020} are unable to reveal the optimal constant $c'$ in the remainder $O(x\exp(-c' (\log x \log_2 x)^{\alpha/(\alpha+1)}(1+o(1))))$ of \eqref{eq: Diamond sharp}. In fact, upon sharpening the technique from \cite{B-D-V2020} when $\alpha=1$, one would only be able to obtain the $\Omega_{\pm}$-estimate  with $c'>2\sqrt{c}$, which falls short of the actual optimal value $c'=\sqrt{2c}$ that we establish with our new construction. Furthermore, we deal here with the general case $0<\alpha\leq 1$. There is a notable difference between generalized number systems satisfying \eqref{eq: asymptotics Pi} with $\alpha=1$ and those  satisfying it with $0<\alpha<1$. In the latter case, the zeta function admits, in general, no meromorphic continuation\footnote{The asymptotic estimate \eqref{eq: asymptotics Pi} only ensures that, after subtraction of a simple pole-like term, the corresponding zeta function has a boundary value function on $\sigma=1$ that belongs to a non-quasianalytic Gevrey class. } beyond the line $\sigma=1$, which a priori renders direct use of complex analysis arguments impossible. We will overcome this difficulty with a truncation idea, where the analyzed continuous number system is approximated by a sequence of continuous number systems having very regular zeta functions in the sense that they are actually analytic on $\mathbb{C}\setminus\{1\}$.

We conclude this introduction by mentioning that determining the best exponent $\alpha^{\ast}(\beta)$ from Problem \ref{Malliavin open problem} remains wide open for $0<\beta<1$. Bateman and Diamond have conjectured that $\alpha^{\ast}(\beta)=\beta/(\beta+1)$. The validity of this conjecture has only been verified \cite{DiamondMontgomeryVorhauer} for $\beta=1$. It has recently been shown \cite{Broucke2021} that $\alpha^{\ast}(\beta)\leq \beta/(\beta+1)$. However, the best known admissible value \cite[Theorem 16.8, p. 187]{DiamondZhangbook} when $0<\beta<1$ is $\alpha^{*}\approx\beta/(\beta+ 6.91)$, which is still far from the conjectural exponent. 
%%%%%%%%%%%%%%%%%%%%%%%%%%%%%%%%%%%%%%%%%%%%%%%%%%%%%%%%%%%%%%%%%%%%%%%%%%%%%
% SETUP OF CONTINUOUS EXAMPLE
%%%%%%%%%%%%%%%%%%%%%%%%%%%%%%%%%%%%%%%%%%%%%%%%%%%%%%%%%%%%%%%%%%%%%%%%%%%%%
\section{Construction of the continuous example}
\label{section: construction continuous example}
We explain here the setup for the construction of our continuous example, whose analysis shall be the subject of Sections \ref{sec: The contribution from the saddle points}--\ref{Section conclusion continuous example}. Let us first clarify what is meant by a not necessarily discrete generalized number system. In a broader sense \cite{Beurling1937,DiamondZhangbook}, a Beurling generalized number system is merely a pair of non-decreasing right continuous functions $(\Pi,N)$ with $\Pi(1)=0$ and $N(1)=1$, both having support in $[1,\infty)$, and subject to the relation \eqref{eq: def number system}, where the exponential is taken with respect to  the (multiplicative) convolution of measures \cite{DiamondZhangbook}. Since our hypotheses always guarantee convergence of the Mellin transforms, the latter becomes equivalent to the zeta function identity  
\[
\zeta(s) :=\int^{\infty}_{1^{-}} x^{-s}\mathrm{d}N(x)= \exp\left(\int^{\infty}_{1}x^{-s}\mathrm{d}\Pi(x)\right).
\]

We define our continuous Beurling system 
via its Chebyshev 
function $\psi_{C}$. This uniquely defines $\Pi_{C}$ and $N_{C}$ by means of the relations $\dif\Pi_{C}(u) = (1/\log u)\dif\psi_{C}(u)$ and $\dif N_{C} = \expm(\dif\Pi_{C})$. For $x\ge1$, set 
\begin{equation}
\label{eq: definition psi}
	\psi_{C}(x) = x - 1 - \log x + \sum_{k=0}^{\infty}(R_{k}(x) + S_{k}(x)).
\end{equation}
Here
$x-1-\log x = \int_{1}^{x}\log u \dif\Li(u)$ is the main term (cf. \eqref{normalized Li eq}), the terms
 $R_{k}$ are the deviations which will create a large oscillation in the integers, while the
 $S_{k}$ are introduced to mitigate the jump discontinuity of $R_{k}$ and make $\psi_{C}$ absolutely continuous. The effect of the terms $S_{k}$ on the asymptotics of $N_{C}$ will be harmless. Concretely, we consider fast growing sequences $(A_{k})_{k}$, $(B_{k})_{k}$, $(C_{k})_{k}$, and $(\tau_{k})_{k}$ with $A_{k}<B_{k}<C_{k}<A_{k+1}$, and define\footnote{The factor $1/2$ in the definitions of the functions $R_{k}$ and $S_{k}$ shall be needed to carry out the discretization procedure in the case  $\alpha = 1$ and $c > 1/2$, cf. Lemma \ref{lem: pi_{C}}.}
\begin{align*}
	R_{k}(x)	&= \begin{dcases}
					\mathrlap{\frac{1}{2}\int_{A_{k}}^{x}(1-u^{-1})\cos(\tau_{k}\log u)\dif u}
					\phantom{R_{k}(B_{k}) + \frac{1}{2}\bigl(B_{k}-1-\log B_{k} - (x-1-\log x)\bigr)}	 	&\mbox{for } A_{k}\le x \le B_{k}, \\
					0																&\mbox{otherwise;}
			\end{dcases}\\
	S_{k}(x)	&= \begin{cases}
					R_{k}(B_{k}) + \frac{1}{2}\bigl(B_{k}-1-\log B_{k} - (x-1-\log x)\bigr)		&\mbox{for } B_{k} < x < C_{k},\\
					0														&\mbox{otherwise.}
			\end{cases}	
\end{align*}
We require that $\tau_{k}\log A_{k}, \tau_{k}\log B_{k} \in 2\pi\Z$ and define $C_{k}$ as the unique solution of $R_{k}(B_{k}) + (1/2)\bigl(B_{k}-1-\log B_{k} - (C_{k}-1-\log C_{k})\bigr)=  0$. Notice that for $A_{k}\le x \le B_{k}$,
\begin{align*}	
	R_{k}(x) 		&= \frac{\tau_{k}^{2}}{2(\tau_{k}^{2}+1)}\biggl(\frac{x}{\tau_{k}}\sin(\tau_{k}\log x) + \frac{x}{\tau_{k}^{2}}\cos(\tau_{k} \log x) - \frac{A_{k}}{\tau_{k}^{2}}\biggr) - \frac{\sin(\tau_{k}\log x)}{2\tau_{k}}, \\
	R_{k}(B_{k})	&= \frac{B_{k}-A_{k}}{2(\tau_{k}^{2}+1)} > 0,
\end{align*}
so the definition of $C_{k}$ makes sense (i.e.\ $C_{k}>B_{k}$). We will also set $A_{k}=\sqrt{B_{k}}$ and 
\begin{equation}
\label{eq: def tau}
	\tau_{k} = \exp\bigl(c(\log B_{k})^{\alpha}\bigr),
\end{equation}
then 
\begin{equation}
\label{eq: approx C}
	C_{k} = B_{k}\bigl(1+O(\exp(-2c(\log B_{k})^{\alpha}))\bigr).
\end{equation}
With these definitions in place, we have that $\psi_{C}$ is absolutely continuous, non-decreasing\textcolor{red}{,} and satisfies $\psi_{C}(x) = x + O\bigl(x\exp(-c(\log x)^{\alpha})\bigr)$, which implies that\footnote{When $\alpha=c=1$, the stronger asymptotic estimate $\Pi_{C}(x)=\Li(x)+O(1)$ holds.} \eqref{eq: asymptotics Pi} holds for $\Pi_{C}(x) = \int_{1}^{x}(1/\log u)\dif\psi_{C}(u)$. Finally we define a sequence $(x_{k})_{k}$ via the relation
\begin{equation}
\label{eq: def x}
	\log B_{k} = (c(\alpha+1))^{\frac{-1}{\alpha+1}}(\log x_{k}\log_{2} x_{k})^{\frac{1}{\alpha+1}} + \eps_{k}.
\end{equation}
Here $(\eps_{k})_{k}$ is a bounded sequence which is introduced to control the value of $\tau_{k}\log x_{k}$ mod $2\pi$ (this will be needed later on).
It is on the sequence $(x_{k})_{k}$ that we will show the oscillation estimate \eqref{eq: oscillation N}.

We collect all technical requirements of the considered sequences in the following lemma. The rapid growth of the sequence $(B_{k})_{k}$ will be formulated as a general inequality $B_{k+1}>\max\{F(B_{k}), G(k)\}$, for some functions $F$ and $G$. We will not specify here what $F$ and $G$ we require. At each point later on where the rapid growth is used, it will be clear what kind of growth (and what $F$, $G$) is needed.
\begin{lemma}
Let $F$, $G$ be increasing functions. There exist sequences $(B_{k})_{k}$ and $(\eps_{k})_{k}$ such that, with the definitions of $(A_{k})_{k}$, $(C_{k})_{k}$, $(\tau_{k})_{k}$, and $(x_{k})_{k}$ as above, the following properties hold:
\begin{enumerate}[label = (\alph*)]
	\item $B_{k+1} > \max\{F(B_{k}), G(k)\}$; \label{Property (a)}
	\item $\tau_{k}\log A_{k}\in 2\pi\Z$ and $\tau_{k}\log B_{k} \in 2\pi\Z$; \label{Property (b)}
	\item $\tau_{k}\log x_{k} \in \pi/2 + 2\pi\Z$ when $k$ is even, and $\tau_{k}\log x_{k} \in 3\pi/2 + 2\pi\Z$ when $k$ is odd;\label{Property (c)}
	\item $(\eps_{k})_{k}$ is a bounded sequence. \label{Property (d)}
\end{enumerate}
\end{lemma}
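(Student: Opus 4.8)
The plan is to build the two sequences \emph{greedily}, one index at a time, choosing $B_{k}$ first and $\eps_{k}$ afterwards. The benefit of this ordering is that once $B_{k}$ is fixed the number $\tau_{k}=\exp(c(\log B_{k})^{\alpha})$ is determined, so Properties~\ref{Property (a)} and~\ref{Property (b)} become conditions on $B_{k}$ alone, whereas Property~\ref{Property (c)} turns into a one-dimensional problem in the free parameter $\eps_{k}$, solvable by the intermediate value theorem. Both steps rest on the elementary fact that the explicit functions involved have derivatives growing at least like $\exp(c(\log B_{k})^{\alpha})$, so that over a bounded range of their argument they sweep out an interval of length far exceeding $2\pi$ and hence meet every prescribed residue class modulo $2\pi$.

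\emph{Step 1: choice of the $B_{k}$.} Here we secure Properties~\ref{Property (a)} and~\ref{Property (b)}. Put $L=\log B_{k}$ and $g(L)=L\exp(cL^{\alpha})=\tau_{k}\log B_{k}$; since $A_{k}=\sqrt{B_{k}}$ we have $\tau_{k}\log A_{k}=g(L)/2$, so it is enough to arrange $g(L)\in4\pi\Z$, for then $\tau_{k}\log B_{k}=g(L)\in2\pi\Z$ and $\tau_{k}\log A_{k}=g(L)/2\in2\pi\Z$. Now $g$ is continuous and strictly increasing on $(0,\infty)$ with $g'(L)=\exp(cL^{\alpha})(1+c\alpha L^{\alpha})\ge\exp(cL^{\alpha})$, so whenever $M$ exceeds a threshold depending only on $c,\alpha$ the image $g([M,M+1])$ is an interval of length at least $\exp(cM^{\alpha})>4\pi$ and therefore contains a point of $4\pi\Z$. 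Proceeding recursively (the base case being identical), given $B_{0},\dots,B_{k-1}$ we pick $M_{k}$ larger than a quantity depending on $B_{k-1},k,F,G,c,\alpha$ — large enough for the preceding remark, for $M_{k}>\max\{\log F(B_{k-1}),\log G(k)\}$, for the estimate of Step 2 below, and (choosing $F$ suitably) for the ordering $C_{k-1}<A_{k}$ — then select $L_{k}\in[M_{k},M_{k}+1]$ with $g(L_{k})\in4\pi\Z$ and set $B_{k}=\exp(L_{k})$. This secures Properties~\ref{Property (a)} and~\ref{Property (b)}.

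\emph{Step 2: choice of the $\eps_{k}$.} This gives Properties~\ref{Property (c)} and~\ref{Property (d)}. Fix $k$; now $L_{k}=\log B_{k}$ and $\tau_{k}$ are frozen. For $\eps\in[0,1]$, relation \eqref{eq: def x} with $\eps_{k}=\eps$ reads $y\log y=c(\alpha+1)(L_{k}-\eps)^{\alpha+1}$, where $y=\log x_{k}$; since $t\mapsto t\log t$ increases from $0$ to $\infty$ on $[1,\infty)$ and the right-hand side is a large positive number, this defines $y=y(\eps)>1$ uniquely and smoothly. Implicit differentiation gives $y'(\eps)=-c(\alpha+1)^{2}(L_{k}-\eps)^{\alpha}/(\log y(\eps)+1)<0$, so $h(\eps):=\tau_{k}y(\eps)=\tau_{k}\log x_{k}$ is continuous and strictly decreasing on $[0,1]$, with
\[
	h(0)-h(1)=\tau_{k}\bigl(y(0)-y(1)\bigr)\ge\tau_{k}\,\frac{c(\alpha+1)^{2}(L_{k}-1)^{\alpha}}{\log y(0)+1}.
\]
From $y(0)\le y(0)\log y(0)=c(\alpha+1)L_{k}^{\alpha+1}$ (valid once $L_{k}$ is large) one gets $\log y(0)=O(\log L_{k})$, hence $h(0)-h(1)\gg\exp(cL_{k}^{\alpha})L_{k}^{\alpha}/\log L_{k}\to\infty$ as $L_{k}\to\infty$. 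Having taken $M_{k}$, and so $L_{k}$, large enough in Step 1, we may assume $h(0)-h(1)>2\pi$; since $h$ is continuous and strictly monotone on $[0,1]$, its image is a closed interval of length exceeding $2\pi$ and thus contains a point of $\pi/2+2\pi\Z$ if $k$ is even and of $3\pi/2+2\pi\Z$ if $k$ is odd. Taking $\eps_{k}\in[0,1]$ to be such a point yields Properties~\ref{Property (c)} and~\ref{Property (d)}.

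The difficulty here is organizational rather than analytic: one has to realize that fixing $B_{k}$ (hence $\tau_{k}$) before choosing $\eps_{k}$ decouples the congruence conditions in~\ref{Property (b)} from the residue conditions in~\ref{Property (c)}, and one has to check that $\frac{\dif}{\dif\eps_{k}}(\tau_{k}\log x_{k})$ genuinely tends to infinity — which it does because the exponential factor $\tau_{k}=\exp(c(\log B_{k})^{\alpha})$ dwarfs the merely logarithmic denominator $\log\log x_{k}$. Once these are in place, the construction is just a recursion together with the intermediate value theorem, and the latitude in the rapid-growth hypothesis~\ref{Property (a)} comfortably absorbs all the ``sufficiently large'' thresholds that appear along the way.
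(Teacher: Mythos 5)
Your proof is correct and takes essentially the same approach as the paper: both construct $(B_k)_k$ and $(\eps_k)_k$ inductively, first choosing $B_k$ large with $\tau_k\log B_k = L_k\e^{cL_k^{\alpha}} \in 4\pi\Z$ to secure Properties~\ref{Property (a)} and~\ref{Property (b)}, then tuning $\eps_k$ by an intermediate-value argument, exploiting that the derivative of $\tau_k\log x_k$ with respect to $\eps_k$ has magnitude $\asymp \tau_k (\log B_k)^{\alpha}/\log_2 B_k \to\infty$, so the image over a bounded $\eps_k$-range sweeps out far more than $2\pi$. Your write-up merely makes explicit the implicit-differentiation and monotonicity details that the paper compresses into the single asymptotic $\tau_0\log x_0 - \tau_0\log y_0 \asymp -\eps_0\tau_0(\log B_0)^{\alpha}/\log_2 B_0$.
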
 
\begin{proof}
We define the sequences inductively. Consider the function $f(u) = u\e^{cu^{\alpha}}$. 
Let $B_{0}$ be some (large) number with $f(\log B_{0})\in4\pi\Z$, so that \ref{Property (b)} is satisfied with $k=0$. Define $y_{0}$ via $\log B_{0} = (c(\alpha+1))^{\frac{-1}{\alpha+1}}(\log y_{0}\log_{2} y_{0})^{\frac{1}{\alpha+1}}$. We have that $\tau_{0}\log x_{0} - \tau_{0}\log y_{0} \asymp -\eps_{0}\tau_{0}(\log B_{0})^{\alpha}/\log_{2}B_{0}$, if $\eps_{0}$ is bounded, say, so we may pick an $\eps_{0}$ satisfying even $0 \le \eps_{0} \ll \tau_{0}^{-1}(\log B_{0})^{-\alpha}\log_{2}B_{0}$ so that $\tau_{0}\log x_{0} \in \pi/2+2\pi\Z$.

Now suppose that $B_{k}$ and $\eps_{k}$, $0\le k \le K$ are defined. Choose a number $B_{K+1}>\max\{4(C_{K})^{2}, F(B_{K}), G(k)\}$ with $f(\log B_{K+1}) \in 4\pi\Z$, taking care of \ref{Property (a)} and \ref{Property (b)}. As before, one might choose $\eps_{K+1}$, $0\le \eps_{K+1}\ll \tau_{K+1}^{-1}(\log B_{K+1})^{-\alpha}\log_{2}B_{K+1}$ such that \ref{Property (c)} holds. Property \ref{Property (d)} is obvious. 
\end{proof}

In order to deduce the asymptotics of $N_{C}$, we shall analyze its zeta function $\zeta_{C}$ and use
an effective Perron formula:
\begin{equation}
\label{eq: Perron inversion}
	N_{C}(x) = \frac{1}{2\pi\I}\int_{\kappa-\I T}^{\kappa+\I T}x^{s}\zeta_{C}(s)\frac{\dif s}{s} + \mbox{ error term}.
\end{equation}
Here $\kappa>1$, the parameter $T>0$ is some large number, and the error term depends on these numbers. The usual strategy is then to push the contour of integration to the left of $\sigma = \Re s =1$; the pole of $\zeta_{C}$ at $s=1$ will give the main term, while lower order terms will arise from the integral over the new contour (whose shape will be dictated by the growth of $\zeta_{C}$). In its current form, this approach is not suited for our problem,
 since it is not clear if our zeta function admits a meromorphic continuation to the left of $\sigma=1$. However, we can remedy this with the following truncation idea.

Consider $x\ge 1$ and let $K$ be such that $x < A_{K+1}$. We denote by $\psi_{C,K}$ the Chebyshev function defined by \eqref{eq: definition psi}, but where the summation range in the series is altered to the restricted range $ 0\le k\le K$. For $x<A_{K+1}$ we have $\psi_{C,K}(x) = \psi_{C}(x)$, and, setting $\dif \Pi_{C,K}(u) = (1/\log u) \dif\psi_{C,K}(u)$ and $\dif N_{C,K}(u) = \expm(\dif\Pi_{C,K}(u))$, we also have that $N_{C,K}(x) = N_{C}(x)$ holds in this range. Hence for these $x$, the above Perron formula \eqref{eq: Perron inversion} remains valid if we replace $\zeta_{C}$ by $\zeta_{C,K}$, the zeta function of $N_{C,K}$, which does admit meromorphic continuation beyond $\sigma=1$. 

In the following two sections, we will study the Perron integral in \eqref{eq: Perron inversion} for $x = x_{K}$ and with $\zeta_{C}$ replaced 
by $\zeta_{C,K}$. Note that by \ref{Property (a)}, we may assume that $x_{K} < A_{K+1}$. To asymptotically evaluate this integral, we will use the saddle point method, also known as the method of steepest descent. For an introduction to the saddle point method, we refer to \cite[Chapters 5 and 6]{deBruijn} or \cite[Section 3.6]{EstradaKanwalbook}. 

In Section \ref{sec: The contribution from the saddle points} we will estimate the contribution from the integral over the steepest paths through the saddle points. This contribution will match the oscillation term in \eqref{eq: oscillation N}.
 In Section \ref{sec: The remainder}, we will connect these steepest paths to each other and to the vertical line $[\kappa-\I T, \kappa+\I T]$ and determine that the contribution of these connecting pieces to \eqref{eq: Perron inversion} is of lower order than the contribution from the saddle points. We also estimate the error term in the effective Perron formula in Section \ref{Section conclusion continuous example}, and conclude the analysis of the continuous example. Finally, in Section \ref{sec: Discretization} we use probabilistic methods to show the existence of a \emph{discrete} Beurling system $(\Pi, N)$ that inherits the asymptotics of the continuous system $(\Pi_{C}, N_{C})$.

%%%%%%%%%%%%%%%%%%%%%%%%%%%%%%%%%%%%%%%%%%%%%%%%%%%%%%%%%%%%%%%%%%%%%%%%%%%%%
% THE CONTRIBUTION FROM THE SADDLE POINTS
%%%%%%%%%%%%%%%%%%%%%%%%%%%%%%%%%%%%%%%%%%%%%%%%%%%%%%%%%%%%%%%%%%%%%%%%%%%%%
\section{Analysis of the saddle points}
\label{sec: The contribution from the saddle points}
First we compute the zeta function $\zeta_{C,K}$. Computing the Mellin transform of $\psi_{C,K}$ gives that
\[
	-\frac{\zeta_{C,K}'}{\zeta_{C,K}}(s) = \frac{1}{s-1} - \frac{1}{s} + \sum_{k=0}^{K}\bigl( \eta_{k}(s) + \tilde{\eta}_{k}(s) + \xi_{k}(s) -  \eta_{k}(s+1) - \tilde{\eta}_{k}(s+1) - \xi_{k}(s+1)\bigr),
\]
where 
\begin{equation}
\label{eq: def eta}
	\eta_{k}(s) = \frac{B_{k}^{1-s} - A_{k}^{1-s}}{4(1+\I\tau_{k} -s)}, \quad \tilde{\eta}_{k}(s) =  \frac{B_{k}^{1-s} - A_{k}^{1-s}}{4(1-\I\tau_{k} -s)}, \quad 
	\xi_{k}(s) = \frac{B_{k}^{1-s}-C_{k}^{1-s}}{2(1-s)},
\end{equation}
and where we used property \ref{Property (b)} of the sequences $(A_{k})_{k}$, $(B_{k})_{k}$.
Integrating gives 
\[
	\log \zeta_{C,K}(s) = \log\frac{s}{s-1} + \sum_{k=0}^{K}\int_{s}^{s+1}\bigl(\eta_{k}(z) + \tilde{\eta}_{k}(z) + \xi_{k}(z)\bigr)\dif z,
\]
the integration constant being $0$ because $\log \zeta_{C,K}(\sigma) \rightarrow 0$ 
as $\sigma \rightarrow \infty$. The main term of the Perron integral formula for $N_{C,K}(x_{K})$ becomes 
\[
	\frac{1}{2\pi\I}\int_{\kappa-\I T}^{\kappa+\I T}\frac{x_{K}^{s}}{s-1}\exp\biggl(\sum_{k=0}^{K}\int_{s}^{s+1}\bigl(\eta_{k}(z) + \tilde{\eta}_{k}(z) + \xi_{k}(z)\bigr)\dif z\biggr)\dif s.
\]
The idea of the saddle point method is to estimate an integral of the form $\int_{\Gamma}\e^{f(s)}g(s)\dif s$, with $f$ and $g$ analytic,
by shifting the contour $\Gamma$ to a contour which passes through the saddle points of $f$ via the paths of steepest descent. 
Since the main contribution in the Perron integral will come from $x_{K}^{s}\exp(\int_{s}^{\infty}\eta_{K}(z)\dif z)$, we will apply the method with
\begin{align}
	\label{eq: def f}
	f(s) 	&= f_{K}(s) = s\log x_{K} + \int_{s}^{\infty}\eta_{K}(z)\dif z, \\
	\label{eq: def g}
	g(s) 	&= g_{K}(s) = \frac{1}{s-1}\exp\biggl(\sum_{k=0}^{K}\int_{s}^{s+1}\bigl(\eta_{k}(z) + \tilde{\eta}_{k}(z) + \xi_{k}(z)\bigr)\dif z - \int_{s}^{\infty}\eta_{K}(z)\dif z\biggr).
\end{align}
Note also that by writing $\int_{s}^{\infty}\eta_{K}(z)\dif z $ as a Mellin transform, we obtain the alternative representation
\begin{equation}
\label{eq: formula int eta}
	\int_{s}^{\infty}\eta_{K}(z)\dif z = \frac{1}{4}\int_{A_{K}}^{B_{K}}x^{-s}\e^{\I\tau\log x}\frac{1}{\log x}\dif x = \frac{1}{4}\int_{1/2}^{1}\frac{B_{K}^{(1+\I\tau_{K} - s)u}}{u}\dif u,
\end{equation}
as we have set $A_{K} = \sqrt{B_{K}}$.
In the rest of this section, we will mostly work with $f_{K}$, and we will drop the subscripts $K$ where
 there is no risk of confusion.

\subsection{The saddle points}
We will now compute the saddle points of $f$, which are solutions of the equation
\begin{equation}
\label{eq: saddle point}
	f'(s) = \log x - \frac{1}{4}B^{1-s}\frac{1- B^{(s-1)/2}}{1+\I\tau - s} = 0.
\end{equation}
For integers $m$, set numbers $t^{\pm}_{m}$ as $t^{\pm}_{m} = \tau + (2\pi m \pm \pi/2)/\log B$, and let $V_{m}$ be the rectangle with vertices 
\[
	1-\frac{\frac{\alpha}{2}\log_{2} B}{\log B} + \I t^{\pm}_{m}, \quad \frac{1}{2} + \I t^{\pm}_{m}. 
\]	
\begin{lemma}
	Suppose that $\abs{m} < \log_{2} B$. Then $f'$ has a unique simple zero $s_{m}$ in the interior of $V_{m}$.
\end{lemma}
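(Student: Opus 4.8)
The plan is to apply the argument principle (Rouché's theorem) on the boundary of the rectangle $V_m$. First I would rewrite the saddle point equation $f'(s)=0$ in the form $\log x = \frac14 B^{1-s}\frac{1-B^{(s-1)/2}}{1+\I\tau-s}$, and take logarithms. Writing $s = \sigma + \I t$, the magnitude of the right-hand side is governed by $B^{1-\sigma}$, so that $\sigma \approx 1 - \log(\log x)/\log B$ up to lower-order corrections; since $\log x_K$ is linked to $\log B_K$ through \eqref{eq: def x} and \eqref{eq: def tau}, one checks that the real part of any solution lies near $1 - \frac{\alpha}{2}\log_2 B/\log B$, comfortably inside the $\sigma$-range $[\,1/2,\ 1 - \frac{\alpha}{2}\log_2 B/\log B\,]$ of $V_m$ once $\abs{m}<\log_2 B$ keeps the factor $\abs{1+\I\tau-s}$ under control (it is $\asymp \abs{2\pi m\pm\pi/2}/\log B$ on the relevant edges, never zero). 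The imaginary part condition $\arg(\text{RHS}) \in 2\pi\Z$ forces $t$ to be near one of the values $t_m^\pm = \tau + (2\pi m\pm\pi/2)/\log B$, because the dominant phase is $-t\log B$ coming from $B^{1-s}$ together with the $\pm\pi/2$ shift produced by $\arg(1-B^{(s-1)/2})$ and $\arg(1+\I\tau-s)$ near the center. This heuristic identifies exactly one candidate zero per rectangle.

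To make this rigorous I would compare $f'(s)$ on $\partial V_m$ with the simpler function $h(s) = \log x - \frac14 B^{1-s}\,\frac{1}{\I\tau-s}$ (dropping the $B^{(s-1)/2}$ correction, which is $O(B^{-1/4})$ relative to the main term and hence negligible), or even more simply track the winding of $\arg f'(s)$ directly. On the top and bottom edges $t = t_m^\pm$, by the arithmetic choice of $t_m^\pm$ the quantity $B^{1-s}/(1+\I\tau-s)$ has argument close to $0$ or $\pi$ respectively modulo the slowly varying contributions, so $f'(s)$ stays in a fixed half-plane and contributes negligibly to the winding number. On the right edge $\sigma = 1 - \frac{\alpha}{2}\log_2 B/\log B$ the term $\log x$ dominates (one verifies $B^{1-\sigma} = (\log B)^{\alpha/2} = o(\log x)$ after substituting the definitions), so $f'$ is essentially the nonzero constant $\log x$ and again contributes nothing. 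On the left edge $\sigma = 1/2$ we have $B^{1-s} = B^{1/2} = A_K$, which is much larger than $\log x$, so here the term $-\frac14 B^{1-s}(\cdots)/(1+\I\tau-s)$ dominates; as $t$ runs from $t_m^-$ to $t_m^+$, i.e. over a $t$-interval of length $2\pi/\log B$, the phase $-t\log B$ of $B^{1-s}$ decreases by $2\pi$, producing exactly one full turn. Hence $\frac{1}{2\pi\I}\oint_{\partial V_m} \frac{f''(s)}{f'(s)}\,\dif s = 1$, giving a unique simple zero $s_m$ inside $V_m$; simplicity also follows because a double zero would require $f''(s_m)=0$, which one rules out since $f''(s) = \frac{\dif}{\dif s}\bigl(-\frac14 B^{1-s}\frac{1-B^{(s-1)/2}}{1+\I\tau-s}\bigr) \approx \frac{\log B}{4}B^{1-s}/(1+\I\tau-s) \ne 0$ near $s_m$.

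The main obstacle, and the step requiring the most care, is the estimate on the left edge $\sigma = 1/2$: there the "small" correction factors $1 - B^{(s-1)/2} = 1 - B^{-1/4}\e^{-\I t/2\cdot\log B/\log B}$ and the denominator $1+\I\tau - s$ vary by non-negligible amounts over the edge, so one must confirm that the winding is governed purely by the factor $B^{1-s}$ and that these secondary factors cannot create or cancel a turn. This is where the hypothesis $\abs{m} < \log_2 B$ is essential: it guarantees $\abs{t_m^\pm - \tau} \ll \log_2 B/\log B$, hence $\abs{1 + \I\tau - s} \asymp \log_2 B/\log B$ stays bounded away from $0$ and varies only mildly (its argument changes by $O(1/\log_2 B) = o(1)$ across the edge), and likewise $B^{(s-1)/2}$ has modulus $B^{-1/4}$ throughout so the correction $1 - B^{(s-1)/2}$ stays within a small disc around $1$. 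With these bounds in hand the continuity/degree argument closes. I would also remark that the same computation, carried out on the outer boundary of the union $\bigcup_{\abs{m}<\log_2 B} V_m$, shows these are the only zeros of $f'$ in the region of interest, which is what is used in the subsequent steepest-descent analysis.
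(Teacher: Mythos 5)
Your overall strategy — apply the argument principle on $\partial V_m$, show $\log x$ dominates on the right edge and $B^{1-s}$ dominates on the left edge, and control the argument of $f'$ via the choice of $t_m^\pm$ — is precisely what the paper does. However, your accounting of the winding is incorrect in two compensating places, so the proof as written is not sound.

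First, the $t$-interval on the left edge has length $t_m^+ - t_m^- = \pi/\log B$, not $2\pi/\log B$: the vertices are $\tau + (2\pi m \pm \pi/2)/\log B$, and the difference of the two endpoints is $\pi/\log B$. Consequently the phase of $B^{1-s}=B^{1/2}\e^{-\I t\log B}$ changes by $\pi$, not $2\pi$, along the left edge. (Consecutive rectangles $V_m$, $V_{m+1}$ are in fact separated by gaps of the same width $\pi/\log B$; they do not tile the strip.) Second, and relatedly, the top and bottom edges do \emph{not} contribute negligibly. Your claim that $B^{1-s}/(1+\I\tau-s)$ has argument near $0$ or $\pi$ there is off: at $t=t_m^{\mp}$ the phase of $B^{1-s}$ is $-(t-\tau)\log B \equiv \pm\pi/2 \pmod{2\pi}$, and since $1+\I\tau-s$ has small argument on these edges, $\arg f'$ moves from roughly $\mp\pi/2$ at the left end (where the $B^{1-s}$ term dominates and $f'\approx \mp\frac{\I}{2}B^{1/2}$) to roughly $0$ at the right end (where $\log x$ dominates). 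Thus each horizontal edge contributes an argument change of about $\pi/2$. The correct tally around $\partial V_m$ counterclockwise is: lower edge $+\pi/2$, right edge $0$, upper edge $+\pi/2$, left edge $+\pi$, giving $2\pi$ and winding number $1$.

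Had you used the correct interval length $\pi/\log B$ while retaining your claim that the horizontal edges are negligible, you would have obtained a total winding of $\pi$, which is not an integer multiple of $2\pi$ — an internal inconsistency that should have signalled the missing contribution. The two mistakes happen to cancel, which is why you land on the right answer, but the intermediate steps need to be repaired before this is a proof. Aside from this, your remarks on the role of the hypothesis $\abs{m}<\log_2 B$ are slightly off target: its main purpose in the paper's version is to guarantee that the sign of $\Im f'$ is fixed on the horizontal edges (keeping the factor $(1-\tfrac{\sqrt 2}{2}B^{(\sigma-1)/2})(1-\sigma)+\tfrac{\sqrt 2}{2}B^{(\sigma-1)/2}(\tau-t_m^{\mp})$ of one sign up to the right endpoint), rather than to control $\abs{1+\I\tau-s}$ on the left edge, where $\abs{1+\I\tau-s}\asymp 1/2$ regardless of $m$.
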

\begin{proof}
We apply the argument principle. Note that from \eqref{eq: def x} it follows that
\begin{align*}
	f'\biggl(\frac{1}{2} + \I t^{-}_{m}\biggr) 	&= -\frac{\I}{2}B^{1/2}\bigl(1+o(1)\bigr), & f'\biggl(1-\frac{\frac{\alpha}{2}\log_{2} B}{\log B} + \I t^{-}_{m}\biggr)	&=\log x\bigl(1+o(1)\bigr), \\
	f'\biggl(1-\frac{\frac{\alpha}{2}\log_{2} B}{\log B} + \I t^{+}_{m}\biggr)	&=\log x\bigl(1+o(1)\bigr), & f'\biggl(\frac{1}{2} + \I t^{+}_{m}\biggr)	&= \frac{\I}{2}B^{1/2}\bigl(1+o(1)\bigr).
\end{align*}
On the lower horizontal side of $V_{m}$, we have
\[
	\Im f'(\sigma+\I t^{-}_{m}) = -\frac{B^{1-\sigma}/4}{(1-\sigma)^{2}+(\tau-t_{m}^{-})^{2}}\biggl\{ \biggl(1-\frac{\sqrt{2}}{2}B^{\frac{\sigma-1}{2}}\biggr)(1-\sigma) 
	+ \frac{\sqrt{2}}{2}B^{\frac{\sigma-1}{2}}(\tau-t_{m}^{-})\biggr\} < 0,
\]
as the factor inside the curly brackets is positive in the considered ranges for $\sigma$ and $m$. Similarly we have $\Im f'(\sigma+\I t^{+}_{m})>0$ on the upper horizontal edge of $V_{m}$. On the right vertical edge, 
\[
	\Re f'\biggl(1-\frac{\frac{\alpha}{2}\log_{2}B}{\log B} +\I t\biggr) >0,
\]
and on the left vertical edge, 
\[
	f'\biggl(\frac{1}{2}+\I t\biggr) = \frac{B^{1/2}}{2}\e^{\I\pi-\I(t-\tau)\log B}\bigl(1+o(1)\bigr).
\]

Starting from the lower left vertex of $V_{m}$ and moving in the counterclockwise direction, we see that the argument of $f'$ starts off close to $-\pi/2$, increases to about $0$ on the lower horizontal edge, remains close to $0$ on the right vertical edge, increases to about $\pi/2$ on the upper horizontal edge, and finally increases to approximately $3\pi/2$ on the left vertical edge. This proves the lemma.
\end{proof}
%Using the argument principle, one can verify that $f$ has a unique saddle point in $V_{m}$ if $\abs{m} < \log_{2} B$ say, and that this saddle point is simple (i.e.\ it is a zero of first order of $f'$). We denote it by $s_{m}=\sigma_{m}+\I t_{m}$. 
From now on, we assume that $\abs{m} < \eps \log_{2} B$ for some small $\eps>0$. (In fact, later on we will further reduce the range 
 to $\abs{m}\le (\log_{2} B)^{3/4}$.) We denote the unique saddle point in the rectangle $V_{m}$ by $s_{m} = \sigma_{m} + \I t_{m}$.
The saddle point equation \eqref{eq: saddle point} implies that
\begin{align*}
	\sigma_{m}	&= 1-\frac{1}{\log B}\biggl(\log_{2} x + \log 4 - \log\abs[1]{1-B^{(s_{m}-1)/2}} - \log\abs{\frac{1}{1+\I\tau-s_{m}}}\,\biggr), \\
	t_{m}			&= \tau + \frac{1}{\log B}\biggl(2\pi m + \arg\bigl(1-B^{(s_{m}-1)/2}\bigr) - \arg\bigl(1+\I\tau - s_{m}\bigr)\biggr),
\end{align*}
with the understanding that the difference of the arguments in the formula for $t_{m}$ lies in $[-\pi/2,\pi/2]$.
We set 
\[
	E_{m} = \log\abs{\frac{1}{1+\I\tau - s_{m}}}.
\]
Since $s_{m}\in V_{m}$, we have $0\le E_{m}\le \log_{2} B$. Also $\log\abs[1]{1-B^{(s_{m}-1)/2}} = O(1)$. This implies that 
\[
	\sigma_{m} = 1 - \frac{1}{\log B}\bigl(\log_{2} x - E_{m} + O(1)\bigr),
\]
so that $E_{m} = \log_{2} B - \log_{3}x + O(1)$. Here we have also used that 
\[
	\tau-t_{m} \ll \frac{\log_{2} B}{\log B}, \quad \mbox{and} \quad \log_{2} B \sim \frac{1}{\alpha+1}\log_{2} x,
\]
the last formula following from \eqref{eq: def x}. This in turn implies that 
\begin{equation}
\label{eq: approx sigma}
	\sigma_{m} = 1-\frac{\alpha\log_{2} B + O(1)}{\log B}, 
\end{equation}
where we again used \eqref{eq: def x}. Combining this with \eqref{eq: saddle point} we get in particular that
\begin{equation}
\label{eq: approx log x}
	\log x = \frac{B^{1-s_{m}}}{4(1+\I\tau-s_{m})}\bigl(1+O\bigl((\log B)^{-\alpha/2}\bigr)\bigr).
\end{equation}
For $t_{m}$, we have that 
\begin{align*}
	\arg(1-B^{(s_{m}-1)/2}) 	&\ll (\log B)^{-\alpha/2}, \\
	\arg(1+\I\tau - s_{m})	&= -\frac{2\pi m}{\alpha\log_{2} B} + O\biggl(\frac{1}{\log_{2} B} + \frac{\abs{m}}{(\log_{2} B)^{2}} + \frac{\abs{m}^{3}}{(\log_{2} B)^{3}}\biggr).
\end{align*}
We get that 
\begin{equation}
\label{eq: approx t}
	t_{m} = \tau + \frac{1}{\log B}\biggl\{2\pi m\biggl(1+\frac{1}{\alpha\log_{2} B}\biggr) + O\biggl(\frac{1}{\log_{2} B} + \frac{\abs{m}}{(\log_{2} B)^{2}} + \frac{\abs{m}^{3}}{(\log_{2} B)^{3}}\biggr) \biggr\}.
\end{equation}
Also, it is important to notice that  $t_{0} = \tau$.

The main contribution to the Perron integral \eqref{eq: Perron inversion} will come from the saddle point $s_{0}$; see Subsection \ref{subsection: contribution from s_0}. We will show in Subsection \ref{subsection: contributions from s_m} that the contribution from the other saddle points $s_{m}$, $m\neq 0$, is of lower order. This will require a finer estimate for $\sigma_{m}$, which is the subject of the following lemma.
\begin{lemma}
\label{lem: finer estimate sigma}
There exists a fixed constant $d>0$, independent of $K$ and $m$, such that for $\abs{m}\le (\log_{2}B)^{3/4}$, $m\neq0$, 
\[
	\sigma_{m} \le \sigma_{0} - \frac{d}{\log B(\log_{2} B)^{2}}.
\]
\end{lemma}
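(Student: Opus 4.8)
The plan is to derive the finer estimate for $\sigma_m$ by revisiting the formula
\[
	\sigma_{m} = 1 - \frac{1}{\log B}\bigl(\log_{2} x + \log 4 - \log\abs[1]{1-B^{(s_{m}-1)/2}} - E_{m}\bigr)
\]
and comparing the term $E_m = \log\abs{1/(1+\I\tau - s_m)}$ across different values of $m$, including $m=0$. Since $\sigma_0$ corresponds to $E_0$ and $\sigma_m$ to $E_m$, and the terms $\log\abs{1-B^{(s_m-1)/2}}$ are $O((\log B)^{-\alpha/2})$ and essentially negligible at this level of precision, the entire gap $\sigma_0 - \sigma_m$ is, up to a factor $1/\log B$ and a small error, equal to $E_0 - E_m$. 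So the heart of the matter is to show $E_0 - E_m \gg 1/(\log_2 B)^2$ for $0 < \abs{m} \le (\log_2 B)^{3/4}$.

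To estimate $E_0 - E_m$, I would write $1 + \I\tau - s_m = (1 - \sigma_m) + \I(\tau - t_m)$, so that
\[
	E_m = -\tfrac12\log\bigl((1-\sigma_m)^2 + (\tau-t_m)^2\bigr).
\]
From \eqref{eq: approx sigma}, $1 - \sigma_m = (\alpha\log_2 B + O(1))/\log B$, which is the same for all $m$ in our range up to the $O(1)$, while from \eqref{eq: approx t}, $\tau - t_m = \frac{2\pi m}{\log B}(1 + O(1/\log_2 B)) + (\text{lower order})$ — in particular $\tau - t_0 = 0$ exactly. Thus $(1-\sigma_0)^2 + (\tau - t_0)^2 = (1-\sigma_0)^2$, whereas for $m \ne 0$,
\[
	(1-\sigma_m)^2 + (\tau - t_m)^2 = (1-\sigma_m)^2 + \frac{4\pi^2 m^2}{(\log B)^2}\bigl(1 + o(1)\bigr).
\]
Since $1 - \sigma_m \asymp \log_2 B/\log B$, the ratio of the $(\tau-t_m)^2$ term to the $(1-\sigma_m)^2$ term is $\asymp m^2/(\log_2 B)^2$, which for $\abs{m}\le(\log_2 B)^{3/4}$ is at most $\asymp (\log_2 B)^{-1/2} = o(1)$. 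Therefore
\[
	E_0 - E_m = \tfrac12\log\frac{(1-\sigma_m)^2 + (\tau-t_m)^2}{(1-\sigma_0)^2} \ge \tfrac12\log\Bigl(1 + \frac{c_1 m^2}{(\log_2 B)^2}\Bigr) - (\text{small correction}) \gg \frac{m^2}{(\log_2 B)^2} \ge \frac{1}{(\log_2 B)^2}
\]
for a suitable constant, using $\log(1+u) \asymp u$ for small $u \ge 0$. Dividing by $\log B$ yields the claimed bound $\sigma_m \le \sigma_0 - d/(\log B(\log_2 B)^2)$.

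The main obstacle is bookkeeping the error terms carefully enough to be sure the gap is not swallowed. Two subtleties deserve attention. First, the $O(1)$ discrepancies in $1-\sigma_m$ versus $1-\sigma_0$ contribute a multiplicative factor $1 + O(1/\log_2 B)$ inside the logarithm in $E_0 - E_m$; since $1/\log_2 B$ is much larger than $m^2/(\log_2 B)^2$ when $\abs{m}$ is small, one cannot simply absorb it — instead one must revisit the derivation of $\sigma_m$ to extract the $m$-dependence of the $O(1)$ term explicitly, or, more cleanly, observe that the difference $\sigma_0 - \sigma_m$ can be written directly as $\frac{1}{\log B}(E_m - E_0 + \log\abs{1-B^{(s_m-1)/2}} - \log\abs{1-B^{(s_0-1)/2}})$ so the stray $\log_2 x$ and $\log 4$ terms cancel exactly, leaving only genuinely $O((\log B)^{-\alpha/2})$ pieces plus $E_m - E_0$. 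Second, one must check that the lower-order terms in \eqref{eq: approx t} — namely the $O(\abs{m}/(\log_2 B)^2 + \abs{m}^3/(\log_2 B)^3)$ inside the braces — are dominated by the main term $2\pi m$; for $\abs{m}\le(\log_2 B)^{3/4}$ this holds since $\abs{m}^3/(\log_2 B)^3 \le (\log_2 B)^{-3/4} = o(\abs{m})/1$ after dividing through, so $\tau - t_m = \frac{2\pi m}{\log B}(1+o(1))$ with the $o(1)$ uniform, which is exactly what the argument needs. Once these two points are handled, the inequality follows by monotonicity of $\log$ and the elementary bound $\log(1+u)\ge u/2$ for $0\le u\le 1$.
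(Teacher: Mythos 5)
Your strategy --- write $\sigma_0 - \sigma_m = \frac{1}{\log B}\bigl(E_0 - E_m + O((\log B)^{-\alpha/2})\bigr)$ and then bound $E_0 - E_m = \tfrac12\log\frac{(1-\sigma_m)^2 + (\tau-t_m)^2}{(1-\sigma_0)^2}$ --- is in the same spirit as the paper's, which extracts the $m$-dependence of $\sigma_m$ explicitly by iterating the saddle-point relation three times. (Incidentally, you have a sign slip: the exact cancellation gives $\sigma_0 - \sigma_m = \frac{1}{\log B}(E_0 - E_m + \dotsb)$, not $E_m - E_0$.) However, there is a real gap at the very point you flag as ``subtlety 1''. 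The factor $\tfrac12\log\frac{(1-\sigma_m)^2}{(1-\sigma_0)^2}$ hidden inside $E_0 - E_m$ has magnitude $O(1/\log_2 B)$, which for small $|m|$ dominates the target $m^2/(\log_2 B)^2$, and its sign is precisely the unknown you wish to determine. Neither of your proposed fixes closes this. The ``cancellation'' rewriting is exact but circular: it expresses $\sigma_0 - \sigma_m$ through $E_0 - E_m$, which still contains $\sigma_0 - \sigma_m$, so nothing is gained until that implicit relation is actually solved. The other option, ``revisit the derivation to extract the $m$-dependence explicitly'', is exactly what the paper does (three iterations of the $\sigma_m$ formula), but you never carry it out. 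The closing sentence simply asserts the conclusion.

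What is missing is a bootstrap. Set $v = E_0 - E_m$, $u = 1 - \sigma_0$, $\delta = \tau - t_m$, and use $1-\sigma_m = u + \bigl(v + O((\log B)^{-\alpha/2})\bigr)/\log B$ to obtain
\[
	v = \tfrac12\log\bigl[(1 + a)^2 + b^2\bigr], \qquad a = \frac{v + O((\log B)^{-\alpha/2})}{u\log B}, \quad b = \frac{\delta}{u}.
\]
Expanding the logarithm and rearranging yields
\[
	v\Bigl(1 - \frac{1}{u\log B}\Bigr) = \tfrac12 b^2 - \tfrac12 a^2 + O\bigl(\abs{a}^3 + \abs{a}b^2 + b^4\bigr).
\]
Since $1/(u\log B) = 1/(\alpha\log_2 B + O(1)) \to 0$, the left factor is bounded below, and the crude estimate $v=O(1)$ (from \eqref{eq: approx sigma}) gives $a=O(1/\log_2 B)$; this already implies $\abs{v} \ll (1+m^2)/(\log_2 B)^2$. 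Feeding this back in shows $a^2$, $\abs{a}b^2$, and $b^4$ are all $o(b^2)$ for $1\le\abs{m}\le(\log_2 B)^{3/4}$ --- the constraint on $\abs{m}$ is exactly what keeps $b^4\asymp m^4/(\log_2 B)^4$ strictly below $b^2$ --- so $v = \tfrac12 b^2(1+o(1)) \asymp m^2/(\log_2 B)^2$, and the lemma follows on dividing by $\log B$. The paper's three-iteration computation encodes the same cancellation: its final iterate isolates the terms $2\pi^2 m^2/Y'^2 - 4\pi^4 m^4/Y'^4$, which dominate the $O((1+m^2)/(\log_2 B)^3)$ error in precisely that range of $m$.
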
 
\begin{proof}
We use \eqref{eq: approx sigma} and \eqref{eq: approx t} to get a better estimate for $E_{m}$, which will in turn yields a better estimate for $\sigma_{m}$. We iterate this procedure three times. 

The first iteration yields 
%\[
%	\log\abs[1]{1-B^{(s-1)/2}} \ll (\log B)^{-\alpha/2}, \quad E_{m} = \log_{2} B - \log_{3} B -\log\alpha + O\biggl(\frac{1+\abs{m}}{\log_{2} B}\biggr),
%\]
%so that 
\[
	\sigma_{m} = 1 - \frac{1}{\log B}\biggl\{\log_{2} x - \log_{2} B + \log_{3} B + \log 4 + \log\alpha + O\biggl(\frac{1+\abs{m}}{\log_{2} B}\biggr)\biggr\}.
\]
Write $Y = \log_{2}x - \log_{2}B + \log_{3}B$ and note that $Y \asymp \log_{2}B$. Iterating a second time, we get 
\[
	\sigma_{m} = 1-\frac{1}{\log B}\biggl\{\log_{2}x - \log_{2}B + \log Y +\log 4 + \frac{\log4+\log\alpha}{Y} + O\biggl(\frac{1+m^{2}}{(\log_{2}B)^{2}}\biggr)\biggr\}.
\]
We now set $Y' = \log_{2}x-\log_{2}B + \log Y$, and note again that $Y'\asymp \log_{2}B$. A final iteration gives
\begin{multline*}
	\sigma_{m} = 1 - \frac{1}{\log B}\biggl\{\log_{2}x - \log_{2}B + \log Y' + \log4 + \frac{\log 4}{Y'} + \frac{\log4+\log\alpha}{YY'} \\ 
	-\frac{(\log4)^{2}}{2Y'^{2}}
	+\frac{2\pi^{2}m^{2}}{Y'^{2}} - \frac{4\pi^{4}m^{4}}{Y'^{4}} + O\biggl(\frac{1+m^{2}}{(\log_{2}B)^{3}}\biggr)\biggr\}
\end{multline*}
The lemma now follows from comparing the above formula in the case $m=0$ with the case $m\neq0$.
\end{proof}
Near the saddle points we will approximate $f$ and $f'$ by their Taylor polynomials.
\begin{lemma}
\label{lem: approx f}
There are holomorphic functions $\lambda_{m}$ and $\tilde{\lambda}_{m}$ such that 
\begin{align*}
	f(s)	&= f(s_{m}) + \frac{f''(s_{m})}{2}(s-s_{m})^{2}(1+\lambda_{m}(s)), \\
	f'(s)	&= f''(s_{m})(s-s_{m})(1+\tilde{\lambda}_{m}(s)),
\end{align*}
and with the property that for each $\eps>0$ there exists a $\delta>0$, independent of $K$ and $m$, such that 
\[
	\abs{s-s_{m}} < \frac{\delta}{\log B} \implies \abs{\lambda_{m}(s)}+\abs[1]{\tilde{\lambda}_{m}(s)} < \eps.
\]
\end{lemma}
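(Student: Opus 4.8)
The plan is to define $\lambda_m$ and $\tilde\lambda_m$ as the normalized Taylor remainders of $f$ at $s_m$ and to reduce the uniform estimate to two ingredients that are essentially already within reach from the saddle‑point analysis of this section: a uniform upper bound for $f'''$ on a disk of radius $1/\log B$ about $s_m$, and a \emph{matching} lower bound for $\abs{f''(s_m)}$. First I would record that, since $f(s)=s\log x_{K}+\int_{s}^{\infty}\eta_{K}(z)\dif z$ and $\eta_{K}$ is entire (by property \ref{Property (b)} the numerator in \eqref{eq: def eta} vanishes at the apparent pole $s=1+\I\tau$), one has $f^{(j)}=-\eta_{K}^{(j-1)}$ for $j\ge 2$, and $f$ itself is entire. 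Because $f'(s_{m})=0$, the functions
\[
	\tilde{\lambda}_{m}(s)=\frac{f'(s)}{f''(s_{m})(s-s_{m})}-1, \qquad \lambda_{m}(s)=\frac{2\bigl(f(s)-f(s_{m})\bigr)}{f''(s_{m})(s-s_{m})^{2}}-1
\]
have removable singularities at $s_{m}$ and define holomorphic functions (once we know $f''(s_{m})\neq0$, which will come out of the argument), and they visibly produce the two claimed identities.

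Taylor's formula with integral remainder, together with $f'(s_{m})=0$, gives
\[
	f'(s)-f''(s_{m})(s-s_{m})=\int_{s_{m}}^{s}\bigl(f''(\zeta)-f''(s_{m})\bigr)\dif\zeta, \qquad f(s)-f(s_{m})-\frac{f''(s_{m})}{2}(s-s_{m})^{2}=\frac{1}{2}\int_{s_{m}}^{s}(s-\zeta)^{2}f'''(\zeta)\dif\zeta,
\]
so that, writing $M_{m}(r):=\sup\{\,\abs{f'''(\zeta)}:\abs{\zeta-s_{m}}\le r\,\}$, both $\abs[1]{\tilde{\lambda}_{m}(s)}$ and $\abs{\lambda_{m}(s)}$ are $\ll \abs{s-s_{m}}\,M_{m}(\abs{s-s_{m}})/\abs{f''(s_{m})}$. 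Hence it suffices to prove that $M_{m}(1/\log B)\ll (\log B)\,\abs{f''(s_{m})}$ uniformly in $K$ and in $m$ with $\abs{m}\le(\log_{2}B)^{3/4}$; if $C$ is such an implied constant, then $\abs{s-s_{m}}<\delta/\log B$ with $\delta<\eps/C$ yields $\abs{\lambda_{m}(s)}+\abs[1]{\tilde{\lambda}_{m}(s)}<\eps$.

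For these two bounds I would differentiate the Mellin representation \eqref{eq: formula int eta}, giving $\eta_{K}^{(j)}(s)=(-1)^{j}\tfrac14(\log B)^{j+1}\int_{1/2}^{1}u^{j}B^{(1+\I\tau-s)u}\dif u$. On the disk $\abs{\zeta-s_{m}}\le 1/\log B$, estimate \eqref{eq: approx sigma} gives $\Re(1+\I\tau-\zeta)\log B=\alpha\log_{2}B+O(1)$, hence $\abs{B^{(1+\I\tau-\zeta)u}}\ll(\log B)^{\alpha}$ for $u\in[1/2,1]$ and, this exponent tending to infinity, a Laplace‑type estimate yields $\int_{1/2}^{1}u^{2}\abs{B^{(1+\I\tau-\zeta)u}}\dif u\asymp(\log B)^{\alpha}/\log_{2}B$; therefore $M_{m}(1/\log B)\ll(\log B)^{3+\alpha}/\log_{2}B$. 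For the lower bound, the same concentration of the weight near $u=1$ — with the oscillation controlled through \eqref{eq: approx t}, which bounds $\Im(1+\I\tau-s_{m})=\tau-t_{m}$ — shows $\int_{1/2}^{1}uB^{(1+\I\tau-s_{m})u}\dif u=(1+o(1))\int_{1/2}^{1}B^{(1+\I\tau-s_{m})u}\dif u$, so that by the saddle equation $\eta_{K}(s_{m})=\log x_{K}$ and \eqref{eq: approx log x},
\[
	f''(s_{m})=-\eta_{K}'(s_{m})=\frac{(\log B)^{2}}{4}\bigl(1+o(1)\bigr)\int_{1/2}^{1}B^{(1+\I\tau-s_{m})u}\dif u=\log x_{K}\,\log B\,\bigl(1+o(1)\bigr).
\]
Since \eqref{eq: def x} gives $\abs{\log x_{K}}\asymp(\log B)^{1+\alpha}/\log_{2}B$, this forces $\abs{f''(s_{m})}\asymp(\log B)^{2+\alpha}/\log_{2}B$ (in particular nonzero), and dividing the two displays gives $M_{m}(1/\log B)\ll(\log B)\,\abs{f''(s_{m})}$ as required.

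The delicate point is precisely that these two bounds must match uniformly in $m$: the cruder estimate $\abs{f'''}\ll(\log B)^{3+\alpha}$ would make the ratio grow like $\log_{2}B$, so it is essential that both $M_{m}$ and $\abs{f''(s_{m})}$ carry the same factor $1/\log_{2}B$, which comes from the concentration of the $u$‑integral at the endpoint $u=1$. Making that concentration precise — and verifying that all the $o(1)$'s are uniform in $m$ and $K$, via \eqref{eq: approx sigma}, \eqref{eq: approx log x} and \eqref{eq: approx t} — is the main bookkeeping obstacle; the rest (analyticity of $\lambda_m$, $\tilde\lambda_m$, the remainder estimates) is routine.
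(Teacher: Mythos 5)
Your proposal is correct and takes essentially the same approach as the paper: both establish $\abs{f''(s_m)}\asymp(\log B)^{\alpha+2}/\log_2 B$ together with the matching bound $\abs{f'''(\zeta)}\ll(\log B)^{\alpha+3}/\log_2 B$ on a disk of radius $\asymp 1/\log B$ about $s_m$, and conclude by Taylor's theorem, the point in both being that the factors $1/\log_2 B$ match so the ratio $\abs{f'''}\abs{s-s_m}/\abs{f''(s_m)}$ can be made small. The only cosmetic difference is that you differentiate the Mellin representation \eqref{eq: formula int eta} (and invoke the saddle equation $\eta_K(s_m)=\log x_K$ plus a Laplace-concentration estimate at $u=1$ to identify $f''(s_m)\sim(\log B)\log x_K$), whereas the paper works directly with the explicit rational expressions for $f''$ and $f'''$ and reads off the same orders via \eqref{eq: approx sigma}.
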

\begin{proof}
We have 
\[
	f''(s) = (\log B)\frac{B^{1-s} - \frac{1}{2} B^{(1-s)/2}}{4(1+\I\tau - s)} - \frac{B^{1-s} - B^{(1-s)/2}}{4(1+\I\tau-s)^{2}}, 
	\quad \abs{f''(s_{m})} \asymp \frac{(\log B)^{\alpha}(\log B)^{2}}{\log_{2}B},
\]
where we have used \eqref{eq: approx sigma},
and
\[
	f'''(s) = -(\log B)^{2}\frac{B^{1-s} - \frac{1}{4}B^{(1-s)/2}}{4(1+\I\tau-s)} + (\log B)\frac{B^{1-s}-\frac{1}{2}B^{(1-s)/2}}{2(1+\I\tau-s)^{2}} - \frac{B^{1-s}-B^{(1-s)/2}}{2(1+\I\tau-s)^{3}}.
\]
If $\abs{s-s_{m}}\ll 1/\log B$, then 
\[
	\abs[1]{f'''(s)} \ll \frac{(\log B)^{\alpha}(\log B)^{3}}{\log_{2}B}.
\]
It follows that 
\[
	\abs[3]{\frac{f'''(s)}{f''(s_{m})}(s-s_{m})} < \eps,
\]
if $\abs{s-s_{m}} < \delta/\log B$, for sufficiently small $\delta$. The lemma now follows from Taylor's formula. 
\end{proof}

%%%%%%%%%%%%%%%%%%%%%%%%%%%%%%%%%%%%%%%%%%%%%%%%%%%%%%%%%%%%%%%%%%%%%%%%%%%%%
\subsection{The steepest path through $s_{0}$}

The equation for the path of steepest descent through $s_{0}$ is 
\[
	\Im f(s) = \Im f(s_{0}) \mbox{ under the constraint } \Re f(s) \le \Re f(s_{0}).
\] 
Using the formula \eqref{eq: formula int eta} for  $\int_{s}^{\infty}\eta(z)\dif z$, we get the equation
\[
	t\log x - \frac{1}{4}\int_{1/2}^{1}B^{(1-\sigma)u}\sin\bigl((t-\tau)(\log B) u\bigr)\frac{\dif u}{u} = \tau\log x.
\]
Setting $\theta=(t-\tau)\log B$, this is equivalent to
\begin{equation}
\label{eq: steepest path s_{0}}
	\theta\frac{\log x}{\log B} = \frac{1}{4}\int_{1/2}^{1}B^{(1-\sigma)u}\sin(\theta u)\frac{\dif u}{u}.
\end{equation}
Note that, as $t$ varies between $t_{0}^{-}$ and $t_{0}^{+}$, $\theta$ varies between $-\pi/2$ and $\pi/2$. This equation has every point of the line $\theta=0$ as a solution. However, one sees that the line $\theta=0$ is the path of steepest \emph{ascent}, since $\Re f(s) \ge \Re f(s_{0})$ there. We now show the existence of a different curve through $s_{0}$ of which each point is a solution of \eqref{eq: steepest path s_{0}}. This is then necessarily the path of steepest \emph{descent}. For each fixed $\theta\in [-\pi/2, \pi/2] \setminus \{0\}$, equation \eqref{eq: steepest path s_{0}} has a unique solution $\sigma=\sigma_{\theta}$, since the right hand side is a continuous and monotone function of $\sigma$, with range $\R_{\gtrless 0}$, if $\theta\gtrless 0$. This shows the existence of the path of steepest descent $\Gamma_{0}$ through $s_{0}$. This path connects the lines $\theta=-\pi/2$ and $\theta=\pi/2$.

One can readily see that 
\[
	\sigma_{\theta} = \sigma_{0} - \frac{a_{\theta}}{\log B}, \quad \mbox{where } \abs{a_{\theta}}\ll 1.
\]
%If $\abs{\theta}\ge\delta$ for some fixed $\delta>0$, then, 
Integrating by parts, we see that 
\begin{align*}
	\frac{1}{4}\int_{1/2}^{1}B^{(1-\sigma_{\theta})u}\sin(\theta u)\frac{\dif u}{u} 
		&= \frac{1}{4}\sin\theta \frac{B^{1-\sigma_{\theta}}}{(1-\sigma_{\theta})\log B}\bigl(1+O\bigl((\log B)^{-\alpha/2}\bigr) + O\bigl((\log_{2} B)^{-1}\bigr)\bigr) \\
		&=\frac{\sin\theta}{4\log B}\frac{B^{1-\sigma_{0}}}{1-\sigma_{0}}\e^{a_{\theta}}\bigl(1+O\bigl((\log_{2} B)^{-1}\bigr)\bigr) \\
		&=\sin\theta\frac{\log x}{\log B}\e^{a_{\theta}}\bigl(1+O\bigl((\log_{2} B)^{-1}\bigr)\bigr),
\end{align*}
where we used \eqref{eq: approx log x} in the last line. Equation \eqref{eq: steepest path s_{0}} then implies that 
\begin{equation}
\label{eq: a_{theta}}
	\e^{a_{\theta}} = \frac{\theta}{\sin\theta} + O\bigl((\log_{2} B)^{-1}\bigr).
\end{equation}

Let $\gamma$ now be a unit speed parametrization of this path of steepest descent:
\[
	\gamma: [y^{-}, y^{+}] \to \Gamma_{0}, \quad \Im\gamma(y^{-}) = \tau-\frac{\pi/2}{\log B}, \quad \gamma(0) = s_{0}, \quad \Im \gamma(y^{+}) = \tau+\frac{\pi/2}{\log B}, \quad \abs{\gamma'(y)}=1.
\]
The fact that $\Gamma_{0}$ is the path of steepest descent implies that for $y<0$, $\gamma'(y)$ is a positive multiple of $\overline{f'}(\gamma(y))$, while for $y>0$, $\gamma'(y)$ is a negative multiple of $\overline{f'}(\gamma(y))$. We now show that the argument of the tangent vector $\gamma'(y)$ is sufficiently close to $\pi/2$.
\begin{lemma}
\label{lem: bound arg gamma}
For $y\in [y^{-}, y^{+}]$, $\abs[1]{\arg\bigl(\gamma'(y)\e^{-\I\pi/2}\bigr)} < \pi/5$.
\end{lemma}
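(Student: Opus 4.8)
The goal is to show that along the steepest-descent path $\Gamma_0$ through $s_0$, the tangent direction never strays far from the vertical, concretely $|\arg(\gamma'(y)\e^{-\I\pi/2})| < \pi/5$. The plan is to exploit the two facts already available: the steepest-descent relation $\gamma'(y) = \pm\,\overline{f'(\gamma(y))}/|f'(\gamma(y))|$ (with the sign negative for $y>0$, positive for $y<0$), and the factorization from Lemma \ref{lem: approx f}, $f'(s) = f''(s_m)(s-s_m)(1+\tilde\lambda_m(s))$ with $\tilde\lambda_m$ small on a $\delta/\log B$-neighborhood of $s_m$. So I would split the path into the ``near'' part, $|\gamma(y)-s_0| < \delta/\log B$, and the ``far'' part, where $\gamma(y)$ has moved a fixed multiple of $1/\log B$ away from $s_0$ but is still pinned between the lines $\theta = \pm\pi/2$.

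On the near part, the argument is essentially algebraic. Writing $s-s_0 = r\e^{\I\phi}$ we have $\arg f'(s) = \arg f''(s_0) + \phi + \arg(1+\tilde\lambda_0(s))$, and $\arg\overline{f'(\gamma(y))} = -\arg f'(\gamma(y))$. The steepest-descent condition forces $\arg(\gamma'(y))$ to equal $-\arg f''(s_0) - \phi - \arg(1+\tilde\lambda_0)$ modulo $\pi$; combined with $\Im\gamma$ increasing in $y$, this determines the correct branch. One computes $\arg f''(s_0)$ directly from the formula for $f''$ in Lemma \ref{lem: approx f}: since $\sigma_0 = 1 - \alpha\log_2 B/\log B + O(1/\log B)$ and $t_0 = \tau$, the term $B^{1-s_0}$ dominates $B^{(1-s_0)/2}$ and $1+\I\tau-s_0$ is essentially real and positive of size $\asymp \log_2 B/\log B$, so $f''(s_0) = (\log B)B^{1-s_0}/(4(1+\I\tau-s_0))\cdot(1+o(1))$ is, to leading order, a positive real multiple of $B^{-\I t_0\cdot 0}$... more precisely $B^{1-s_0}$ has modulus fixed by the saddle equation and argument $-t_0\log B\cdot\I$-free since $B^{1-s_0} = B^{1-\sigma_0}\e^{-\I t_0\log B}$; one checks $\arg f''(s_0) = O((\log_2 B)^{-1})$ by the same bookkeeping that gave \eqref{eq: approx log x}. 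Then choosing $\delta$ small in Lemma \ref{lem: approx f} makes $\arg(1+\tilde\lambda_0)$ as small as desired, and on the steepest-descent curve near $s_0$ the phase $\phi$ of $s - s_0$ is forced (by $\Im f = \Im f(s_0)$ and $f''(s_0)$ nearly real positive) to be near $\pm\pi/2$, giving $\arg\gamma' \approx \pi/2$ with a margin well inside $\pi/5$.

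On the far part of the path I would instead use the global description of $\Gamma_0$ already derived: every point satisfies \eqref{eq: steepest path s_{0}} with $\theta = (t-\tau)\log B \in [-\pi/2,\pi/2]$ and $\sigma_\theta = \sigma_0 - a_\theta/\log B$ with $|a_\theta| \ll 1$. Since the whole curve lives in a horizontal strip of height $\pi/\log B$ and a horizontal band of width $O(1/\log B)$, but its endpoints are separated vertically by exactly $\pi/\log B$, the path is ``mostly vertical'' in a crude sense; to upgrade this to a pointwise tangent bound I would differentiate the defining relation $F(\sigma,t) := \Im f(\sigma+\I t) - \Im f(s_0) = 0$ implicitly, getting $\dif\sigma/\dif t = -F_t/F_\sigma = -\Re f'(s)/\Im f'(s)$ (using the Cauchy--Riemann equations for the harmonic function $\Im f$), and then bounding $|\Re f'(s)/\Im f'(s)|$ from above. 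From \eqref{eq: saddle point}, $f'(s) = \log x - \tfrac14 B^{1-s}(1-B^{(s-1)/2})/(1+\I\tau-s)$; plugging in $\sigma = \sigma_\theta$, $t = \tau + \theta/\log B$ and using $B^{1-\sigma_0}/(4(1-\sigma_0)) = \log x(1+o(1))$ shows $f'(s) = \log x\,(1 - \e^{a_\theta}\e^{-\I\theta}/(\text{something})) \cdot(1+o(1))$; the point is that the imaginary part of $f'$ is $\asymp |\theta|\log x$ away from $\theta = 0$ while $\Re f'$ stays $O(\log x)$, and for $|\theta|$ bounded below the ratio is bounded — but near $\theta = 0$ one must be careful, and there the near-$s_0$ analysis of the previous paragraph takes over. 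Matching the two regimes (choosing the fixed $\theta$-threshold so that the near-region bound and the far-region bound overlap) yields the uniform constant $\pi/5$.

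\textbf{Main obstacle.} The delicate point is the transition regime: for $\theta$ small but with $\gamma(y)$ already outside the $\delta/\log B$-disk where Lemma \ref{lem: approx f} applies, both $\Re f'$ and $\Im f'$ are small and I need the right \emph{ratio}. This is exactly where the second-order behavior of $f$ — equivalently the precise value $\arg f''(s_0) = o(1)$ and the smallness of the cubic remainder — must be used to guarantee that the steepest-descent curve genuinely emanates from $s_0$ tangent to the vertical, rather than merely staying in a thin box. I expect the bulk of the work (and the reason the constant is the somewhat arbitrary-looking $\pi/5$ rather than, say, $\pi/6$) to be in making the near-region estimate quantitative enough to hand off cleanly to the far-region estimate \eqref{eq: a_{theta}}.
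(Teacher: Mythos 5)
Your decomposition and overall strategy match the paper's exactly: near $s_0$ you invoke Lemma~\ref{lem: approx f} to pin down the phase $\phi$ of $s-s_0$ from $\Im h = 0$, $\Re h < 0$, and far from $s_0$ you bound the tangent direction by controlling $\Re f'/\Im f'$ along the curve defined by $\Im f(s)=\Im f(s_0)$ (the paper works with $\Re\overline{f'}$ and $\Im\overline{f'}$ directly; the implicit-differentiation route you suggest is computationally identical via Cauchy--Riemann). Two remarks. First, your ``main obstacle'' is not really an obstacle: since $s-s_0 = -a_\theta/\log B + \I\theta/\log B$ with $|a_\theta| = |\log(\theta/\sin\theta)| + O((\log_2 B)^{-1}) \ll \theta^2 + (\log_2 B)^{-1}$ by \eqref{eq: a_{theta}}, the condition $|s-s_0| \ge \delta/\log B$ forces $|\theta| \ge \delta/2$, so the near-$s_0$ disk and the far region $|\theta| \in [\delta/2, \pi/2]$ already cover all of $\Gamma_0$ with no awkward hand-off. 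Second, you haven't located where $\pi/5$ actually comes from: plugging $\sigma = \sigma_0 - a_\theta/\log B$ and $t = \tau + \theta/\log B$ into the explicit formula for $f'$, one finds $\Im\overline{f'}(s) = -\log x\,(\theta + O((\log_2 B)^{-1}))$ and $\Re\overline{f'}(s) = \log x\,(1 - \theta\cot\theta + O((\log_2 B)^{-1}))$, so the tangent deviation from vertical is $\arctan(1/\theta - \cot\theta + O((\log_2 B)^{-1}))$, and the whole point is the elementary bound $|1/\theta - \cot\theta| < 2/\pi$ on $[-\pi/2,\pi/2]$ together with $\arctan(2/\pi) \approx 0.18\pi < \pi/5$. (Also, a small improvement on your bookkeeping: because $\tau\log B \in 4\pi\Z$ and $t_0 = \tau$, the quantity $f''(s_0)$ is \emph{exactly} real and positive, not just of small argument, which makes the near-$s_0$ sector argument cleaner.)
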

\begin{proof}
We consider two cases: the case where $s$ is sufficiently close to $s_{0}$ so that we can apply Lemma \ref{lem: approx f} to estimate the argument of $\overline{f'}$, and the remaining case, where we will estimate this argument via the definition of $f$.

We apply Lemma \ref{lem: approx f} with $\eps = 1/5$ to find a $\delta>0$ such that for $\abs{s-s_{0}} < \delta/\log B$, 
\[
	h(s) \coloneqq f(s)-f(s_{0}) = \frac{f''(s_{0})}{2}(s-s_{0})^{2}(1+\lambda_{0}(s)), \quad \abs{\lambda_{0}(s)} < \frac{1}{5}.
\]
Set $s-s_{0} = r\e^{\I\phi}$ with $r<\delta/\log B$ and $-\pi < \phi \le \pi$. Using that $f''(s_{0})$ is real and positive, we have
\begin{align*}
	\Re h(s)	&= \frac{f''(s_{0})}{2}r^{2}\bigl((1+\Re\lambda_{0}(s))\cos2\phi - (\Im\lambda_{0}(s))\sin2\phi\bigr)\\
	\Im h(s)	&= \frac{f''(s_{0})}{2}r^{2}\bigl((1+\Re\lambda_{0}(s))\sin2\phi + (\Im\lambda_{0}(s))\cos2\phi\bigr).
\end{align*}
Suppose $s\in \Gamma_{0}\setminus\{s_{0}\}$ with $\abs{s-s_{0}} < \delta/\log B$. Then $\Re h(s)<0$ and $\Im h(s)=0$. The condition $\Re h(s) < 0$ implies that $\phi\in (-4\pi/5, -\pi/5) \cup (\pi/5, 4\pi/5)$ say, as $\abs{\lambda_{0}(s)} < 1/5$. In combination with $\Im h(s) = 0$ this implies that $\phi\in (-3\pi/5,-2\pi/5) \cup (2\pi/5, 3\pi/5)$ whenever $s\in\Gamma_{0}\setminus\{s_{0}\}$, $\abs{s-s_{0}} < \delta/\log B$. Again by Lemma \ref{lem: approx f}, 
\[
	f'(s) = f''(s_{0})r\e^{\I\phi}(1+\tilde{\lambda}_{0}(s)), \quad \abs[1]{\tilde{\lambda}_{0}(s)} < \frac{1}{5}.
\]
It follows that $\abs[1]{\arg\bigl(\gamma'(y)\e^{-\I\pi/2}\bigr)} < \pi/5$ when $\abs{\gamma(y)-s_{0}} < \delta/\log B$.

It remains to treat the case $\abs{\gamma(y)-s_{0}} \ge \delta/\log B$. For these points, we have that $\delta/2 \le \abs{\theta} \le \pi/2$, where we used the notation $\theta = (\Im \gamma(y) -\tau)\log B$ as before. 
Set $\gamma(y) = s = \sigma+\I t$ with $\sigma=\sigma_{0} - a_{\theta}/\log B$. 
Recalling that $\tau \log B\in 4\pi \mathbb{Z}$, we obtain the following explicit expression for $\overline{f'}$:
\begin{multline*}
	\overline{f'}(s) = \log x - \frac{1/4}{(1-\sigma)^{2}+(t-\tau)^{2}}\biggl\{
	B^{1-\sigma}\biggl(\biggl((1-\sigma)\cos\theta + \frac{\theta\sin\theta}{\log B}\biggr) + \I\biggl((1-\sigma)\sin\theta - \frac{\theta\cos\theta}{\log B}\biggr)\biggr) \\
	-B^{(1-\sigma)/2}\biggl(\biggl((1-\sigma)\cos(\theta/2) + \frac{\theta\sin(\theta/2)}{\log B}\biggr) + \I\biggl((1-\sigma)\sin(\theta/2) - \frac{\theta\cos(\theta/2)}{\log B}\biggr)\biggr)\biggr\}.
\end{multline*}
Using \eqref{eq: approx log x} and \eqref{eq: a_{theta}}, we see that 
\begin{align*}
	\Im \overline{f'}(s) 	&= -\log x\bigl(\theta+O\bigl((\log_{2} B)^{-1}\bigr)\bigr)\textcolor{blue}{,}\\
	\Re \overline{f'}(s)	&= \log x\bigl(1 - \theta\cot\theta + O\bigl((\log_{2} B)^{-1}\bigr)\bigr). 
\end{align*}
This implies
\[
	\abs[1]{\arg\bigl(\gamma'(y)\e^{-\I\pi/2}\bigr)} = \abs[1]{\arctan\bigl(1/\theta - \cot\theta + O_{\delta}\bigl((\log_{2} B)^{-1}\bigr)\bigr)} < \frac{\pi}{5}.
\]
The last inequality follows from the fact that $\abs{1/\theta-\cot\theta} < 2/\pi$ for $\theta\in [-\pi/2, \pi/2]$, and that $\arctan(2/\pi) \approx 0.18\pi < \pi/5$.
\end{proof}

%%%%%%%%%%%%%%%%%%%%%%%%%%%%%%%%%%%%%%%%%%%%%%%%%%%%%%%%%%%%%%%%%%%%%%%%%%%%%
\subsection{The contribution from $s_{0}$}
\label{subsection: contribution from s_0}
We will now estimate the contribution from $s_{0}$, by which we mean
\[
	\frac{1}{\pi}\Im\int_{\Gamma_{0}}\e^{f(s)}g(s)\dif s,
\]
and where $f$ and $g$ are given by \eqref{eq: def f} and \eqref{eq: def g} respectively. We have combined the two pieces in the upper and lower half plane $\int_{\Gamma_{0}}$ and $-\int_{\overline{\Gamma_{0}}}$ into one integral using $\zeta_{C}(\overline{s}) = \overline{\zeta_{C}(s)}$. 
To estimate this
integral, we will use the following simple lemma
(see e.g. \cite[Lemma 3.3]{B-D-V2020}).
\begin{lemma}
\label{lem: lower bound integral}
Let $a<b$ and suppose that $F: [a,b] \to \C$ is integrable. If there exist $\theta_{0}$ and $\omega$ with $0\le \omega < \pi/2$ such that $\abs{\arg(F\e^{-\I\theta_{0}})} \le \omega$, then
\[
	\int_{a}^{b}F(u)\dif u = \rho\e^{\I(\theta_{0}+\vphi)}
\]
for some real numbers $\rho$ and $\vphi$ satisfying
\[
	\rho \ge (\cos \omega)\int_{a}^{b}\abs{F(u)}\dif u \quad \mbox{and} \quad \abs{\vphi} \le \omega.
\]
\end{lemma}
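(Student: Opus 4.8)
The plan is to reduce everything to the elementary fact that a complex number whose argument lies in a cone of half-angle $\omega<\pi/2$ has positive real part comparable to its modulus. Put $G(u)=F(u)\e^{-\I\theta_{0}}$, so that the hypothesis says $\abs{\arg G(u)}\le\omega$ at every $u$ with $F(u)\neq0$, while at the remaining points $G(u)=0$ and contributes nothing to any of the integrals below. If $\int_{a}^{b}\abs{F(u)}\dif u=0$ the assertion is trivial (take $\rho=\vphi=0$), so we may assume this quantity is positive.

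First I would record two pointwise inequalities valid for all $u$: since $\abs{\arg G(u)}\le\omega<\pi/2$,
\[
	\Re G(u)=\abs{G(u)}\cos(\arg G(u))\ge(\cos\omega)\abs{G(u)}\ge0,
\]
and
\[
	\abs{\Im G(u)}=\abs{G(u)}\abs{\sin(\arg G(u))}\le(\tan\omega)\,\Re G(u).
\]
Integrating the first inequality over $[a,b]$ yields $\Re\int_{a}^{b}G(u)\dif u\ge(\cos\omega)\int_{a}^{b}\abs{F(u)}\dif u>0$, and integrating the second together with the triangle inequality $\abs[1]{\int_{a}^{b}\Im G(u)\dif u}\le\int_{a}^{b}\abs{\Im G(u)}\dif u$ gives $\abs[1]{\Im\int_{a}^{b}G(u)\dif u}\le(\tan\omega)\,\Re\int_{a}^{b}G(u)\dif u$.

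Next, write $\int_{a}^{b}G(u)\dif u=\rho\e^{\I\vphi}$ in polar form with $\rho=\abs[1]{\int_{a}^{b}G(u)\dif u}\ge0$; since the real part of this integral is positive, the branch of the argument may be chosen so that $\vphi\in(-\pi/2,\pi/2)$. The first bound from the previous paragraph then reads $\rho\ge\Re\int_{a}^{b}G(u)\dif u\ge(\cos\omega)\int_{a}^{b}\abs{F(u)}\dif u$. For the second, from $\rho\cos\vphi=\Re\int_{a}^{b}G(u)\dif u>0$ and $\abs{\rho\sin\vphi}=\abs[1]{\Im\int_{a}^{b}G(u)\dif u}\le(\tan\omega)\,\rho\cos\vphi$ we obtain $\abs{\tan\vphi}\le\tan\omega$, hence $\abs{\vphi}\le\omega$. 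Finally, multiplying back by $\e^{\I\theta_{0}}$ gives $\int_{a}^{b}F(u)\dif u=\e^{\I\theta_{0}}\int_{a}^{b}G(u)\dif u=\rho\e^{\I(\theta_{0}+\vphi)}$ with the stated properties of $\rho$ and $\vphi$.

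There is no substantial obstacle in this argument; the only mild subtleties are the bookkeeping on the set where $F$ vanishes (harmless, as such points contribute nothing to either $\int F$ or $\int\abs{F}$) and pinning down the branch of the argument in the polar decomposition, which is handled by the positivity of $\Re\int_{a}^{b}G(u)\dif u$. An alternative phrasing would project $G(u)$ onto the real axis and invoke $\Re\int_{a}^{b}G\le\abs[1]{\int_{a}^{b}G}$ directly, but the two cone inequalities above make the bounds on $\rho$ and on $\vphi$ simultaneously transparent.
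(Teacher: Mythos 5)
Your proof is correct. The paper itself gives no proof of this lemma, citing \cite[Lemma 3.3]{B-D-V2020} instead, so there is no in-paper argument to compare against; your reduction to the two pointwise cone inequalities for $G=F\e^{-\I\theta_0}$ (namely $\Re G\ge(\cos\omega)\abs{G}$ and $\abs{\Im G}\le(\tan\omega)\Re G$), followed by integration and a polar-form readoff, is exactly the standard argument and handles the edge cases (vanishing $F$, branch of the argument) cleanly.
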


We will estimate $g$ with the following lemma.
\begin{lemma}
\label{lem: bound g}
Let $\eps>0$ and suppose that $s=\sigma+\I t$ satisfies 
\[
	\sigma \ge 1 - O\left(\frac{\log_{2}B_{K}}{\log B_{K}} \right), \quad t \gg \tau_{K},
\]
Then for $K (> K(\eps))$ sufficiently large,
\[
	\abs{\sum_{k=0}^{K-1}\int_{s}^{s+1}\bigl(\eta_{k}(z) + \tilde{\eta}_{k}(z) + \xi_{k}(z)\bigr)\dif z + \int_{s}^{s+1}\bigl(\tilde{\eta}_{K}(z)+\xi_{K}(z)\bigr)\dif z - \int_{s+1}^{\infty}\eta_{K}(z)\dif z} < \eps. 
\]
\end{lemma}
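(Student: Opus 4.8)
The plan is to estimate each of the three kinds of contributions separately and show that each is $o(1)$ as $K\to\infty$, uniformly in the stated region for $s$. Throughout I will use the elementary bounds on the basic building blocks $\eta_k,\tilde\eta_k,\xi_k$ that follow directly from the definitions in \eqref{eq: def eta}, together with the Mellin-transform representation \eqref{eq: formula int eta} (and its analogues for $\tilde\eta_k$ and $\xi_k$), which express $\int_s^{s+1}(\cdots)\dif z$ as an integral of $x^{-z}$ against a bounded density supported on $[A_k,B_k]$ or $[B_k,C_k]$. Since $\Re z \ge \sigma \ge 1-O(\log_2 B_K/\log B_K)$ on the segment $[s,s+1]$, one has $|x^{-z}|\le x^{-\sigma}\le A_k^{-\sigma}$ for $x\ge A_k$, which decays like a small negative power of $A_k$ once $k$ is bounded away from $K$; this is the source of the smallness.

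First I would handle the tail sum $\sum_{k=0}^{K-1}\int_s^{s+1}(\eta_k+\tilde\eta_k+\xi_k)\dif z$. For each $k\le K-1$ we have $A_k=\sqrt{B_k}$, $C_k\asymp B_k$, and $|1\pm\I\tau_k - z|\ge |t-\tau_k|\gg \tau_{k}$ is not available in general (the $t\gg\tau_K$ hypothesis only controls the $k=K$ term), so instead I bound $|\eta_k(z)|,|\tilde\eta_k(z)|\le \tfrac14 B_k^{1-\sigma}/|1\pm\I\tau_k-z|$ and $|\xi_k(z)|\le \tfrac12 (C_k-B_k)B_k^{-\sigma}\cdot|1-z|^{-1}\cdot(\text{something bounded})$ using \eqref{eq: approx C}; the crude estimate $B_k^{1-\sigma}=B_k\cdot B_k^{-\sigma}$ combined with $B_k^{-\sigma}\le B_k^{-1+O(\log_2B_K/\log B_K)}$ and, crucially, the rapid growth property \ref{Property (a)} (choosing $F$ so that $B_{k+1}$ dominates any fixed power of $B_k$ and so that $\sum_k$ of the resulting bound telescopes to something $\ll B_{K-1}^{-\delta}$) gives that the whole sum over $k\le K-1$ is $\ll B_{K-1}^{-\delta}\to 0$. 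The point is that we are free to impose whatever growth on $(B_k)_k$ is needed here, so this step reduces to bookkeeping once the per-term bound is written down. The term $\int_{s+1}^{\infty}\eta_K(z)\dif z$ is similar but easier: by \eqref{eq: formula int eta} it equals $\tfrac14\int_{1/2}^{1}B_K^{(1+\I\tau_K-s-1)u}u^{-1}\dif u$, and since $\Re(1+\I\tau_K-(s+1))=-\sigma<-1/2$ for large $K$, the integrand is $\le B_K^{-u/2}$, giving a bound $\ll B_K^{-1/4}\to 0$.

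The genuinely delicate piece is $\int_s^{s+1}(\tilde\eta_K(z)+\xi_K(z))\dif z$, the $k=K$ contribution that is \emph{not} absorbed into $f$. Here $A_K=\sqrt{B_K}$ and $\sigma$ is only barely below $1$, so $B_K^{1-\sigma}=\exp((1-\sigma)\log B_K)$ is of size roughly $\exp(O(\log_2 B_K))=(\log B_K)^{O(1)}$, which is \emph{not} small — the decay must come entirely from the denominators. For $\tilde\eta_K(z)=(B_K^{1-z}-A_K^{1-z})/(4(1-\I\tau_K-z))$ we use that on $[s,s+1]$ the imaginary part of $z$ is $t+O(1)\gg \tau_K$ by hypothesis, whereas the relevant pole sits at $\Im = -\tau_K$; hence $|1-\I\tau_K-z|\ge |t+\tau_K|+O(1)\gg \tau_K$, and since $\tau_K=\exp(c(\log B_K)^\alpha)$ grows faster than any power of $\log B_K$, the ratio $B_K^{1-\sigma}/\tau_K$ is $o(1)$. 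This is where the assumption $t\gg\tau_K$ is essential. For $\xi_K(z)=(B_K^{1-z}-C_K^{1-z})/(2(1-z))$ I would instead exploit the near-cancellation $B_K^{1-z}-C_K^{1-z}$: since $C_K=B_K(1+O(\tau_K^{-2}))$ by \eqref{eq: approx C}, we get $B_K^{1-z}-C_K^{1-z}=B_K^{1-z}\bigl(1-(C_K/B_K)^{1-z}\bigr)=B_K^{1-z}\cdot O((1-z)\tau_K^{-2})$, so $|\xi_K(z)|\ll B_K^{1-\sigma}\tau_K^{-2}$, which is again $o(1)$; note the factor $(1-z)$ cancels the denominator, so no lower bound on $|1-z|$ is needed. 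Summing the three $o(1)$ estimates and choosing $K$ large enough that each is below $\eps/3$ completes the proof. The main obstacle, as indicated, is keeping honest track of the size of $B_K^{1-\sigma}$ in the $k=K$ term and seeing that it is killed by the $\tau_K^{-1}$ (resp.\ $\tau_K^{-2}$) savings coming from the hypothesis $t\gg\tau_K$ and from \eqref{eq: approx C}; everything else is controlled by the freedom in \ref{Property (a)}.
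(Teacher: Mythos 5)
The handling of the tail sum $\sum_{k=0}^{K-1}$ contains a genuine gap, traceable to an incorrect claim. You assert that the lower bound $|1\pm\I\tau_{k}-z|\gg\tau_{k}$ (or anything similar) is ``not available in general'' for $k<K$ because the hypothesis only controls $t\gg\tau_{K}$. But it \emph{is} available, and is in fact the crux of the matter: since $t\gg\tau_{K}$ and, by the rapid growth of $(B_{k})_{k}$ (hence of $(\tau_{k})_{k}$, property \ref{Property (a)}), $\tau_{k}\le\tau_{K-1}\le\tau_{K}/2$ say for all $k<K$ once $K$ is large, one has $|\Im(1\pm\I\tau_{k}-z)|=|t\mp\tau_{k}|\ge t-\tau_{K-1}\gg\tau_{K}$. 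This is exactly the bound the paper uses. Having discarded it, your substitute---estimating $B_{k}^{1-\sigma}$ and invoking rapid growth to ``telescope'' to $\ll B_{K-1}^{-\delta}$---does not work. The numerator $B_{k}^{1-\sigma}$ is $\ge1$ whenever $\sigma\le1$ (which is the interesting range on the steepest-descent contour), so each summand is $\gg1$ in the worst case; making $B_{k+1}$ dominate any power of $B_{k}$ does not create per-term smallness, it only affects how quickly $\log B_{k}/\log B_{K}\to0$. There is simply no source of decay in your $k<K$ bound unless you reinstate the factor $1/\tau_{K}$ coming from the denominator, and then combine it with the fact that $K$ itself is tiny compared to $\tau_{K}=\exp(c(\log B_{K})^{\alpha})$ by property \ref{Property (a)} (taking $B_{K}>G(K)$ with $G$ rapidly growing).

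The same issue affects your treatment of the $\xi_{k}$ terms. Your cancellation observation $B_{k}^{1-z}-C_{k}^{1-z}=-(1-z)\int_{B_{k}}^{C_{k}}x^{-z}\,\dif x$ is correct and gives $|\xi_{k}(z)|\ll(C_{k}-B_{k})B_{k}^{-\sigma}\ll B_{k}^{1-\sigma}/\tau_{k}^{2}$, but for small fixed $k$ (say $k=0$) this is a constant, not $o(1)$, and the sum over $k$ is therefore $O(1)$ rather than $<\eps$. Again the fix is to \emph{not} cancel the $1/(1-z)$ factor but instead use $|1-z|\gg t\gg\tau_{K}$ uniformly, as the paper does, giving $\sum_{k\le K}|\int_{s}^{s+1}\xi_{k}|\ll K(\log B_{K})^{O(1)}/\tau_{K}$. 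A further small point: your Taylor expansion $1-(C_{K}/B_{K})^{1-z}=O((1-z)\tau_{K}^{-2})$ requires $|1-z|\log(C_{K}/B_{K})$ to be bounded, i.e.\ $|1-z|\ll\tau_{K}^{2}$; for larger $|\Im z|$ you would need the direct bound $|\xi_{K}(z)|\ll B_{K}^{1-\sigma}/|1-z|$, so the argument as written is incomplete even for the single term $k=K$. Your estimates for $\tilde\eta_{K}$ and for $\int_{s+1}^{\infty}\eta_{K}$ are fine.
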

\begin{proof}
By the definition \eqref{eq: def eta} of the functions $\eta_{k}$, $\tilde{\eta}_{k}$, and $\xi_{k}$, we have
\[
	\sum_{k=0}^{K}\int_{s}^{s+1}\xi_{k}(z)\dif z \ll \sum_{k=0}^{K}\frac{C_{k}^{1-\sigma}}{\abs{s}\log C_{k}} \ll K\frac{(\log
	 B_{K})^{O(1)}}{\tau_{K}},
\]
where in the last step we used that $C_{K} \asymp B_{K}$ by \eqref{eq: approx C}. This quantity is bounded by $\exp\bigl(\log K - c(\log B_{K})^{\alpha} + O(\log_{2}B_{K})\bigr)$, which can be made arbitrarily small by taking $K$ sufficiently large, due to the rapid growth of $(B_{k})_{k}$ (property \ref{Property (a)}). The condition $t \gg \tau_{K}$ together with the rapid growth of $(\tau_{k})_{k}$ implies that $\abs{1
\pm
\I\tau_{k} - s} \gg \tau_{K}$, for $0\le k \le K-1$ (at least when $K$ is sufficiently large). Hence,
\[
	\sum_{k=0}^{K-1}\int_{s}^{s+1}(\eta_{k}(z)+\tilde{\eta}_{k}(z))\dif z \ll \sum_{k=0}^{K-1}\frac{B_{k}^{1-\sigma}}{\tau_{K}\log B_{k}} \ll \exp\bigl(\log K - c(\log B_{K})^{\alpha} + O(\log_{2}B_{K})\bigr).
\]
Finally we have
\begin{align*}
	\int_{s}^{s+1}\tilde{\eta}_{K}(z)\dif z 	&\ll \frac{B_{K}^{1-\sigma}}{\tau_{K}\log B_{K}} = \exp\bigl(-c(\log B_{K})^{\alpha} + O(\log_{2}B_{K})\bigr), \\
	\int_{s+1}^{\infty}\eta_{K}(z)\dif z 	&\ll \frac{B_{K}^{-\sigma}}{\log B_{K}}. \qedhere
\end{align*}
\end{proof}
In particular we may assume that on the contour $\Gamma_{0}$, these terms are in absolute value smaller than $\pi/40$\textcolor{blue}{,} say. Also, $1/\abs{s-1} \sim 1/\tau_{K}$ and $\abs[1]{\arg\bigl(\e^{\I\pi/2}/(s-1)\bigr)} < \pi/40$ on $\Gamma_{0}$. We have
\[
	\int_{\Gamma_{0}}\e^{f(s)}g(s)\dif s = \e^{f(s_{0})}\int_{\Gamma_{0}}\e^{f(s)-f(s_{0})}g(s)\dif s.
\]
We now apply Lemma \ref{lem: lower bound integral} to estimate the size and argument of this integral. By Property \ref{Property (c)} and Lemma \ref{lem: bound arg gamma} we get that
\begin{gather*}
	\int_{\Gamma_{0}}\e^{f(s)}g(s)\dif s = (-1)^{K}R\e^{\I(\pi/2 + \vphi)}, \\
	R \gg \frac{\e^{\Re f(s_{0})}}{\tau_{K}}\int_{y^{-}}^{y^{+}}\exp\bigl( f(\gamma(y))-f(s_{0})\bigr)\dif y, \quad \abs{\vphi} < \frac{\pi}{5} + \frac{\pi}{40} + \frac{\pi}{40} = \frac{\pi}{4}.
\end{gather*}
Note that $f(\gamma(y))-f(s_{0})$ is real. In order to bound
 the remaining integral from below, we restrict the range of integration to the points $s=\gamma(y)$ in the disk $B(s_{0}, \delta/\log B)$, so that we may approximate $f$  by means of
  Lemma \ref{lem: approx f}. We have
\[
	f(\gamma(y))-f(s_{0}) = \frac{f''(s_{0})}{2}(\gamma(y)-s_{0})^{2}(1+\lambda_{0}(\gamma(y))).
\]
Now $f''(s_{0})$ is real and $f''(s_{0}) = \log B\log x\bigl(1+O((\log_{2}B)^{-1})\bigr)$ and 
\[
	(\gamma(y)-s_{0})^{2}(1+\lambda_{0}(\gamma(y))) = - \abs{\gamma(y)-s_{0}}^{2}\abs{1+\lambda_{0}(\gamma(y))} \ge -2y^{2},
\]
if we take a value for $\delta$ provided by Lemma \ref{lem: approx f} corresponding to the choice $\eps=1$ say. Hence
the integral $\int_{y^{-}}^{y^{+}}\exp\bigl( f(\gamma(y))-f(s_{0})\bigr)\dif y$
is bounded from below by 
\[
	\int_{-\delta/\log B}^{\delta/\log B}\exp\bigl(-2(\log B\log x) y^{2}\bigr)\dif y \gg_{\delta} \min\biggl(\frac{1}{\log B}, \frac{1}{\sqrt{\log B\log x}}\biggr) = \frac{1}{\sqrt{\log B\log x}}.
\]
We conclude that the contribution from $s_{0}$ has sign $(-1)^{K}$ and has absolute value bounded from below by
\begin{equation}
\label{eq: contribution s_{0}}
	\frac{x}{\tau}\exp\biggl(-(1-\sigma_{0})\log x + \int_{s_{0}}^{\infty}\eta(z)\dif z + O(\log_{2}x)\biggr).
\end{equation}
Let us now estimate $\int_{s}^{\infty}\eta(z)\dif z$.
 We use the representation \eqref{eq: formula int eta} and integrate by parts three times,
\begin{align}
	\int_{s}^{\infty}\eta(z)\dif z	&= \frac{B^{1+\I\tau-s}-2B^{(1+\I\tau-s)/2}}{4(1+\I\tau -s)\log B} + \frac{B^{1+\I\tau-s}-4B^{(1+\I\tau-s)/2}}{4((1+\I\tau -s)\log B)^{2}} \nonumber \\
						&+ \frac{B^{1+\I\tau-s}-8B^{(1+\I\tau-s)/2}}{2((1+\I\tau -s)\log B)^{3}} + \frac{3}{2((1+\I\tau -s )\log B)^{3}}\int_{1/2}^{1}\frac{B^{(1+\I\tau-s)u}}{u^{4}}\dif u 
						\label{eq: estimate int eta}.
\end{align}
Although we 
did not have to perform partial integration to obtain the contribution \eqref{eq: contribution s_{0} explicit} from $s_{0}$ below, we shall require these finer estimates for $\int^{\infty}_{s} \eta(z) \dif z$ later on.
For $s=s_{0}$ we get
\begin{align}
	\int_{s_{0}}^{\infty}\eta(z)\dif z	
		&= \frac{B^{1-\sigma_{0}}}{4(1-\sigma_{0})}\frac{1}{\log B} + \frac{B^{1-\sigma_{0}}}{4(1-\sigma_{0})}\frac{1}{(1-\sigma_{0})(\log B)^{2}}  \nonumber \\
		&+ \frac{B^{1-\sigma_{0}}}{4(1-\sigma_{0})}\frac{2}{(1-\sigma_{0})^{2}(\log B)^{3}} + O\biggl(\frac{B^{1-\sigma_{0}}}{1-\sigma_{0}}\frac{1}{(1-\sigma_{0})^{3}(\log B)^{4}}\biggr) \nonumber \\
		&= \frac{\log x}{\log B}\biggl(1+\frac{1}{(1-\sigma_{0})\log B}+\frac{2}{((1-\sigma_{0})\log B)^{2}} + O\biggl(\frac{1}{((1-\sigma_{0})\log B)^{3}}\biggr)\biggr), \label{eq: estimate int eta at s_{0}}
\end{align}
where we have used \eqref{eq: approx log x}. Combining the above with the estimate \eqref{eq: approx sigma} for $\sigma_{0}$ and the relations \eqref{eq: def tau} and \eqref{eq: def x} between $\tau$ and $B$, and $x$ and $B$ respectively, we get that the contribution from $s_{0}$ has absolute value which is bounded from below by
\begin{equation}
\label{eq: contribution s_{0} explicit}
	x\exp\biggl\{-(c(\alpha+1))^{\frac{1}{\alpha+1}}(\log x\log_{2}x)^{\frac{\alpha}{\alpha+1}}\biggl(1+\frac{\alpha}{\alpha+1}\frac{\log_{3}x}{\log_{2}x}+ O\biggl(\frac{1}{\log_{2}x}\biggr)\biggr)\biggr\}.
\end{equation}

%%%%%%%%%%%%%%%%%%%%%%%%%%%%%%%%%%%%%%%%%%%%%%%%%%%%%%%%%%%%%%%%%%%%%%%%%%%%%
\subsection{The steepest paths through $s_{m}$, $m\neq0$.}
We now consider the contributions from the other saddle points. In this case by such contributions we mean
\[
	\frac{1}{\pi}\Im\int_{\Gamma_{m}}\e^{f(s)}g(s)\dif s,
\]
where $\Gamma_{m}$ is some contour which connects the two horizontal lines $t=t_{m}^{-}$ and $t=t_{m}^{+}$. This contribution will be of lower order than 
that of $s_{0}$. We shall again use the method of steepest descent; just taking some simple choice for $\Gamma_{m}$ (e.g.\ a vertical line segment) and estimating the integral via the triangle inequality appears to be insufficient for small $m$. We consider $\abs{m}\le M \coloneqq \lfloor(\log_{2}B)^{3/4}\rfloor$. The part of the Perron integral where $t<t_{-M}^{-}$ or $t>t_{M}^{+}$ can be estimated without appealing to the saddle point method, and this will be done in the next section.

We want to show that we can connect the two lines $t=t_{m}^{-}$ and $t=t_{m}^{+}$ with the path of steepest decent through $s_{m}$. We first consider the steepest path in a small neighborhood of $s_{m}$. By applying Lemma \ref{lem: approx f} with $\eps=1/5$, we find some $\delta' > 0$ (independent of $K$ and $m$) such that 
\[
	f(s) - f(s_{m}) = \frac{f''(s_{m})}{2}(s-s_{m})^{2}(1+\lambda_{m}(s)) \eqqcolon (\psi_{m}(s))^{2}, 
\]
where $\abs{\lambda_{m}(s)} < 1/5$ for $s\in B(s_{m}, \delta'/\log B)$, and where $\psi_{m}$ is a holomorphic bijection of $B(s_{m}, \delta'/\log B)$ onto some neighborhood $U$ of $0$. The path of steepest descent $\Gamma_{m}$ in $B(s_{m}, \delta'/\log B)$ is the inverse image under $\psi_{m}$ of the curve $\{ z\in U: \Re z=0\}$. Since $f''(s_{m}) = \log B\log x\bigl(1+O((\log_{2}B)^{-1})\bigr)$ (which follows from \eqref{eq: approx log x}), we have that 
\[ 	
	\Re\bigl( f(s) - f(s_{m}) \bigr) = \frac{\abs{f''(s_{m})}}{2}r^{2}\bigl((1+\Re\lambda_{m}(s))\cos2\phi - (\Im\lambda_{m}(s))\sin2\phi + O((\log_{2}B)^{-1})\bigr),
\]
where we have set $s-s_{m}=r\e^{\I\phi}$. Points $s \in \Gamma_{m}\setminus\{s_{m}\}$ satisfy $\Re\bigl( f(s) - f(s_{m}) \bigr) < 0$, and since $\abs{\lambda_{m}(s)} < 1/5$, it follows from the above equation that such points lie in the union of the sectors $\phi \in (\pi/5, 4\pi/5) \cup (-\pi/5, -4\pi/5)$, say. We have that $\Gamma_{m}\setminus\{s_{m}\}$ is the union of two curves $\Gamma_{m}^{+}$ and $\Gamma_{m}^{-}$ where $\Gamma_{m}^{+}$ lies in the sector $\phi \in (\pi/5, 4\pi/5)$, and $\Gamma_{m}^{-}$ lies in the sector $\phi \in (-\pi/5, -4\pi/5)$. (It is impossible that both pieces lie in the same sector, since the angle between $\Gamma_{m}^{+}$ and $\Gamma_{m}^{-}$ at $s_{m}$ equals $\pi$, as $\psi_{m}^{-1}$ is 
conformal.)
Both $\Gamma_{m}^{+}$ and $\Gamma_{m}^{-}$ intersect the circle $\partial B(s_{m}, \delta'/(2\log B))$, which can be seen from the fact that $\psi_{m}(\Gamma_{m}^{+})$ and $\psi_{m}(\Gamma_{m}^{-})$ both intersect the closed curve $\psi_{m}(\partial B(s_{m}, \delta'/(2\log B)))$. 
From this it follows that the path of steepest descent $\Gamma_{m}$ connects the lines $t=t_{m}-\delta/\log B$ and $t=t_{m}+\delta/\log B$, where $\delta=(\delta'/2)\sin(\pi/5)$. 
Since $f'(s) = f''(s_{m})(s-s_{m})(1+\tilde{\lambda}_{m}(s))$, with also $\abs[1]{\tilde{\lambda}_{m}(s)} < 1/5$, it follows that $\arg f'(s) \in (\pi/10, 9\pi/10)$ if $\phi\in(\pi/5, 4\pi/5)$, and $\arg f'(s) \in (-9\pi/10, -\pi/10)$ if $\phi\in(-4\pi/5, -\pi/5)$. This implies that the tangent vector of $\Gamma_{m}$ has argument contained in $(\pi/10, 9\pi/10)$ (when $\Gamma_{m}$ is parametrized in such a way that we move in the upward direction). From this it follows that the length of $\Gamma_{m}$ in the neighborhood $B(s_{m}, \delta'/(2\log B))$ is bounded by $O(\delta/\log B)$.

For the continuation of $\Gamma_{m}$ outside this neighborhood of $s_{m}$, we argue as follows. We again set $\theta=(t-\tau)\log B$, and we consider the range 
\begin{equation}
\label{eq: range theta}
	\theta \in [2\pi m - \pi/2, 2\pi m + \pi/2] \setminus [2\pi m -\delta/2, 2\pi m + \delta/2].
\end{equation}
The equation for the steepest paths through $s_{m}$, $\Im f(s) = \Im f(s_{m})$, gives 
\[
	t_{m}\log x - \frac{1}{4}\int_{1/2}^{1}B^{(1-\sigma_{m})u}\sin\bigl((t_{m}-\tau)(\log B)u\bigr)\frac{\dif u}{u} = t\log x - \frac{1}{4}\int_{1/2}^{1}B^{(1-\sigma)u}\sin(\theta u)\frac{\dif u}{u}, 
\]
which is equivalent to
\begin{equation}
\label{eq: steepest path s_{m}}
	(t-t_{m})\log x + \frac{1}{4}\int_{1/2}^{1}B^{(1-\sigma_{m})u}\sin\bigl((t_{m}-\tau)(\log B)u\bigr)\frac{\dif u}{u} = \frac{1}{4}\int_{1/2}^{1}B^{(1-\sigma)u}\sin(\theta u)\frac{\dif u}{u}. 
\end{equation}
Also the points on the path of steepest \emph{ascent} satisfy this equation, but we will show that for fixed $\theta$ in the range \eqref{eq: range theta}, the above equation has a unique solution for $\sigma$ (in a sufficiently large range for $\sigma$ that contains $\sigma_{m}$). These solutions necessarily form the continuation of the path of steepest descent in the neighborhood $B(s_{m}, \delta'/(2\log B))$.

We consider $\theta$ in the range \eqref{eq: range theta} fixed (so also $t$ is fixed). We have $\sin\theta \gg_{\delta} 1$. The right hand side of \eqref{eq: steepest path s_{m}} is a monotone function of $\sigma$ for $\sigma$ in the range $\sigma= 1-\alpha\bigl(\log_{2}B + O(1)\bigr)/\log B$:
\begin{align*}
	\dpd{\RHS}{\sigma}	&= -\frac{1}{4}\int_{1/2}^{1}B^{(1-\sigma)u}\log B\sin(\theta u)\dif u \\
					&=-\frac{1}{4}\frac{B^{1-\sigma}}{(1-\sigma)^{2}+(\theta/\log B)^{2}}\biggl((1-\sigma)\sin\theta - \frac{\theta\cos\theta}{\log B}\biggr)\bigl(1+O_{\delta}(B^{(\sigma-1)/2})\bigr).
\end{align*}
Since $\abs{\theta} \ll (\log_{2}B)^{3/4}$, this indeed has a fixed sign in the aforementioned range. By setting $\sigma=\sigma_{m}-a/\log B$ for some large positive and negative values of $a$, one can conclude that \eqref{eq: steepest path s_{m}} has a unique solution. Indeed, integrating by parts gives
\begin{align*}
	\LHS		&= (t-t_{m})\log x + O\biggl(\frac{\log x}{\log B}\frac{\abs{m}}{\log_{2}B}\biggr), \\
	\RHS	&= \e^{a}\frac{\log x}{\log B}\sin\theta\biggl(1+O_{\delta}\biggl(\frac{\abs{m}}{\log_{2}B}\biggr)\biggr).
\end{align*}
Here we used that 
\[
	\sin((t_{m}-\tau)\log B) \ll \frac{\abs{m}}{\log_{2}B}, \quad \frac{B^{1-\sigma}}{4(1-\sigma)} = \e^{a}\log x\biggl(1+O\biggl(\frac{\abs{m}}{\log_{2}B}\biggr)\biggr),
\]
by \eqref{eq: approx t} and \eqref{eq: approx log x}, \eqref{eq: approx sigma}, \eqref{eq: approx t} respectively.
Since $t-t_{m} = (\theta-2\pi m)/\log B + O(\,\abs{m}/(\log B\log_{2}B))$ by \eqref{eq: approx t}, it follows that $\LHS \lessgtr \RHS$ if $a$ is sufficiently large, resp.\ small. This shows that we can connect the lines $t=t_{m}^{-}$ and $t=t_{m}^{+}$ with the path of steepest descent $\Gamma_{m}$.

Denoting the solutions of \eqref{eq: steepest path s_{m}} for $\sigma$ at $\theta = 2\pi m \pm \pi/2$ by $\sigma_{m}^{\pm}$, and setting $\sigma_{m}^{\pm} = \sigma_{m} - a_{m}^{\pm}/\log B$, the above calculations also show that 
\begin{equation}
\label{eq: approx sigma_{m}^{pm}}
	a_{m}^{\pm} = \log\frac{\pi}{2} + O\biggl(\frac{\abs{m}}{\log_{2}B}\biggr), \quad \mbox{so} \quad \sigma_{m}^{\pm} = \sigma_{m} - \frac{\log(\pi/2)}{\log B} + O\bigl((\log B)^{-1}(\log_{2}B)^{-1/4}\bigr). 
\end{equation}

Finally we need that the length of $\Gamma_{m}$ is not too large. For the part inside the neighborhood $B(s_{m}, \delta'/(2\log B))$, this was already remarked at the beginning of this subsection. Outside
 this neighborhood, we use that $\pd{}{\sigma}\RHS \gg_{\delta} \log x$, $\pd{}{\theta}\RHS \ll \log x/\log B$ and $\pd{}{\theta}\LHS = \log x/\log B$, so that $\od{}{\theta}\sigma(\theta) \ll_{\delta} 1/\log B$. This implies that $\length(\Gamma_{m}) \ll 1/\log B$.

%%%%%%%%%%%%%%%%%%%%%%%%%%%%%%%%%%%%%%%%%%%%%%%%%%%%%%%%%%%%%%%%%%%%%%%%%%%%%
\subsection{The contributions from $s_{m}$, $m\neq0$}
\label{subsection: contributions from s_m}
On the path of steepest descent $\Gamma_{m}$, $\Re f$ reaches its maximum at $s_{m}$. This together with Lemma \ref{lem: bound g} implies the following bound for the contribution of $s_{m}$, $m\neq0$:
\[
	\Im \frac{1}{\pi}\int_{\Gamma_{m}}\e^{f(s)}g(s)\dif s \ll \frac{x}{\tau}\exp\biggl(-(1-\sigma_{m})\log x + \Re \int_{s_{m}}^{\infty}\eta(z)\dif z\biggr) \length(\Gamma_{m}).
\]
Using \eqref{eq: estimate int eta}, \eqref{eq: approx log x}, the inequality $\abs{1+\I\tau-s_{m}} > 1-\sigma_{0}$, and \eqref{eq: estimate int eta at s_{0}}, we get 
\begin{align*}
	\Re\int_{s_{m}}^{\infty}\eta(z)\dif z	&\le \frac{\log x}{\log B}\biggl(1+\frac{1}{\abs{1+\I\tau -s_{m}}\log B} 
										+ \frac{2}{(\,\abs{1+\I\tau -s_{m}}\log B)^{2}} + O\biggl(\frac{1}{(\,\abs{1+\I\tau -s_{m}}\log B)^{3}}\biggr)\biggr) \\
								&\le \int_{s_{0}}^{\infty}\eta(z)\dif z + O\biggl(\frac{\log x}{(\log B)(\log_{2}B)^{3}}\biggr).
\end{align*}
Combining this with Lemma \ref{lem: finer estimate sigma}, we see that the contribution of $s_{m}$ is bounded by 
\[
	\frac{x}{\tau}\exp\biggl(-(1-\sigma_{0})\log x + \int_{s_{0}}^{\infty}\eta(z)\dif z - d\frac{\log x}{\log B(\log_{2}B)^{2}} + O\biggl(\frac{\log x}{\log B(\log_{2}B)^{3}}\biggr)\biggr).
\]
Since 
\[
	\frac{\log x}{\log B(\log_{2}B)^{2}} \asymp \frac{(\log x)^{\frac{\alpha}{\alpha+1}}}{(\log_{2}x)^{\frac{2\alpha+3}{\alpha+1}}}
\]
tends to infinity, this is of strictly lower order than the contribution of $s_{0}$, \eqref{eq: contribution s_{0}}. The same holds for $\sum_{0<\,\abs{m}\le M}\int_{\Gamma_{m}}\e^{f(z)}g(z)\dif z$, since summing all these contributions enlarges the bound only by a factor 
$M=\exp(O(\log_{3}x))$.

%%%%%%%%%%%%%%%%%%%%%%%%%%%%%%%%%%%%%%%%%%%%%%%%%%%%%%%%%%%%%%%%%%%%%%%%%%%%%
% THE REMAINDER
%%%%%%%%%%%%%%%%%%%%%%%%%%%%%%%%%%%%%%%%%%%%%%%%%%%%%%%%%%%%%%%%%%%%%%%%%%%%%
\section{The remainder in the contour integral}
\label{sec: The remainder}
Let us recall that the main goal is to estimate the Perron integral
\[
	\frac{1}{2\pi\I}\int\zeta_{C,K}(s)\frac{x_{K}^{s}}{s}\dif s = \frac{1}{2\pi\I}\int\e^{f(s)}g(s)\dif s,
\]
where the integral is along some suitable contour connecting the points $\kappa\pm\I T$ for some $\kappa>1$, $T>0$, which will be specified later. We refer again to the definitions of $f$ and $g$: \eqref{eq: def f} and \eqref{eq: def g}. In the previous section, we have used the fact that $\zeta_{C,K}$ is very large near the saddle point $s_{0}$ to show that the integral along a small contour $\Gamma_{0}$ passing through $s_{0}$ is also very large. This should be considered the ``main term'' in our estimate for the Perron integral. The zeta function is also large around the other saddle points $s_{m}$, $m\neq0$, but since these are slightly to the left of $s_{0}$, $x^{s}$ is smaller there. This turned out to be enough to show that the integrals along similar contours $\Gamma_{m}$ through $s_{m}$, $m\neq0$ combined are of lower order than the main term.
	
In this section, we estimate ``the remainder'', which consists of three parts. First we have to connect the steepest paths $\Gamma_{m}$ to each other. This forms one contour near the saddle points, which we have to connect to the ``standard'' Perron contour $[\kappa-\I T, \kappa+\I T]$. Finally, we also have to estimate the remainder in the effective Perron formula \eqref{eq: Perron inversion}.

%%%%%%%%%%%%%%%%%%%%%%%%%%%%%%%%%%%%%%%%%%%%%%%%%%%%%%%%%%%%%%%%%%%%%%%%%%%%%
\subsection{Connecting the steepest paths}
Let $\Upsilon_{m}$ be the line segment connecting $\sigma_{m-1}^{+}+\I t_{m-1}^{+}$ to $\sigma_{m}^{-}+\I t_{m}^{-}$ if $m>0$, and connecting $\sigma_{m}^{+}+\I t_{m}^{+}$ to $\sigma_{m+1}^{-}+\I t_{m+1}^{-}$ if $m<0$. By previous calculations (\eqref{eq: a_{theta}} and \eqref{eq: approx sigma_{m}^{pm}}), we know that the real part on these lines is bounded by $\sigma_{0} - \frac{\log(\pi/2)}{2\log B}$\textcolor{blue}{,} say. Furthermore, $\Re \int_{s}^{\infty}\eta(z)\dif z$ is significantly smaller on these lines than at the saddle points. Indeed, using \eqref{eq: estimate int eta} and the fact that 
\[
	\Re \frac{B^{1+\I\tau-s}}{1+\I\tau-s} = \frac{B^{1-\sigma}}{(1-\sigma)^{2}+(t-\tau)^{2}}\biggl(\cos\bigl((t-\tau)\log B\bigr)(1-\sigma) + (t-\tau)\sin\bigl((t-\tau)\log B\bigr)\biggr),
\]
we have 
\begin{align}
	\Re\int_{s}^{\infty}\eta(z)\dif z	&= \Re \frac{B^{1+\I\tau-s} - 2B^{(1+\I\tau-s)/2}}{4(1+\I\tau-s)\log B} + O\biggl(\frac{B^{1-\sigma}}{(\log_{2}B)^{2}}\biggr) \nonumber \\
							&\le \frac{(t-\tau)B^{1-\sigma}}{4(1-\sigma)^{2}\log B} + O\biggl(\frac{B^{1-\sigma}}{(\log_{2}B)^{2}}\biggr) \nonumber \\
							&\ll \frac{\log x}{(\log B)(\log_{2}B)^{1/4}} \label{eq: bound on Upsilon},
\end{align}
for $s\in\Upsilon_{m}$. In the first inequality we used that $\cos\bigl((t-\tau)\log B\bigr) \le 0$, and for the second estimate we used \eqref{eq: approx log x} and that $\sigma-\sigma_{0}\ll 1/\log B$ (which follows from \eqref{eq: approx sigma_{m}^{pm}} and \eqref{eq: approx sigma}), together with $(t-\tau)/(1-\sigma) \ll (\log_{2}B)^{-1/4}$. Using Lemma \ref{lem: bound g} to bound $g$, we see that 
\[
	\sum_{0 < \, \abs{m} \leq M} \int_{\Upsilon_{m}  }\e^{f(s)}g(s)\dif s \ll \frac{x}{\tau}\exp\biggl(-(1-\sigma_{0})\log x -\frac{\log(\pi/2)}{2}\frac{\log x}{\log B} + O\biggl(\frac{\log x}{(\log B)(\log_{2}B)^{1/4}}\biggr)\biggr), 
\]
which is negligible with respect to the contribution from $s_{0}$, in view of \eqref{eq: contribution s_{0}} and \eqref{eq: estimate int eta at s_{0}}.

%%%%%%%%%%%%%%%%%%%%%%%%%%%%%%%%%%%%%%%%%%%%%%%%%%%%%%%%%%%%%%%%%%%%%%%%%%%%%
\subsection{Returning to the line $[\kappa-\I T, \kappa+\I T]$}
We will now connect the contour near the saddle points to the line $[\kappa-\I T, \kappa+\I T]$. First we need another lemma to bound $g$.
\begin{lemma}
\label{lem: bound g (2)}
Suppose $s=\sigma+\I t$ satisfies 
\[
	\sigma \ge 1 - O\left(\frac{\log_{2}B_{K}}{\log B_{K}}\right), \quad t\ge0. 
\]
Then,
\[
	\sum_{k=0}^{K-1}\int_{s}^{s+1}\bigl(\eta_{k}(z) + \tilde{\eta}_{k}(z) + \xi_{k}(z)\bigr)\dif z + \int_{s}^{s+1}\bigl(\tilde{\eta}_{K}(z)+\xi_{K}(z)\bigr)\dif z - \int_{s+1}^{\infty}\eta_{K}(z)\dif z \ll 1.
\]
\end{lemma}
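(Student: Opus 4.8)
The plan is to first observe that, by the integrated formula for $\log\zeta_{C,K}$ and the definition \eqref{eq: def g} of $g$, the left-hand side is exactly $\log\bigl((s-1)g(s)\bigr)$; it therefore suffices to bound in modulus each of the three groups of terms by an absolute constant. Two manifestations of the rapid growth from Property \ref{Property (a)} (with $F$, $G$ chosen accordingly) will be used repeatedly: it forces $\log B_{K-1}=o(\log B_{K}/\log_{2}B_{K})$, so that $x^{\pm(1-\sigma)}=1+o(1)$ uniformly for $1\le x\le B_{K-1}$ — and in particular $|B_{k}^{1-z}|,|A_{k}^{1-z}|,|C_{k}^{1-z}|=O(1)$ on the horizontal segment $[s,s+1]$ whenever $k<K$ — and it makes $(\tau_{k})_{k}$ grow fast enough that the tails $\sum_{k<K}\tau_{k}^{-1}$ and $\sum_{k<K}\tau_{k}^{-2}$ are $O(1)$. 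From the hypothesis on $\sigma$ one also has $B_{K}^{1-\sigma}=(\log B_{K})^{O(1)}$.

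First I would handle $\sum_{k=0}^{K-1}\int_{s}^{s+1}\eta_{k}(z)\dif z$. Since the segment $[s,s+1]$ is horizontal, $|1+\I\tau_{k}-z|\ge|\tau_{k}-t|$ throughout it, and because $\tau_{k+1}/\tau_{k}\to\infty$ there is at most one index $k_{0}\le K-1$ with $|\tau_{k_{0}}-t|<1$. For every other $k\le K-1$ the numerator of $\eta_{k}$ is $O(1)$ on the segment, so $\bigl|\int_{s}^{s+1}\eta_{k}(z)\dif z\bigr|\ll1/|\tau_{k}-t|$, and summing these over such $k$ gives $O(1)$ thanks to the rapid growth of $(\tau_{k})_{k}$ (the sum is dominated by the one or two nearest terms plus the convergent tail $\sum_{k}\tau_{k}^{-1}$). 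For the exceptional index $k_{0}$, if it occurs, the bound $1/|\tau_{k_{0}}-t|$ is useless, and this is where the real work lies: there I would return to the integral representation \eqref{eq: formula int eta} (applied with $K$ replaced by $k_{0}$), which gives
\[
	\int_{s}^{s+1}\eta_{k_{0}}(z)\dif z=\frac14\int_{A_{k_{0}}}^{B_{k_{0}}}x^{-s+\I\tau_{k_{0}}}\bigl(1-x^{-1}\bigr)\frac{\dif x}{\log x},
\]
so that, using $A_{k_{0}}=\sqrt{B_{k_{0}}}$ and $x^{-\sigma}\ll x^{-1}$ on $[A_{k_{0}},B_{k_{0}}]$, $\bigl|\int_{s}^{s+1}\eta_{k_{0}}(z)\dif z\bigr|\ll\int_{A_{k_{0}}}^{B_{k_{0}}}\dif x/(x\log x)=\log2$.

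Next I would dispatch the remaining terms, all of which turn out to be smaller. For $\tilde{\eta}_{k}$, $0\le k\le K$: since $t\ge0$ we have $|1-\I\tau_{k}-z|\ge\tau_{k}+t\ge\tau_{k}$, hence $\bigl|\int_{s}^{s+1}\tilde{\eta}_{k}(z)\dif z\bigr|\ll B_{k}^{1-\sigma}/\tau_{k}$, which is $\ll\tau_{k}^{-1}$ for $k<K$ and $\ll(\log B_{K})^{O(1)}/\tau_{K}=o(1)$ for $k=K$, so the sum over $k<K$ is $O(1)$. For $\xi_{k}$, $0\le k\le K$: I would exploit the near-cancellation $C_{k}=B_{k}\bigl(1+O(\tau_{k}^{-2})\bigr)$ from \eqref{eq: approx C} and \eqref{eq: def tau}; writing $\xi_{k}(z)=\tfrac12\int_{\log C_{k}}^{\log B_{k}}\e^{(1-z)u}\dif u$ yields $|\xi_{k}(z)|\ll\log(C_{k}/B_{k})\max\bigl(1,C_{k}^{1-\sigma}\bigr)\ll\tau_{k}^{-2}$ for $k<K$ and $\ll\tau_{K}^{-2}(\log B_{K})^{O(1)}=o(1)$ for $k=K$, so again the sum over $k<K$ is $O(1)$. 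Finally, the tail $\int_{s+1}^{\infty}\eta_{K}(z)\dif z$ is handled with \eqref{eq: formula int eta} once more: $\bigl|\int_{s+1}^{\infty}\eta_{K}(z)\dif z\bigr|\le\tfrac14\int_{A_{K}}^{B_{K}}x^{-\sigma-1}\dif x/\log x\ll(\log B_{K})^{O(1)}/(A_{K}\log A_{K})=o(1)$.

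Adding the $O(1)$ contributions from these three groups proves the lemma. The step I expect to be the main obstacle is the treatment of the exceptional index $k_{0}$ in the first group: when $t$ happens to lie within distance $1$ of some $\tau_{k_{0}}$ with $k_{0}<K$, the denominator of $\eta_{k_{0}}$ is small along the whole segment $[s,s+1]$, the pointwise bound breaks down, and one must instead pass to the integral (Mellin) representation of $\int_{s}^{s+1}\eta_{k_{0}}$ and extract the logarithmic saving $\int_{A_{k_{0}}}^{B_{k_{0}}}\dif x/(x\log x)=\log2$ afforded by the relation $A_{k_{0}}=\sqrt{B_{k_{0}}}$. Everything else is a modest strengthening of the estimates already carried out in the proof of Lemma \ref{lem: bound g}.
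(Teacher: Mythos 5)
Your proof is correct and follows essentially the same approach as the paper's: you bound each group of terms by an absolute constant using the rapid growth from Property~\ref{Property (a)}, and you handle the problematic index where $t$ is close to $\tau_{k}$ by passing to the Mellin representation \eqref{eq: formula int eta} and exploiting $A_{k}=\sqrt{B_{k}}$ to extract the $\log 2$. The only cosmetic difference is that the paper's dichotomy for $\eta_{k}$ is $\tau_{k}/2<t<2\tau_{k}$ versus its complement (with an integration by parts in the latter case), whereas you use $\abs{\tau_{k}-t}<1$ versus $\ge 1$; both variants work.
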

\begin{proof}
The sum of the integrals $\int_{s+1}^{\infty}$ is trivially bounded. Recall that 
\[
	\int_{s}^{\infty}\eta_{k}(z)\dif z = \frac{1}{4}\int_{s}^{\infty}\frac{B_{k}^{1-z}-B_{k}^{(1-z)/2}}{1+\I\tau_{k}-z}\dif z = \frac{1}{4}\int_{1/2}^{1}\frac{B_{k}^{(1+\I\tau_{k}-s)u}}{u}\dif u.
\]
Let $k<K$.

\noindent\textbf{Case 1: $t\le\tau_{k}/2$ or $t\ge2\tau_{k}$.} Then the above integral is bounded by
\[
	\frac{B_{k}^{1-\sigma}}{\tau_{k}\log B_{k}} \le \frac{1}{\tau_{k}}\exp\biggl\{O\biggl(\frac{\log_{2}B_{K}}{\log B_{K}}\log B_{k}\biggr)\biggr\} \ll \frac{1}{\tau_{k}},
\]
where the fast growth of $(B_{k})_{k}$ was used (property \ref{Property (a)}).

\noindent\textbf{Case 2: $\tau_{k}/2 < t < 2\tau_{k}$.} Then we use the second integral representation for $\int^{\infty}_{s} \eta_{k}(z) \dif z$ and get the bound $B_{k}^{1-\sigma} \ll 1$. This case occurs at most once.

Since $\sum_{k}(1/\tau_{k})$ converges, this deals with the terms involving $\eta_{k}$; bounding the terms with $\tilde{\eta}_{k}$, $k<K$ is completely analogous, except that in this case we can always use the bound from \textbf{Case 1} since $\abs{1-\I\tau_{k}-s} \gg \tau_{k}$ (since $t\ge0$). Also 
\[
	\int_{s}^{\infty}\tilde{\eta}_{K}(z)\dif z \ll \frac{1}{\tau_{K}}\exp(O(\log_{2}B_{K})) = \exp\bigl(O(\log_{2}B_{K}) - c(\log B_{K})^{\alpha}\bigr) \ll 1.
\]

Finally for $k\le K$, 
\begin{align*}
	\int_{s}^{\infty}\xi_{k}(z)\dif z	&= -\frac{1}{2}\int_{1}^{\log C_{k}/\log B_{k}}\frac{B_{k}^{(1-s)u}}{u}\dif u \ll \biggl(\frac{\log C_{k}}{\log B_{k}}-1\biggr)C_{k}^{1-\sigma} \\
							&\ll \exp\biggl\{-2c(\log B_{k})^{\alpha} + O\biggl(\frac{\log_{2}B_{K}}{\log B_{K}}\log C_{k}\biggr)\biggr\} \ll \exp\bigl(-c(\log B_{k})^{\alpha}\bigr) = \frac{1}{\tau_{k}}, 
\end{align*}
where we used \eqref{eq: approx C}.
\end{proof}

Recall that we have set $M = \lfloor(\log_{2}B)^{3/4}\rfloor$. Set $T_{1}^{\pm} = t_{\pm M}^{\pm}$. We now connect the point $\sigma_{-M}^{-}+ \I T_{1}^{-}$ to some point on the real axis\footnote{The ``complete'' contour will consist of the contour described in this section in the upper half plane, together with its reflection across the real axis in the lower half plane. As mentioned before, it suffices to only consider the part in the upper half plane, since $\zeta_{C,K}(\overline{s})=\overline{\zeta_{C,K}(s)}$.}, and $\sigma_{M}^{+}+\I T_{1}^{+}$ to the point $\kappa+\I T$ by a number of line segments ($\kappa$ and $T$ will be specified later). In what follows,
 we will use expressions in the style ``The segment $\Delta$ contributes $\ll F$, which is negligible'', by which we mean that $\int_{\Delta}\e^{f(s)}g(s)\dif s \ll F$ and that $F$ is of lower order than the contribution of $s_{0}$ \eqref{eq: contribution s_{0}}. We will also apply Lemma \ref{lem: bound g (2)} repeatedly, without referring to it each time.

\mbox{}

First we connect $\sigma_{M}^{+}+\I T_{1}^{+}$ to $\sigma_{0}+\I T_{1}^{+}$, and similarly $\sigma_{-M}^{-}+\I T_{1}^{-}$ to $\sigma_{0}+\I T_{1}^{-}$. By \eqref{eq: bound on Upsilon}, this contributes 
\[
	\ll \frac{x^{\sigma_{0}}}{\tau}\exp\biggl(O\biggl(\frac{\log x}{(\log B)(\log_{2}B)^{1/4}}\biggr)\biggr),
\]
which is negligible. Next, set $T_{2}^{\pm} = \tau \pm \exp\bigl((\log B)^{\alpha/2}\bigr)$, $\Delta_{1}^{+}=[\sigma_{0}+\I T_{1}^{+}, \sigma_{0}+\I T_{2}^{+}]$, $\Delta_{1}^{-}=[\sigma_{0}+\I T_{2}^{-}, \sigma_{0}+ \I T_{1}^{-}]$. We require a better bound for $\int_{s}^{\infty}\eta(z)\dif z$ on these lines. Integrating by parts, one sees that 
\[
	\int_{s}^{\infty}\eta(z)\dif z = \frac{1}{4}\int_{s}^{\infty}\frac{B^{1-z}-B^{(1-z)/2}}{1+\I\tau-z}\dif z = \frac{B^{1-s} - 2B^{(1-s)/2}}{4(1+\I\tau-s)(\log B)} + 
			O\biggl(\frac{(\log B)^{\alpha}}{(\log_{2}B)^{2}}\biggr),
\]
if $\Re s=\sigma_{0}$. If $\abs{t-\tau_{K}} \ge (\log_{2}B)^{3/4}/(2\log B)$ say, then for some $r>0$,
\[
	\frac{1}{\abs{1+\I\tau-s}} \le \frac{1}{1-\sigma_{0}}\biggl(1-r\biggl(\frac{t-\tau}{1-\sigma_{0}}\biggr)^{2}\biggr) \le \frac{1}{1-\sigma_{0}}\biggl(1-\frac{r/4}{(\log_{2}B)^{1/2}}\biggr).
\]
Hence,
\[
	\Re \int_{s}^{\infty}\eta(z)\dif z \le \frac{\log x}{\log B}\biggl(1-\frac{r/4}{(\log_{2}B)^{1/2}}\biggr) + O\biggl(\frac{\log x}{(\log B)(\log_{2}B)}\biggr).
\]
If furthermore $\abs{t-\tau}\ge1$, then 
\[
	\Re \int_{s}^{\infty}\eta(z)\dif z \ll \frac{B^{1-\sigma_{0}}}{\log B} \asymp (\log B)^{\alpha-1} \ll 1.
\]
These bounds imply that the contribution from $\Delta_{1}^{\pm}$ is 
\[
	\ll \frac{x^{\sigma_{0}}}{\tau}\biggl\{\exp\biggl(\frac{\log x}{\log B}\biggl(1-\frac{r/4}{(\log_{2}B)^{1/2}}\biggr) + O\biggl(\frac{\log x}{(\log B)(\log_{2}B)}\biggr)\biggr)
									+ \exp\bigl((\log B)^{\alpha/2}\bigr) \biggr\},
\]
which is admissible.
Next, we set 
\[
	\sigma' = \sigma_{0} -2\frac{c(\log B)^{\alpha}}{\log x} = \sigma_{0} - O\biggl(\frac{\log_{2}B}{\log B}\biggr),
\]
so that $x^{\sigma'} = x^{\sigma_{0}}/\tau^{2}$. Set $\Delta_{2}^{\pm} = [\sigma'+\I T_{2}^{\pm}, \sigma_{0}+\I T_{2}^{\pm}]$. For $\sigma \ge 1 - O\bigl(\log_{2}B/\log B\bigr)$ and $\abs{t-\tau} \ge \exp\bigl((\log B)^{\alpha/2}\bigr)$, 
\[
	\Re \int_{s}^{\infty}\eta(z)\dif z \ll \exp\bigl(-(\log B)^{\alpha/2} + O(\log_{2}B)\bigr) \ll 1, 
\]
so the contribution from $\Delta_{2}^{\pm}$ is $\ll x^{\sigma_{0}}/\tau$, which is negligible. Let now $T_{3}^{+} = x^{2}$, $\Delta_{3}^{+} = [\sigma' + \I T_{2}^{+}, \sigma'+\I T_{3}^{+}]$, and $\Delta_{3}^{-} = [\sigma', \sigma'+ \I T_{2}^{-}]$. We have that 
\begin{align*}
	\int_{\Delta_{3}^{+}} 	&\ll x^{\sigma'}\int_{T_{2}^{+}}^{T_{3}^{+}}\frac{\dif t}{t} \ll \frac{x^{\sigma_{0}}}{\tau^{2}}\log x, \\
	\int_{\Delta_{3}^{-}}	&\ll x^{\sigma'}\biggl(\int_{1}^{T_{2}^{-}}\frac{\dif t}{t} + \frac{1}{\abs{\sigma'-1}}\biggr) \ll \frac{x^{\sigma_{0}}}{\tau^{2}}\biggl( (\log B)^{\alpha} + \frac{\log B}{\log_{2}B}\biggr).
\end{align*}
Both of these are admissible.
Finally we set $\Delta_{4}^{+} = [\sigma'+\I T_{3}^{+}, 3/2 + \I T_{3}^{+}]$. This segment only contributes $\ll x^{3/2}/T_{3}^{+} = 1/\sqrt{x}$.

We have now connected our contour to the line $[\kappa-\I T, \kappa+\I T]$, with $\kappa=3/2$ and $T=T_{3}^{+}=x^{2}$.

%%%%%%%%%%%%%%%%%%%%%%%%%%%%%%%%%%%%%%%%%%%%%%%%%%%%%%%%%%%%%%%%%%%%%%%%%%%%%
\section{Conclusion of the analysis of the continuous example} 
\label{Section conclusion continuous example}
By an effective Perron formula, e.g.\ \cite[Theorem II.2.3]{Tenenbaumbook}, we have that\footnote{The theorem in \cite{Tenenbaumbook} is only formulated in terms of discrete measures $\dif A = \sum_{n}a_{n}\delta_{n}$. One can easily verify that the result holds for general measures of locally bounded variation $\dif A$, upon replacing $\sum_{n}\dotso\abs{a_{n}}$ by $\int_{1^{-}}^{\infty} \dotso \abs{\dif A}$.} 
\begin{align*}
	N_{C,K}(x)		&= \frac{1}{2}\bigl(N_{C,K}(x^{+})+N_{C,K}(x^{-})\bigr) \\
				&= \frac{1}{2\pi\I}\int_{\kappa-\I T}^{\kappa+\I T}\zeta_{C,K}(s)\frac{x^{s}}{s}\dif s + 
					O\biggl(x^{\kappa}\int_{1^{-}}^{\infty}\frac{1}{u^{\kappa}\bigl(1+T\abs{\log(x/u)}\bigr)}\dif N_{C,K}(u)\biggr).
\end{align*}
We apply it with $x=x_{K}$, $\kappa=3/2$, and $T=(x_{K})^{2}$. 
Let us first deal with the error term in the effective Perron formula. We have for every $K$:
\[
	\dif N_{C,K}(u) = \expm(\dif\Pi_{C,K}(u)) \le \expm(2\dif\Li(u)) = (\delta_{1}(u)+\dif u)\ast(\delta_{1}(u)+\dif u) = \delta_{1}(u) + 2\dif u + \log u\dif u.
\]
Hence this error term is bounded by 
\begin{align*}
	&\phantom{\ll}\frac{x^{3/2}}{T\log x} + x^{3/2}\biggl(\int_{1}^{x/2} + \int_{x/2}^{x-1} + \int_{x-1}^{x+1} + \int_{x+1}^{\infty}\biggr)\frac{2 + \log u}{u^{3/2}\bigl(1+T\abs{\log(x/u)}\bigr)}\dif u \\
	&\ll \frac{1}{\sqrt{x}\log x} + x^{3/2}\biggl(\frac{1}{x^{2}} + \frac{\log x}{x^{3/2}}\biggr) \ll \log x.
\end{align*}

We shift the contour in the integral to the contour described in the previous (sub)sections. We showed that the integral along the shifted contour has sign $(-1)^{K}$, and has absolute value bounded from below by 
\[
	x_{K}\exp\biggl\{-(c(\alpha+1))^{\frac{1}{\alpha+1}}(\log x_{K}\log_{2}x_{K})^{\frac{\alpha}{\alpha+1}}\biggl(1+\frac{\alpha}{\alpha+1}\frac{\log_{3}x_{K}}{\log_{2}x_{K}}+ O\biggl(\frac{1}{\log_{2}x_{K}}\biggr)\biggr)\biggr\},
\]
see \eqref{eq: contribution s_{0} explicit}. Shifting the contour also gives a contribution from the pole at $s=1$, which is $\rho_{C,K}x_{K}$, where 
\[
	\rho_{C,K} = \res_{s=1}\zeta_{C,K}(s) = \exp\biggl(\sum_{k=0}^{K}\int_{1}^{2}\bigl(\eta_{k}(z)+\tilde{\eta}_{k}(z)+\xi_{k}(z)\bigr)\dif z\biggr).
\]
To conclude the analysis of the continuous example $(\Pi_{C}, N_{C})$, we need to show that the oscillation result holds for $N_{C}$, i.e.\ that $N_{C}(x) - \rho_{C}x$ displays the desired oscillation. The density $\rho_{C}$ of $N_{C}$ equals the right hand residue of $\zeta_{C}$ at $s=1$, that is $\lim_{s\to 1^{+}}(s-1)\zeta_{C}(s)$ (see e.g. \cite[Theorem 7.3]{DiamondZhangbook}):
\[
	\rho_{C} =  \exp\biggl(\sum_{k=0}^{\infty}\int_{1}^{2}\bigl(\eta_{k}(z)+\tilde{\eta}_{k}(z)+\xi_{k}(z)\bigr)\dif z\biggr).
\]
Now 
\begin{gather*}
	\int_{1}^{2}\bigl(\eta_{k}(s) + \tilde{\eta}_{k}(z)\bigr)\dif z \ll \int_{1}^{2}\frac{B_{k}^{1-z}-B_{k}^{(1-z)/2}}{1 \pm\I\tau_{k}-z}\dif z \ll \frac{1}{\tau_{k}\log B_{k}}, \\
	 \int_{1}^{2}\xi_{k}(z)\dif z = \frac{1}{2}\int_{\log B_{k}}^{\log C_{k}}\frac{\e^{-u}-1}{u}\dif u \ll \frac{\log C_{k}-\log B_{k}}{\log B_{k}} \ll \frac{1}{\tau_{k}^{2}},
\end{gather*}
where we used \eqref{eq: approx C} in the last step. By property \ref{Property (a)}, we may assume that 
\[
	\sum_{k=K+1}^{\infty}\frac{1}{\tau_{k}\log B_{k}} \le \frac{2}{\tau_{K+1}\log B_{K+1}} \le \frac{1}{x_{K}}.
\]
Hence we have
\[
	\rho_{C,K} - \rho_{C} = \rho_{C}\biggl\{\exp\biggl(-\sum_{k=K+1}^{\infty}\int_{1}^{2}\bigl(\eta_{k}(z)+\tilde{\eta}_{k}(z)+\xi_{k}(z)\bigr) \dif z\biggr)-1\biggr\} \ll \frac{1}{x_{K}},
\]
so that
\begin{align*}
	N_{C}(x_{K}) -\rho_{C}x_{K} 	&= N_{C,K}(x_{K}) -\rho_{C,K}x_{K} + (\rho_{C,K}-\rho_{C})x_{K} \\
							&= \Omega_{\pm}\Bigl(x_{K}\exp\bigl(-(c(\alpha+1))^{\frac{1}{\alpha+1}}(\log x_{K}\log_{2}x_{K})^{\frac{\alpha}{\alpha+1}}(1+\dotso)\bigr)\Bigr) +O(1).
\end{align*}
This concludes the proof of the existence of a continuous Beurling prime system satisfying \eqref{eq: asymptotics Pi} and \eqref{eq: oscillation N}.

%%%%%%%%%%%%%%%%%%%%%%%%%%%%%%%%%%%%%%%%%%%%%%%%%%%%%%%%%%%%%%%%%%%%%%%%%%%%%
% DISCRETIZATION
%%%%%%%%%%%%%%%%%%%%%%%%%%%%%%%%%%%%%%%%%%%%%%%%%%%%%%%%%%%%%%%%%%%%%%%%%%%%%
\section{The discrete example}
\label{sec: Discretization}
We will now show the existence of a \emph{discrete} Beurling prime system $(\Pi,N)$
arising from a sequence of Beurling primes $1<p_{1}\le p_{2}\le \dotso$ and satisfying \eqref{eq: asymptotics Pi} and \eqref{eq: oscillation N}. This will be done by approximating the continuous system $(\Pi_{C}, N_{C})$ with a discrete one via a probabilistic procedure devised by
 the first and third named authors in \cite{B-V2021}. This random approximation method is an improvement of that
  of Diamond, Montgomery, and Vorhauer \cite[Section 7]{DiamondMontgomeryVorhauer} (see also Zhang \cite[Section 2]{Zhang2007}). 
We also use a trick introduced by the authors in \cite[Section 6]{B-D-V2020} in order to control the argument of the zeta function at some specific points; this is done by adding a well-chosen prime finitely many times to the system.

Given a non-decreasing right-continuous function $F$, which tends to $\infty$ and satisfies $F(1)=0$ and $F(x) \ll x/\log x$, the approximation procedure from \cite{B-V2021} guarantees the existence of a sequence of Beurling primes $\MP_{D} = (p_{j})_{j}$ with counting function $\pi_{D}$ satisfying
\begin{gather}
	\abs{\pi_{D}(x)-F(x)} \ll 1, \label{eq: bound pi_{D}}\\
	\forall y\ge1, \forall t\ge0: \abs[4]{\sum_{p_{j}\le y}p_{j}^{-\I t} - \int_{1}^{y}u^{-\I t}\dif F(u)} \ll \sqrt{y} + \sqrt{\frac{y\log(\,\abs{t}+1)}{\log(y+1)}}. \label{eq: bound exp sum}	
\end{gather}

We will apply this with\footnote{If $\alpha<1$ or $\alpha=1$ and $c \leq 1/2$, we can apply the method with $F=\Pi_{C}$, since $\Pi_{D}(x) - \pi_{D}(x) \ll \sqrt{x}\ll x\exp\bigl(-c(\log x)^{\alpha}\bigr)$, so that Lemma \ref{lem: pi_{C}} is not needed. In this case, the method of Diamond, Montgomery, and Vorhauer, which yields \eqref{eq: bound exp sum} and \eqref{eq: bound pi_{D}} with the bound $1$ replaced by $\sqrt{x}$, also suffices.} $F=\pi_{C}$, where $\pi_{C}$ is defined as
\[
	\pi_{C}(x) = \sum_{\nu=1}^{\infty}\frac{\mu(\nu)}{\nu} \Pi_{C}(x^{1/\nu}), \quad \mbox{ so that } \quad \Pi_{C}(x) = \sum_{\nu=1}^{\infty}\frac{\pi_{C}(x^{1/\nu})}{\nu}.
\]
Here, $\mu$ stands for
the classical M\"obius function.

\begin{lemma}
\label{lem: pi_{C}}
The function $\pi_{C}$ is non-decreasing, right-continuous, tends to $\infty$\textcolor{blue}{,} and satisfies $\pi_{C}(1)=0$ and $\pi_{C}(x) \ll x/\log x$.
\end{lemma}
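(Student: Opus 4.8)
The plan is to work from the Möbius relation defining $\pi_{C}$ together with the explicit form of $\psi_{C}'$, so that everything reduces to elementary estimates and one positivity statement. Since $\psi_{C}$ is absolutely continuous, for a.e.\ $u\ge1$ we have $\psi_{C}'(u)=(1-u^{-1})h(u)$ with
\[
  h(u)=1+\sum_{k\ge0}\Bigl(\tfrac12\cos(\tau_{k}\log u)\,\mathbf{1}_{[A_{k},B_{k}]}(u)-\tfrac12\,\mathbf{1}_{(B_{k},C_{k})}(u)\Bigr),
\]
and because the intervals $[A_{k},B_{k}]$ and $(B_{k},C_{k})$ are pairwise disjoint, at most one summand is nonzero at any point, so $\tfrac12\le h(u)\le\tfrac32$ for a.e.\ $u$, with $h(u)=1$ for $u<A_{0}$. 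I would first record the consequence $\Pi_{C}'(u)=\psi_{C}'(u)/\log u\le\tfrac32(1-u^{-1})/\log u$, whence, using $(1-u^{-1})/\log u\le1$ on $[1,\infty)$, the crude bound $0\le\Pi_{C}(y)\le\tfrac32(y-1)$; combined with the already established $\Pi_{C}(y)=\Li(y)+O(y\exp(-c(\log y)^{\alpha}))$ (resp.\ $+O(\log_{2}y)$ when $\alpha=c=1$) this also gives $\Pi_{C}(y)\ll y/\log y$.

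The elementary assertions should then follow quickly. From $\Pi_{C}(y)\le\tfrac32(y-1)$ and the inequality $x^{1/\nu}-1\le(x-1)/\nu$, the series $\sum_{\nu}\tfrac{\mu(\nu)}{\nu}\Pi_{C}(x^{1/\nu})$ converges absolutely and uniformly on compact $x$-intervals, so $\pi_{C}$ is continuous, hence right-continuous, and $\pi_{C}(1)=0$ because $\Pi_{C}(1)=0$. Isolating the term $\nu=1$ and bounding the rest by $\tfrac32\sum_{\nu\ge2}\tfrac1\nu(x^{1/\nu}-1)=\tfrac34\sqrt{x}+O(x^{1/3}\log_{2}x)$ gives $\pi_{C}(x)=\Pi_{C}(x)+O(\sqrt{x})$, which yields both $\pi_{C}(x)\to\infty$ (as $\Pi_{C}(x)\sim\Li(x)$) and $\pi_{C}(x)\ll x/\log x$ for large $x$; near $x=1$ one has instead $\pi_{C}(x)\ll x-1$, which is harmless since $x/\log x$ is bounded below on $(1,\infty)$.

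The heart of the lemma is monotonicity, and this is where I would concentrate the work. Differentiating the series termwise — legitimate because the differentiated series converges uniformly on compacts, again via $\psi_{C}'(u)\le\tfrac32(1-u^{-1})$ — and using $\log(x^{1/\nu})=\tfrac1\nu\log x$, everything collapses to
\[
  x(\log x)\,\pi_{C}'(x)=\sum_{\nu\ge1}\frac{\mu(\nu)}{\nu}\,g(x^{1/\nu}),\qquad g(u):=u\psi_{C}'(u)=(u-1)h(u),
\]
valid a.e.; since $\pi_{C}$ is absolutely continuous it suffices to show this right-hand side is $\ge0$ for every $x\ge1$. I would take $B_{0}$ — a free parameter of the construction — large enough that $A_{0}=\sqrt{B_{0}}$ exceeds an absolute constant $x_{0}$ specified below, and split into two ranges. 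For $1\le x<A_{0}$ every $x^{1/\nu}$ lies below $A_{0}$, so $h\equiv1$ there; writing $x=\e^{t}$ and interchanging the (absolutely convergent) double sum,
\[
  \sum_{\nu\ge1}\frac{\mu(\nu)}{\nu}(x^{1/\nu}-1)=\sum_{m\ge1}\frac{t^{m}}{m!}\sum_{\nu\ge1}\frac{\mu(\nu)}{\nu^{m+1}}=\sum_{m\ge1}\frac{t^{m}}{m!\,\zeta(m+1)}\ge0.
\]
For $x\ge A_{0}$ I would instead use the triangle inequality with $g(x)\ge\tfrac12(x-1)$ and $0\le g(x^{1/\nu})\le\tfrac32(x^{1/\nu}-1)$:
\[
  \sum_{\nu\ge1}\frac{\mu(\nu)}{\nu}g(x^{1/\nu})\ge\tfrac12(x-1)-\tfrac32\sum_{\nu\ge2}\tfrac1\nu(x^{1/\nu}-1)=\tfrac12(x-1)-\tfrac34\sqrt{x}+O(x^{1/3}\log_{2}x),
\]
which is positive once $x\ge x_{0}$. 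Hence $\pi_{C}'\ge0$ a.e.\ and $\pi_{C}$ is non-decreasing.

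I expect the monotonicity near $x=1$ to be the main obstacle: there the triangle-inequality bound is useless because $\Pi_{C}(\sqrt{x})$ is comparable to $\Pi_{C}(x)$, so one cannot dominate the tail by the main term. The way around it is to observe that on the initial segment $[1,A_{0})$ the continuous system is literally $\dif\Li$, for which the derivative of the Möbius-inverted counting function has the manifestly nonnegative expansion $\sum_{m}t^{m}/(m!\,\zeta(m+1))$; arranging that this segment overlaps the range where the crude bound works is exactly what forces $B_{0}$ to be taken large, at no cost since $B_{0}$ is ours to choose.
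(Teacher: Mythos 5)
Your proof is correct, and it rests on the same two pillars as the paper's: the manifest positivity of $\sum_{m\ge1}\frac{(\log x)^m}{m!\,\zeta(m+1)}$ (what the paper calls $\li$), and the observation that taking $A_0$ large makes crude estimates suffice once $x\ge A_0$. The organization differs, though. The paper decomposes $\pi_C=\li+\sum_{k,\nu}(r_{k,\nu}+s_{k,\nu})$ and then compares the derivative lower bound $\li'(x)\ge\frac{1}{\zeta(2)}\frac{1-x^{-1}}{\log x}$ against the perturbation derivative bound $\frac{1}{2\log x}\bigl(1+\frac{\log_2 x}{\sqrt{x}}\bigr)$, winning because $1/\zeta(2)>1/2$. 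You instead absorb the corrections into a single density $h$ with $\frac12\le h\le\frac32$, pass to $x\log x\cdot\pi_C'(x)=\sum_\nu\frac{\mu(\nu)}{\nu}g(x^{1/\nu})$ with $g(u)=(u-1)h(u)$, and split $x<A_0$ (where $g(u)=u-1$ and the power-series argument applies verbatim) from $x\ge A_0$ (where the $\nu=1$ term $\ge\frac12(x-1)$ swamps the triangle-inequality bound $\frac34\sqrt{x}+O(x^{1/3}\log_2 x)$ on the tail). Both arguments use the disjointness of the supports of the $R_k,S_k$ at the same structural point — the paper via ``at most one $k$ per $\nu$,'' you via $\frac12\le h\le\frac32$ — and both crucially exploit the factor $1/2$ built into $R_k,S_k$ to keep $\psi_C'\ge\frac12(1-u^{-1})$. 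Your version is a clean, self-contained rewrite; the paper's is marginally sharper in that its comparison is uniform in $x\ge A_0$ at the level of derivatives rather than relying on the $x$ versus $\sqrt{x}$ asymmetry. Both are valid.
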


\begin{proof}
We only need to show that $\pi_{C}$ is non-decreasing, the other assertions are obvious. Using the series expansion $\Li(x) = \sum_{n=1}^{\infty}\frac{(\log x)^{n}}{n!n}$, we have
\[
	\pi_{C}(x) = \li(x) + \sum_{k=0}^{\infty}\sum_{\nu=1}^{\infty}\bigl(r_{k,\nu}(x) + s_{k,\nu}(x)\bigr), 
\]
where
\begin{align*}
	\li(x) 			&= \sum_{\nu=1}^{\infty}\frac{\mu(\nu)}{\nu}\Li(x^{1/\nu}) = \sum_{n=1}^{\infty}\frac{(\log x)^{n}}{n!n\zeta(n+1)} \quad \mbox{($\zeta$ being the ordinary Riemann zeta function)}; \\
	r_{k,\nu}(x) 	&= \begin{dcases}
					\mathrlap{\frac{\mu(\nu)}{2\nu}\int_{A_{k}}^{x^{1/\nu}}\frac{1-u^{-1}}{\log u}\cos(\tau_{k}\log u)\dif u}
					\phantom{\frac{\mu(\nu)}{2\nu}\biggl(\int_{A_{k}}^{B_{k}}\frac{1-u^{-1}}{\log u}\cos(\tau_{k}\log u)\dif u + \bigl(\Li(B_{k})-\Li(x^{1/\nu})\bigr)\biggr)}
								&\mbox{for } A_{k}^{\nu} \le x < B_{k}^{\nu}, \\
					0			&\mbox{otherwise;}
			\end{dcases}\\ 
	s_{k, \nu}(x)	&= \begin{dcases}
					\frac{\mu(\nu)}{2\nu}\biggl(\int_{A_{k}}^{B_{k}}\frac{1-u^{-1}}{\log u}\cos(\tau_{k}\log u)\dif u + \bigl(\Li(B_{k})-\Li(x^{1/\nu})\bigr)\biggr)	&\mbox{for } B_{k}^{\nu} \le x < C_{k}^{\nu}, \\
					0																									&\mbox{otherwise.}
			\end{dcases}
\end{align*}
Note that the notation $\li(x)$ is not standard: here it does not refer to (a variant of) the logarithmic integral, but rather $\li(x)$ relates to $\Li(x)$ in the same way as $\pi(x)$ relates to $\Pi(x)$.

We have $\supp(r_{k,\nu}+s_{k,\nu}) = [A_{k}^{\nu}, C_{k}^{\nu}] \eqqcolon I_{k, \nu}$. The function $\pi_{C}$ is absolutely continuous, so it will follow that it is non-decreasing if we show that $\pi_{C}'$ is non-negative. If $x$ is contained in no $I_{k,\nu}$, then $\pi_{C}'(x) = \li'(x) >0$. Suppose now the contrary, and let $m$ be the largest integer 
such that $x\in I_{k, m}$ for some $k\ge0$. Note that $m\le \log x/\log A_{0}$. Since for each $\nu\le m$, there is at most one value of $k$ for which $x\in I_{k,\nu}$, we have
\begin{align*}
	\abs[4]{\biggl(\sum_{k=0}^{\infty}\sum_{\nu=1}^{\infty}\bigl(r_{k,\nu}(x) + s_{k,\nu}(x)\bigr)\biggr)'} 	&\le \frac{1}{2}\sum_{\substack{k,\nu \\ x\in I_{k,\nu}}}\frac{1-x^{-1/\nu}}{\nu\log x}x^{1/\nu-1} \\
		&\le \frac{1}{2\log x}\sum_{\nu=1}^{m}\frac{x^{1/\nu-1}}{\nu} \le \frac{1}{2\log x}\biggl(1 + \frac{\log_{2}x}{\sqrt{x}}\biggr).	
\end{align*}
On the other hand, 
\[
	\li'(x) \ge \frac{1}{\zeta(2)}\frac{1-x^{-1}}{\log x} \ge 0.6\frac{1-x^{-1}}{\log x},
\]
and together with $x\ge A_{0}$, this implies that $\pi_{C}'(x) > 0$ (we may assume that $A_{0}$ is sufficiently large).
\end{proof}

Applying the discretization procedure to $F=\pi_{C}$ shows the existence of a sequence of Beurling primes $\MP_{D} = (p_{j})_{j}$ with counting function $\pi_{D}$ satisfying \eqref{eq: bound pi_{D}} and \eqref{eq: bound exp sum}. Denote the Riemann prime counting function of $\MP_{D}$ by $\Pi_{D}$, and set 
\[
	\dif\Pi_{D,K}(u) = \sum_{p_{j}^{\nu}<A_{K+1}}\frac{1}{\nu}\delta_{p_{j}^{\nu}}(u) + \chi_{[A_{K+1},\infty)}(u)\dif\Li(u),
\]
where $\chi_{E}$ denotes the characteristic function of the set $E$. Let $\log\zeta_{D,K}(s)$ be the Mellin-Stieltjes transform of $\dif\Pi_{D,K}$. 
Set
\[
	S_{l} = \biggl[l\frac{\pi}{80}-\frac{\pi}{160}, l\frac{\pi}{80}+\frac{\pi}{160}\biggr) + 2\pi\Z \quad \mbox{for } l=0,1,\dotso,159.
\]
Then for some $l$ (resp.\ $r$), we have that for infinitely many even (resp.\ odd) values of $K$
\[
	\Im\bigl(\log\zeta_{D,K}(1+\I\tau_{K}) - \log\zeta_{C,K}(1+\I\tau_{K})\bigr) \in S_{l} \quad (\mbox{resp. } S_{r}).
\]
Assume without loss of generality that $l\ge r$. Then there exists a number $q$, close to $80/\pi$, such that
\begin{align}
	\abs{\Im\bigl(-l\log(1-q^{-(1+\I\tau_{K})})\bigr) + \mathrlap{l}\phantom{r}\frac{\pi}{80}} 	
													&< \frac{\pi}{40} \quad \mbox{if $K$ is even}, \label{eq: even}\\
	\abs{\Im\bigl(-l\log(1-q^{-(1+\I\tau_{K})})\bigr) + r\frac{\pi}{80}} 	&< \frac{\pi}{40} \quad \mbox{if $K$ is odd}.\label{eq: odd}
\end{align}
We refer to \cite[Section 6]{B-D-V2020} for a proof of this statement. That
proof only requires some fast growth of the sequence $(\tau_{k})_{k}$, which we may assume.

We define our final prime system $\MP$ as the prime system obtained by adding the prime $q$ with multiplicity $l$ to the system $\MP_{D}$. Denote its Riemann prime counting function by $\Pi$, and its integer counting function by $N$. We have 
\[
	\Pi(x) = \Pi_{D}(x) + O(\log_{2}x) = \Pi_{C}(x) + O(\log_{2}x),	
\]	
where in the last step we used \eqref{eq: bound pi_{D}}. Since $\Pi_{C}$ satisfies \eqref{eq: asymptotics Pi}, it is clear that $\Pi$ also satisfies\footnote{Recall that in the case $\alpha=c=1$, we have altered the error term in the PNT \eqref{eq: asymptotics Pi} to
 $O(\log_{2}x)$.} \eqref{eq: asymptotics Pi}. 

Set\footnote{This is a slight abuse of notation, since the equality $\Pi_{K}(u) = \sum_{\nu}\pi_{K}(u^{1/\nu})/\nu$ only holds for $u<A_{K+1}$.} 	
\begin{align*}
	\dif\Pi_{K}(u) 	&= \dif\Pi_{D,K}(u) + l \sum_{q^{\nu}<A_{K+1}}\frac{1}{\nu}\delta_{q^{\nu}}(u); \\
	\dif \pi_{K}(u) 	&= \sum_{p_{j}<A_{K+1}}\delta_{p_{j}}(u) + l\delta_{q}(u).
\end{align*}
If $x<A_{K+1}$, $N(x)=N_{K}(x)$, and applying the effective Perron formula gives that for $\kappa>1$
and $T\ge0$
\begin{align}
	\frac{1}{2}(N(x^{+})+N(x^{-})) 	&= \frac{1}{2\pi\I}\int_{\kappa-\I T}^{\kappa+\I T}\zeta_{C,K}(s)\frac{x^{s}}{s}\exp\bigl(\log\zeta_{K}(s)-\log\zeta_{C,K}(s)\bigr)\dif s \nonumber\\
							&\quad+ O\biggl(x^{\kappa}\int_{1^{-}}^{\infty}\frac{1}{u^{\kappa}\bigl(1+T\abs{\log(x/u)}\bigr)}\dif N_{K}(u)\biggr). \label{eq: effective Perron}
\end{align}
We will shift the contour of the first integral to one which is (up to some of the line segments $\Delta_{i}^{+}$) identical to the contour considered in the analysis of the continuous example $\Pi_{C}$. One can then repeat the whole analysis in Sections \ref{sec: The contribution from the saddle points} and \ref{sec: The remainder} to estimate this integral, provided that we have a good bound on $\abs{\exp(\log\zeta_{K}(s)-\log\zeta_{C,K}(s))}$, and that $\arg\bigl(\exp(\log\zeta_{K}(s)-\log\zeta_{C,K}(s))\bigr)$ is sufficiently small for $s$ on the steepest path $\Gamma_{0}$. We now show that this is the case.

Integrating by parts and using that $\dif\Pi_{K} = \dif\Pi_{C,K}$ on $[A_{K+1},\infty)$ and $\dif\Pi_{C,K}=\dif\Pi_{C}$ on $[1,A_{K+1}]$, we see that for $\sigma > 1/2$, 
\begin{align*}
	\log\zeta_{K}(s)-\log\zeta_{C,K}(s) 	&= \int_{1}^{A_{K+1}}y^{-s}\dif\,\bigl(\Pi_{K}(y)-\Pi_{C,K}(y)\bigr) \nonumber\\
								&= O(1) + \int_{1}^{A_{K+1}}y^{-s}\dif\,\bigl(\Pi_{K}(y) - \pi_{K}(y)\bigr) - \int_{1}^{A_{K+1}}y^{-s}\dif\,\bigl(\Pi_{C}(y)-\pi_{C}(y)\bigr) \\
								&\quad + \int_{1}^{A_{K+1}}y^{-\sigma}\dif\,\biggl(\sum_{p_{j}\le y}p_{j}^{-\I t} - \int_{1}^{y}u^{-\I t}\dif\pi_{C}(u)\biggr) .
\end{align*}
The bound \eqref{eq: bound exp sum} and the fact that $\dif\,(\Pi_{K} - \pi_{K})$, $\dif\,\bigl(\Pi_{C}-\pi_{C}\bigr)$ are positive measures now imply that uniformly for $\sigma \ge 3/4$\textcolor{blue}{,} say,
\begin{equation}
\label{eq: bound zeta_{K}-zeta_{C,K}}
	\abs{\log\zeta_{K}(s) - \log\zeta_{C,K}(s)} \le D \sqrt{\log(\, \abs{t}+2)},
\end{equation}
where $D>0$ is a constant which depends on the implicit constant in \eqref{eq: bound exp sum}, but which is independent of $K$. Similarly, 
\[
	(\log\zeta_{K}(s))' - (\log\zeta_{C,K}(s))' \ll \sqrt{\log(\,\abs{t}+2)}.
\]
Also, for infinitely many even and odd $K$,
\begin{align*}
	&\Im\bigl(\log\zeta_{K}(1+\I\tau_{K}) - \log\zeta_{C,K}(1+\I\tau_{K})\bigr) \\
	&= \Im\biggl\{\log\zeta_{D,K}(1+\I\tau_{K})-\log\zeta_{C,K}(1+\I\tau_{K}) - l\log(1-q^{-(1+\I\tau_{K})}) \\
	&\qquad\qquad + l\biggl(\log(1-q^{-(1+\I\tau_{K})})+\sum_{q^{\nu}<A_{K+1}}\frac{q^{-\nu(1+\I\tau_{K})}}{\nu}\biggr)\biggr\} \in \biggl[-\frac{6\pi}{160},\frac{6\pi}{160}\biggr]+2\pi\Z,
\end{align*}
by \eqref{eq: even} and \eqref{eq: odd} and since 
\[
	l\abs[4]{\log(1-q^{-(1+\I\tau_{K})})+\sum_{q^{\nu}<A_{K+1}}\frac{q^{-\nu(1+\I\tau_{K})}}{\nu}} \ll (1/q)^{\frac{\log A_{K+1}}{\log q}} < \frac{\pi}{160},
\]
say.
Let now $s\in\Gamma_{0}$, the steepest path through $s_{0}$. Then $\abs{s-(1+\I\tau_{K})} \ll \log_{2}B_{K}/\log B_{K}$, and
\begin{align*}
	\log\zeta_{K}(s) - \log\zeta_{C,K}(s) 	&= \log\zeta_{K}(1+\I\tau_{k})-\log\zeta_{C,K}(1+\I\tau_{K}) + \int_{1+\I\tau_{K}}^{s}\bigl(\log\zeta_{K}(z)-\log\zeta_{C,K}(z)\bigr)'\dif z \\
								&= \log\zeta_{K}(1+\I\tau_{k})-\log\zeta_{C,K}(1+\I\tau_{K}) + O\biggl(\sqrt{\log\tau_{K}}\frac{\log_{2}B_{K}}{\log B_{K}}\biggr),
\end{align*}
so for such $s$, 
\[
	\Im\bigl(\log\zeta_{K}(s)-\log\zeta_{C,K}(s)\bigr) \in \biggl[-\frac{7\pi}{160},\frac{7\pi}{160}\biggr] + 2\pi\Z.
\]

Since $N(x) \ll x$ (which follows for instance from Theorem \ref{th: Diamond's theorem}), there exists some $\tilde{x}_{K}\in (x_{K}-1,x_{K})$ such that 
\[
	\biggl(\tilde{x}_{K} - \frac{1}{\tilde{x}_{K}^{2}}, \tilde{x}_{K} + \frac{1}{\tilde{x}_{K}^{2}}\biggr)\cap\MN = \varnothing,
\]
where $\MN$ is the set of integers generated by $\MP$. We will apply the effective Perron formula \eqref{eq: effective Perron} with $x=\tilde{x}_{K}$ instead of $x_{K}$, in order to avoid a technical difficulty in bounding the error term in this formula. Changing $x_{K}$ to $\tilde{x}_{K}$ is not problematic, since $\sigma\log(x_{K}/\tilde{x}_{K}) \ll 1$, and on the steepest path $\Gamma_{0}$, $\Im(s\log(x_{K}/\tilde{x}_{K})) \ll \tau_{K}/x_{K} < \pi/160$ say. This implies that on the steepest path $\Gamma_{0}$ through $s_{0}$ the argument of the integrand in \eqref{eq: effective Perron} when $x=\tilde{x}_{K}$ 
belongs to
$\pi/2 + [-3\pi/10, 3\pi/10] + 2\pi\Z$ (resp.\ $\in 3\pi/2 + [-3\pi/10, 3\pi/10] + 2\pi\Z$) for infinitely many even (resp.\ odd) $K$. Together with the bound \eqref{eq: bound zeta_{K}-zeta_{C,K}} this 
yields that for infinitely many even and odd $K$ the contribution from $s_{0}$ is the same as in \eqref{eq: contribution s_{0} explicit} (but possibly with a different value for the implicit constant).
One might check that the bound \eqref{eq: bound zeta_{K}-zeta_{C,K}} is also sufficient to treat all the other pieces of the contour, except for the line segment $\Delta_{3}^{+}$. We will replace this segment together with $\Delta_{4}^{+}$ by a different contour, a little more to the left, so that $x^{s}$ can counter the additional factor $\exp(D\sqrt{\log t})$. We will also need a larger value of $T$ to bound the error term in the effective Perron formula,
 so we now take $T=(x_{K})^{4}$ instead of $T=(x_{K})^{2}$. 

Recall that $\Delta_{2}^{+}$ brought us to the point $\sigma'+\I T_{2}^{+}$. First, set $\tilde{\Delta}_{3}^{+}=[\sigma'+\I T_{2}^{+}, \sigma'+2\I\tau]$. This segment contributes $\ll x^{\sigma'}\exp(D\sqrt{\log(2\tau)})$, which is admissible. Next we want to move to the left in such a way that $\int_{s}^{\infty}\eta_{K}$ remains under control. Set $\sigma(t)=1-\log t/\log B_{K}$. If $\sigma\ge\sigma(t)$ and $t\ge2\tau_{K}$, then
\[
	\sum_{k=0}^{K}\int_{s}^{s+1}\bigl(\eta_{k}(z) + \tilde{\eta}_{k}(z) + \xi_{k}(z)\bigr)\dif z \ll \sum_{k=0}^{K}\frac{B_{k}^{1-\sigma(t)}}{t\log B_{k}} \ll \sum_{k=0}^{K}\frac{1}{\log B_{k}} \ll 1,
\]
by the rapid growth of $(B_{k})_{k}$ (see \ref{Property (a)}). Set $\tilde{\Delta}_{4}^{+}=[\sigma(2\tau)+2\I\tau, \sigma'+2\I\tau]$ (note that $\sigma(2\tau)<\sigma'$). The contribution of $\tilde{\Delta}_{4}^{+}$ is bounded by $(x^{\sigma'}/\tau)\exp(D\sqrt{\log(2\tau)})$, which is negligible. Now set $\sigma''=\sigma'-2D/\sqrt{\log x}$. 
We consider two cases.

\noindent\textbf{Case 1:} $\sigma(2\tau) \le \sigma''$, that is, $\alpha > 1/3$. Then we set $\tilde{\Delta}_{5}^{+} = [\sigma(2\tau)+2\I\tau, \sigma(2\tau)+\I x^{4}]$, its contribution is $\ll x^{\sigma''}(\log x)\exp\bigl(D\sqrt{\log x^{4}}\bigr) = x^{\sigma'}\log x$, which is admissible.

\noindent\textbf{Case 2:} $\sigma(2\tau)>\sigma''$, that is, $\alpha \leq 1/3$. Let $T_{3}^{+}$ be the solution of $\sigma(T_{3}^{+}) = \sigma''$, and set $\tilde{\Delta}_{5}^{+}=\{\sigma(t)+\I t: 2\tau\le t\le T_{3}^{+}\} \cup [\sigma''+\I T_{3}^{+}, \sigma''+\I x^{4}]$. This contributes
\[	
	\ll x\int_{2\tau}^{T_{3}^{+}}\exp\biggl(-\frac{\log x}{\log B}\log t + D\sqrt{\log t}\biggr)\frac{\dif t}{t} + x^{\sigma''}(\log x)\exp\bigl(D\sqrt{\log x^{4}}\bigr).
\] 
The first integral is bounded by
\[
	x\int_{2\tau}^{T_{3}^{+}}\exp\biggl(-\frac{\log x}{2\log B}\log t\biggr)\frac{\dif t}{t} \ll x\exp\biggl(-\frac{\log x}{2\log B}\log(2\tau)\biggr) \ll x \exp\biggl(-\frac{c\log x}{2(\log B)^{1-\alpha}}\biggr),
\]
which is again admissible.

\noindent Finally, we set $\tilde{\Delta}_{6}^{+} = [\sigma(2\tau)+\I x^{4}, 3/2 + \I x^{4}]$ or $[\sigma'' + \I x^{4}, 3/2 + \I x^{4}]$, this contributes $x^{3/2 - 4}\exp\bigl(D\sqrt{\log x^{4}}\bigr)$, which is negligible.

Next, we need to estimate the error term in the effective Perron formula
\begin{equation}
\label{eq: Perron remainder}
	x^{3/2}\int_{1^{-}}^{\infty}\frac{1}{u^{3/2}\bigl(1+x^{4}\abs{\log(x/u)}\bigr)}\dif N_{K}(u), \quad x=\tilde{x}_{K}.
\end{equation}
We have that 
\begin{multline*}
	\dif N_{K} = \expm(\dif\Pi_{K}) = \expm\biggl(\sum_{p_{j}^{\nu}<A_{K+1}}\frac{1}{\nu}\delta_{p_{j}^{\nu}} + l \sum_{q^{\nu}<A_{K+1}}\frac{1}{\nu}\delta_{q^{\nu}}\biggr) \\
						+ \expm\biggl(\sum_{p_{j}^{\nu}<A_{K+1}}\frac{1}{\nu}\delta_{p_{j}^{\nu}} + l \sum_{q^{\nu}<A_{K+1}}\frac{1}{\nu}\delta_{q^{\nu}}\biggr) \ast 
								\biggl(\chi_{[A_{K+1},\infty)}\dif\Li + \frac{1}{2}\bigl(\chi_{[A_{K+1},\infty)}\dif\Li\bigr)^{\ast 2} + \dotso\biggr) \eqqcolon \dif m_{1} + \dif m_{2}.
\end{multline*}
Since $\dif m_{1} \le \dif N$, the contribution of $\dif m_{1}$ to \eqref{eq: Perron remainder} is bounded by 
 \[
 	x^{3/2}\sum_{n\in \MN}\frac{1}{n^{3/2}\bigl(1+x^{4}\abs{\log (x/n)}\bigr)} \ll x^{3/2-4} + \sum_{\substack{n\in\MN \\ x/2\le n\le 2x}}\frac{x}{x^{4}\abs{n-x}},
\]
where we used $\abs{\log(x/n)} \gg \abs{n-x}/x$ when $x/2\le n\le 2x$. By the choice of $x=\tilde{x}_{K}$, $\abs{n-x} \ge 1/x^{2}$, so the last sum is bounded by $(1/x)N_{K}(2x)$, which is bounded. The second measure $\dif m_{2}$ has support in $[A_{K+1},\infty)$. Since we may assume that $A_{K+1}>2x_{K}$ by \ref{Property (a)} and since $\dif m_{2} \le \dif N_{K}$, the contribution of $\dif m_{2}$ to \eqref{eq: Perron remainder} is bounded by
\[
	\frac{1}{x^{4}}\int_{A_{K+1}}^{\infty}\frac{\dif N_{K}(u)}{u^{3/2}} \ll \frac{1}{x^{4}}.
\]
(The integral is bounded by $\zeta_{K}(3/2)$, which is bounded independent of $K$.)

To complete the proof, it remains to bound $\rho-\rho_{K}$, where $\rho$ and  $\rho_{K}$ are the asymptotic densities of $N$ and $N_{K}$, respectively.
We have
\begin{align*}
\log \rho - \log \rho_{K} 	&= \int_{1^{-}}^{\infty}\frac{1}{u}\biggl(\sum_{p_{j}^{\nu}\ge A_{K+1}}\frac{1}{\nu}\delta_{p_{j}^{\nu}}(u) 
							+ l\sum_{q^{\nu}\ge A_{K+1}}\frac{1}{\nu}\delta_{q^{\nu}}(u) - \chi_{[A_{K+1},\infty)}\dif\Li(u)\biggr)\\
					&\ll \int_{A_{K+1}}^{\infty}\frac{1}{u^{2}}\abs{\Pi(u)-\Pi(A_{K+1}^{-})-\Li(u)+\Li(A_{K+1})}\dif u\\
					&\ll \int_{A_{K+1}}^{\infty}\frac{\exp\bigl(-c(\log u)^{\alpha}\bigr)}{u}\dif u \ll \exp\bigl(-(c/2)(\log A_{K+1})^{\alpha}\bigr)\le \frac{1}{x_{K}},		
\end{align*}
where we may assume the last bound in view of \ref{Property (a)}. In conclusion, we have that (on some subsequence containing infinitely many even and odd $K$):
\begin{align*}
N(\tilde{x}_{K}) - \rho\tilde{x}_{K} 	&= N_{K}(\tilde{x}_{K})-\rho_{K}\tilde{x}_{K} + (\rho-\rho_{K})\tilde{x}_{K} \\
							&= \Omega_{\pm}\Bigl(\tilde{x}_{K}\exp\bigl(-(c(\alpha+1))^{\frac{1}{\alpha+1}}(\log \tilde{x}_{K}\log_{2}\tilde{x}_{K})^{\frac{\alpha}{\alpha+1}}(1+\dotso)\bigr)\Bigr) + O(1).
\end{align*}

\end{document}